\pgfplotsset{compat=1.12}
\numberwithin{theorem}{section}
\newtheorem{remark}[theorem]{Remark}
\numberwithin{equation}{section}
\numberwithin{table}{section}
\newcommand{\opnorm}{\@ifstar\@opnorms\@opnorm}
\newcommand{\@opnorms}[1]{%
  \left|\mkern-1.5mu\left|\mkern-1.5mu\left|
   #1
  \right|\mkern-1.5mu\right|\mkern-1.5mu\right|
}
\newcommand{\@opnorm}[2][]{%
  \mathopen{#1|\mkern-1.5mu#1|\mkern-1.5mu#1|}
  #2
  \mathclose{#1|\mkern-1.5mu#1|\mkern-1.5mu#1|}
}
\newcommand{\wavenumber}{\kappa}
\newcommand{\spanset}{\operatorname{span}}
\newcommand{\R}{\ \mathbb{R}}
\newcommand{\norm}[1]{{\left\|{#1}\right\|}}
\newcommand{\dist}{\operatorname{dist}} 
\newcommand{\divergence}{\operatorname{div}}
\newcommand{\range}{\operatorname{range}}
\newcommand{\spanlin}{\operatorname{span}}
\newcommand{\cest}{{c_\mathrm{est}(n_t, \varepsilon_{\mathrm{testfail}})}}
\newcommand{\ceff}{{c_\mathrm{eff}(n_t, \varepsilon_{\mathrm{testfail}})}}
\newcommand{\cestshort}{{c_\mathrm{est} } }
\newcommand{\ceffshort}{{c_\mathrm{eff} } }
\newcommand{\algotol}{\normalfont{\texttt{tol}}}
\newcommand{\analyticwidth}{W}
\definecolor{gelb}{RGB}{255,165,0}
\definecolor{oran}{RGB}{255,80,9}
\definecolor{lila}{RGB}{108,0,228}
\definecolor{gruen}{RGB}{0,184,0}
\definecolor{ei}{RGB}{251,255,238}
\definecolor{dunkelgruen}{RGB}{0,120,0}
\definecolor{warm_red}{RGB}{213,16,16}
\definecolor{grau}{RGB}{155,155,140}
\newcommand{\TheTitle}{Randomized Local Model Order Reduction} 
\newcommand{\TheAuthors}{Andreas Buhr and Kathrin Smetana}
\headers{\TheTitle}{\TheAuthors}
\title{{\TheTitle}\thanks{Submitted to the editors July 3, 2017.
\funding{Andreas Buhr was supported by CST Computer Simulation Technology AG.}}}
\author{
  Andreas Buhr\thanks{Institute for Computational and Applied Mathematics,
    University of M\"unster, Einsteinstra\ss e 62, 48149 M\"unster, Germany.
    (\email{andreas@andreasbuhr.de}).}
  \and
  Kathrin Smetana\thanks{Institute for Computational and Applied Mathematics,
    University of M\"unster, Einsteinstra\ss e 62, 48149 M\"unster, Germany; current address: Department of Applied Mathematics, University of Twente, P.O. Box 217, 7500 AE Enschede, The Netherlands.
    (\email{k.smetana@utwente.nl}).}
}
\begin{document}
\maketitle
\begin{abstract}
In this paper we propose local approximation spaces for localized model order reduction procedures such as domain decomposition and multiscale methods. Those spaces are constructed from local solutions of the partial differential equation (PDE) with random boundary conditions, yield an approximation that converges provably at a nearly optimal rate, and can be generated at close to optimal computational complexity. In many localized model order reduction approaches like the generalized finite element method, static condensation procedures, and the multiscale finite element method local approximation spaces can be
constructed by approximating the range of a suitably defined transfer
operator that acts on the space of local solutions of the PDE. Optimal local approximation spaces that yield in general an exponentially convergent approximation are given by the
left singular vectors of this transfer operator [I.~Babu{\v{s}}ka and R.~Lipton 2011, K.~Smetana and A.~T.~Patera 2016]. However, the direct calculation of these singular vectors is computationally very expensive. In this paper, we propose an adaptive randomized algorithm based on methods from randomized linear algebra [N.~Halko et al. 2011], which constructs a local reduced space approximating the range of the transfer operator and thus the optimal local approximation spaces. 
Moreover, the adaptive algorithm relies on a probabilistic a posteriori error estimator for which we prove that it is both efficient and reliable with high probability. Several numerical experiments confirm the theoretical findings. 
\end{abstract}

\begin{keywords}
localized model order reduction, randomized linear algebra, domain decomposition methods, multiscale methods, a priori error bound, a posteriori error estimation
\end{keywords}

\begin{AMS} 65N15, 65N12, 65N55, 65N30, 65C20, 65N25 \end{AMS}

\pagestyle{myheadings}
\thispagestyle{plain}
\markboth{A. BUHR AND K. SMETANA}{Randomized Local Model Order Reduction}

\section{Introduction}
Over the last decades (numerical) simulations based on partial differential equations (PDEs) have considerably gained importance in many (complex) applications. Model reduction is an indispensable tool for the simulation of complex problems where the use of standard methods such as finite elements (FE) and finite volumes is prohibitive. Examples for the latter are tasks where multiple simulation requests or real-time simulation response are desired, the (numerical) treatment of partial differential equations with rapidly varying and strongly heterogeneous coefficients, or simulations on very large or geometrically varying domains. Approaches developed to tackle such (complex) problems are localized model order reduction (localized MOR) approaches that are based on (combinations of) domain decomposition (DD) methods, multiscale methods, and the reduced basis method. This paper proposes local approximation spaces for interfaces or subdomains for local model order reduction procedures for linear, elliptic PDEs that yield a nearly optimally convergent approximation, are computationally inexpensive, and easy to implement.

Recently, local approximation spaces that are optimal in the sense of Kolmogorov \cite{Kol36} and thus minimize the approximation error among all spaces of the same dimension, have been introduced for subdomains $\Omega_{in}$ in \cite{BabLip11} and for interfaces $\Gamma_{in}$ in \cite{SmePat16}. 
To that end, an oversampling subdomain $\Omega$ which contains the target subdomain $\Omega_{in}$ or interface $\Gamma_{in}$ and whose boundary $\partial\Omega$ has a certain distance to the former is considered. Motivated by the fact that the global solution of the PDE satisfies the PDE locally, the space of harmonic functions --- that means all local solutions of the PDE with arbitrary Dirichlet boundary conditions --- is considered on the oversampling subdomain $\Omega$. Note that in general we expect an exponential decay of the higher frequencies of the Dirichlet boundary conditions to $\Omega_{in}$ or $\Gamma_{in}$. %
Therefore, we anticipate that already a local ansatz space of very small size should result in a very accurate approximation of all harmonic functions on $\Omega$. To detect the modes that still persist on $\Omega_{in}$ or $\Gamma_{in}$ a (compact) transfer operator is introduced that maps harmonic functions restricted to $\partial \Omega$ to harmonic functions restricted to $\Omega_{in}$ or $\Gamma_{in}$, respectively. The eigenfunctions of  the ``transfer eigenproblem'' --- the eigenvalue problem for the composition of the transfer operator and its adjoint --- span the optimal space which yields in general a superalgebraically and thus nearly exponentially convergent approximation. Recently, in \cite{TadPat17,Tad16} the results in \cite{BabLip11,SmePat16} have been generalized from linear differential operators whose associated bilinear form is coercive to elliptic, inf-sup stable ones.
However, computing say an FE approximation of these (continuous) optimal spaces by approximating the ``transfer eigenproblem'' requires first to solve the PDE on $\Omega$ for each FE basis function as Dirichlet boundary conditions on $\partial\Omega$ and subsequently to solve a dense eigenproblem of the size of the number of degrees of freedom (DOFs) on $\partial\Omega$. This is prohibitively expensive for many applications, especially for problems in three space dimensions. Applying the implicitly restarted Lanczos method as implemented in ARPACK \cite{Lehoucq1998} requires $\mathcal{O}(n)$ local solutions of the PDE in each iteration, where $n$ denotes the desired size of the local approximation space.

In this paper we propose to build local approximation spaces adaptively from local solutions of the PDE with (Gaussian) random boundary conditions. To give an intuition why randomly generated local approximation spaces may perform very well, we note that if we draw say $n$ independent random vectors which form the coefficients of FE basis functions on $\partial\Omega$ and apply the transfer operator, due to the extremely rapid decay of higher frequencies from $\partial\Omega$, the modes that still persist on $\Omega_{in}$ or $\Gamma_{in}$ will be very close to the optimal modes. 
In detail, based on methods from randomized linear algebra (randomized LA) \cite{halko2011finding,MaRoTy11} we propose an adaptive algorithm which iteratively enhances the reduced space by (local) solutions of the PDE for random boundary conditions and terminates when a probabilistic a posteriori error estimator lies below a given tolerance. We prove that after termination of the adaptive algorithm also the local approximation error is smaller or equal than the given tolerance with very high (given) probability. The respective probabilistic a posteriori estimator in this paper is an extension of a result in \cite{halko2011finding} and we show in addition, as one contribution of this paper, that the effectivity of the a posteriori error estimator can be bounded by a constant with high probability. By using the matrix representation of the transfer operator we exploit results from randomized LA \cite{halko2011finding} to prove that the reduced space produced by the adaptive algorithm yields an approximation that converges at a nearly optimal rate.\footnote{For a different analysis of the algorithm in \cite{halko2011finding,MaRoTy11} we refer to \cite{WitCan15}.} 
Thanks to this excellent approximation capacity the adaptive algorithm proposed in this paper thus only requires very few local solutions of the PDE in addition to the minimal amount required and is therefore computationally very efficient.
As one other (minor) contribution of this paper we extend the results for matrices in \cite{halko2011finding} to finite dimensional linear operators. We consider in this article parameter-independent PDEs. However, the extension to parameterized PDEs can be realized straightforward (see \cite{SmePat16,TadPat17}). Moreover, we assume here that the right-hand side of the PDE is given. If one wishes to construct local spaces for arbitrary right-hand sides prescribing random right-hand sides in the construction of local basis functions, as it is suggested in the context of numerical homogenization in \cite{Owh15,Owh17}, seems to be an attractive option.

Algorithms from randomized LA have got a steadily growing deal of attention in
recent years, especially for very large matrices for instance from problems in large-scale data analysis. Two of the most important benefits of randomization are that they can first result in faster algorithms, either in
worst-case asymptotic theory and/or numerical implementation, and that they allow very often for (novel) tight error bounds \cite{Mah11}. Finally, algorithms in randomized LA can often be designed to exploit modern computational
architectures better than classical numerical methods \cite{Mah11}. For open source software in randomized LA we refer for instance to \cite{VorMar15,Lietal17,EVBK16}. A very popular algorithm in randomized LA is the randomized singular value decomposition (SVD) (see for instance \cite{Sar06,MaRoTy11,RoSzTy09}), which yields a very accurate approximation of the (deterministic) SVD, getting however along with only $\mathcal{O}(n)$ applications of the matrix to random vectors. The randomized SVD can for instance rely on the matrix version of the adaptive algorithm we present in this paper (see \cite{halko2011finding,MaRoTy11}). Moreover, the latter shares a close relationship with methods in randomized LA that are based on the concept of dimension reduction, relying on a random linear map that performs an embedding into a low-dimensional space (see e.g. \cite{FrKaVe04,PRTV00,Sar06,MaRoTy11,RoSzTy09}). Other randomized algorithms employ element-wise sampling of the matrix --- for details we refer to the review in \cite{DriMah16} and the references therein --- or sampling of the columns or rows of the matrix \cite{FrKaVe98,FrKaVe04,DrKaMa06,RudVer07,BoMaDr09,DrMaMu08,DMMW12,DriMah16}. In both cases sampling is based on a certain probability distribution. In case of column sampling a connection to the low-rank approximations we are interested in in this paper can be set up via leverage scores \cite{Mah11,DMMW12}, where this probability distribution is based on (an approximation of) the space spanned by the best rank-$n$ approximation. In general this subcollection of columns or rows can then for instance be used to construct an interpolative decomposition or a CUR decomposition \cite{MaRoTy11,MahDri09,DrKaMa06b,CGMR05,DrMaMu08}. The matrix version of the adaptive algorithm we present in this paper can also be interpreted in the context of linear sketching: Applying the (input) matrix to a random matrix with certain properties results in a so-called ``sketch'' of the input matrix, which is either a smaller or sparser matrix but still represents the essential information of the original matrix (see for instance \cite{DriMah16,Woo14} and references therein); for instance, one can show that under certain conditions on the random matrix, the latter is an approximate isometry \cite{Ver12,Woo14}. The computations can then be performed on the sketch (see e.g. \cite{Woo14,Sar06}). Using structured random matrices such as a subsampled random Fourier or Hadamard transform or the fast Johnson-Lindenstrauss transform \cite{Sar06,WLRT08,Letal07,DMMW12,AilCha06,ClaWoo13} is particularly attractive from a computational viewpoint and yields an improved asymptotic complexity compared to standard methods. Finally, randomization can also be beneficial to obtain high-performant rank-revealing algorithms \cite{Mar15,DueGu17}. 

Using techniques from randomized LA has already been advocated in (localized) model order reduction approaches in other publications. In \cite{WanVou15} Vouvakis et.~al.~de- monstrated the potential of algorithms from randomized LA for domain decomposition methods by using adaptive, randomized techniques to approximate the range of discrete localized Dirichlet-to-Neumann maps in the context of a FETI-2$\lambda$ preconditioner. Regarding multi-scale methods the use of local ansatz spaces spanned by local solutions of the PDE with random boundary conditions is suggested in \cite{CEGL16} for the generalized multiscale finite element method (GMsFEM). Here, the reduced space is selected via an eigenvalue problem restricted to a space consisting of local solutions of the PDE with random boundary conditions. Based on results in \cite{MaRoTy06} an a priori error bound is shown, however, in contrast to our approach, it depends in general on the square root of the number of DOFs on the outer boundary $\partial\Omega$. Moreover, in contrast to \cite{CEGL16} we can formulate our procedure as an approximation of the optimal local approximation spaces suggested in \cite{BabLip11,SmePat16} and are thus able to provide a relation to the optimal rate. Eventually, the method proposed in \cite{CEGL16} either requires the dimension of the reduced space to be known in advance or the use of $\mathcal{O}(n)$ local solutions of the PDE in addition to the minimal amount required.
Finally, we note that in \cite{EftPat13} the local reduced space is constructed from local solutions of the PDE with a linear combination of discrete generalized Legendre polynomials with random coefficients as Dirichlet boundary conditions and in \cite{BEOR15} FE functions on $\partial\Omega$ with random coefficients are considered as boundary conditions. However, neither of the two articles takes advantage of the numerical analysis available for randomized LA techniques. 

The potential of applying algorithms from randomized LA in model order reduction has also already been demonstrated: In \cite{ZahNou16} a method for the construction of preconditioners of parameter-dependent matrices is proposed, which is an interpolation of the matrix inverse and is based on a projection of the identity matrix with respect to the Frobenius norm. Methods from randomized LA are used to compute a statistical estimator for the Frobenius norm. In \cite{HFPSetal14} a randomized SVD is employed to construct a reduced model for electromagnetic scattering problems. Finally, in \cite{AllKut16} the authors suggest to employ a randomized SVD to construct a reduced basis for the approximation of systems of ordinary partial differential equations.

There are many other choices of local approximation spaces in localized MOR approaches.
In DD methods reduced spaces on the interface or in the subdomains are for example chosen as the solutions of (local constrained) eigenvalue problems in component mode synthesis (CMS) \cite{Hur65,BamCra68,Bou92,HetLeh10} or (generalized) harmonic polynomials, plane waves, or local solutions of the PDE accounting for instance for highly heterogeneous coefficients in the Generalized Finite Element Method (GFEM) \cite{BaCaOs94,BaBaOs04,BabMel97,BabLip11}. 
In the Discontinuous Enrichment Method (DEM) \cite{FaHaFr01,FaKaTe10} local FE spaces are enriched by adding analytical or numerical free-space solutions of the homogeneous constant-coefficient counterpart of the considered PDE, while interelement continuity is weakly enforced via Lagrange multipliers. In multiscale methods such as
the multiscale FEM (MsFEM), the variational multiscale method (VMM), or the Local Orthogonal Decomposition Method (LOD) the effect of the fine scale on the coarse scale is either modeled analytically \cite{HFMQ98} or computed numerically by solving the fine-scale equations on local patches with homogeneous Dirichlet boundary conditions \cite{HouWu97,LarMal07,MalPet14}.

The reduced basis (RB) method has been introduced to tackle parameterized PDEs and prepares in a possibly expensive offline stage a low-dimensional reduced space which is specifically tailored to the considered problem in order to realize subsequently fast simulation responses for possibly many different parameters (for on overview see \cite{QuMaNe16,HeRoSt2016,Haa14}). Combinations of the RB method with DD methods have been considered in \cite{MadRon02,MadRon04,IaQuRo12,AnPaQu16,HuKnPa13,EftPat13,Smetana15,IaQuRo16,MaiHaa14,MarRozHaa14,BEOR15}. Here, intra-element RB approximations are for instance coupled by either polynomial Lagrange multipliers \cite{MadRon02,MadRon04}, generalized Legendre polynomials \cite{HuKnPa13}, FE basis functions \cite{IaQuRo16},  or empirical modes generated from local solutions of the PDE  \cite{EftPat13,MarRozHaa14,BEOR15} on the interface. In order to address parameterized multiscale problems the local approximation spaces are for instance spanned by eigenfunctions of an eigenvalue problem on the space of harmonic functions in \cite{EfGaHo13}, generated by solving the global parameterized PDE and restricting the solution to the respective subdomain in \cite{OhlSch15,AHOK12}, or enriched in the online stage by local solutions of the PDE, prescribing the insufficient RB solution as Dirichlet boundary conditions in \cite{OhlSch15,AHOK12}. Apart from that the RB method has also been used in the context of multiscale methods for example in \cite{Ngu08,HeZhZh15,AbdHen14}.

The remainder of this paper is organized as follows. In \cref{sect:optimal local spaces} we present the problem setting and recall the main results for the optimal local approximation spaces introduced in \cite{BabLip11,SmePat16}. The main contributions of this paper are developed in \cref{sect:randomized_la} where we propose an adaptive algorithm that generates local approximation spaces.
Moreover, we prove a priori and a posteriori error bounds and show that the latter is efficient. Finally, we present numerical results in \cref{sect:numerical_experiments} for the Helmholtz equation, stationary heat conduction with high contrast, and linear elasticity to validate the theoretical findings and draw some conclusions in \cref{sect:conclusions}.

\section{Optimal local approximation spaces for localized model order reduction procedures}\label{sect:optimal local spaces}

Let $\Omega_{gl} \subset \mathbb{R}^{d}$, $d=2,3$, be a large, bounded domain with Lipschitz boundary and assume that $\partial \Omega_{gl}= \Sigma_{D} \cup \Sigma_{N}$, where $\Sigma_{D}$ denotes the Dirichlet and $\Sigma_{N}$ the Neumann boundary, respectively. We consider a linear, elliptic PDE on $\Omega_{gl}$ with solution $u_{gl}$, where $u_{gl} = g_{D}$ on $\Sigma_{D}$ and satisfies homogeneous Neumann boundary conditions on $\Sigma_{N}$. Note that we consider here homogeneous Neumann boundary conditions to simplify the notation; non-homogeneous Neumann boundary conditions can be taken into account completely analogous to non-homogeneous Dirichlet boundary conditions. 
To compute an approximation of $u_{gl}$ we employ a domain decomposition or multiscale method combined with model order reduction techniques, which is why we suppose that $\Omega_{gl}$ is decomposed into either overlapping or non-overlapping subdomains. Then, depending on the employed  method, one may either require good  reduced spaces for the subdomains, the interfaces, or both. To fix the setting we thus consider the task to find a good reduced space either on a subdomain $\Omega_{in} \subsetneq \Omega \subset \Omega_{gl}$ with $\dist(\Gamma_{out},\partial \Omega_{in}) \geq \rho > 0$, $\Gamma_{out}:=\partial\Omega\setminus\partial \Omega_{gl}$ or an interface $\Gamma_{in} \subset \partial \Omega_{*}$, where $\Omega_{*} \subsetneq \Omega \subset \Omega_{gl}$ and $\dist(\Gamma_{out},\Gamma_{in})\geq \rho > 0$. Possible geometric configurations are illustrated in \cref{fig:illustration geometry}.

\begin{figure}[t]
\begin{center}
              \includegraphics[width=0.24\textwidth]{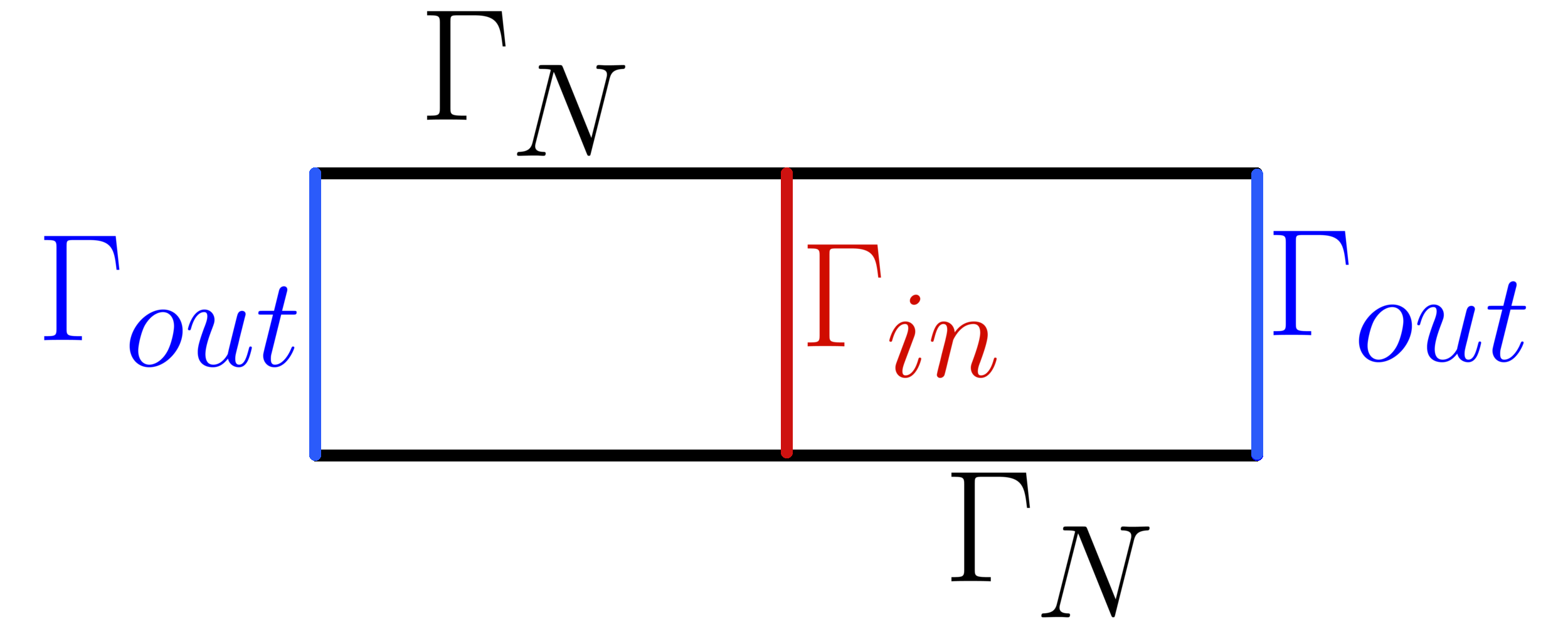} \quad
              \includegraphics[width=0.16\textwidth]{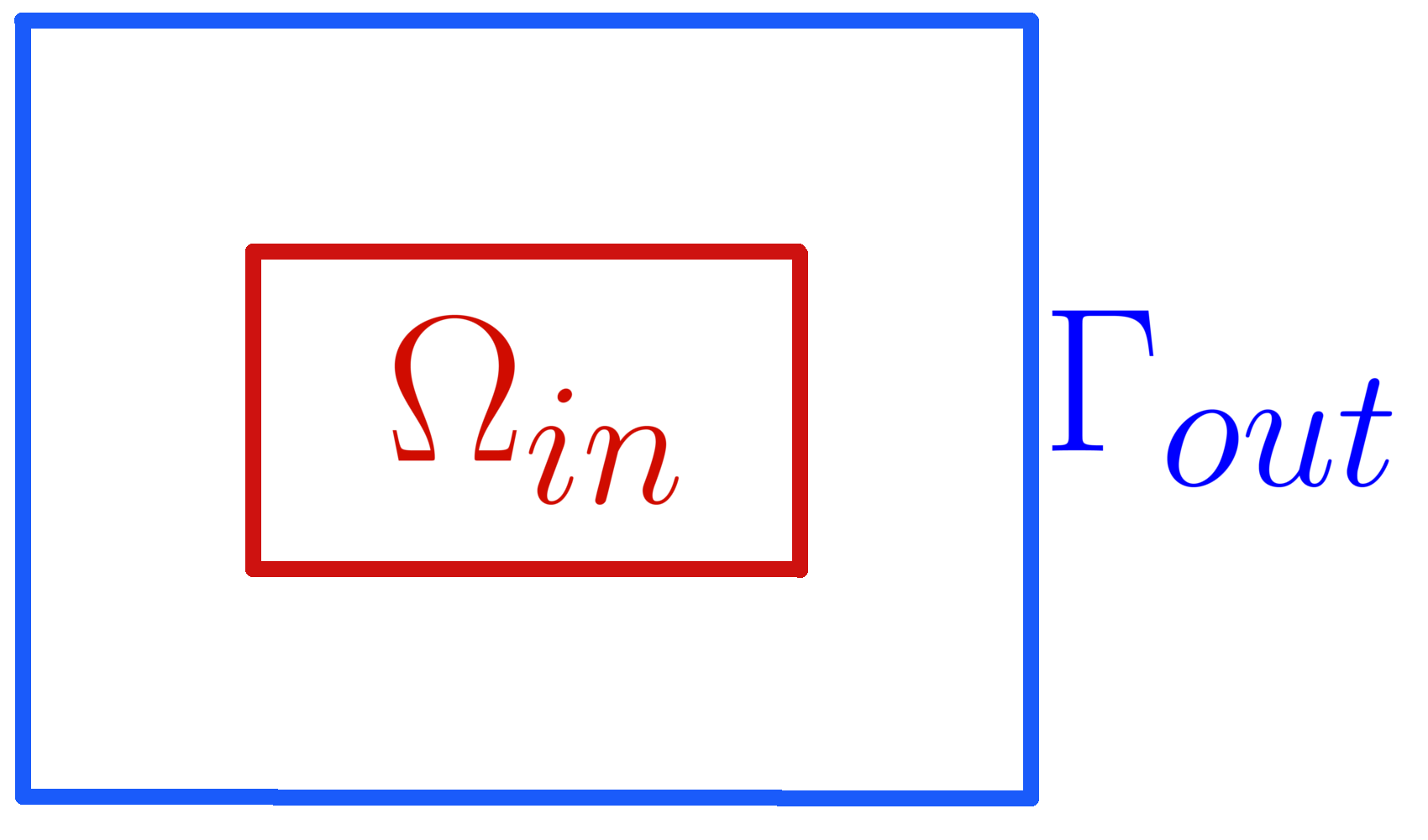}  
              \caption{\footnotesize Illustration of possible decompositions of $\Omega$ with respect to $\Gamma_{in}$ or $\Omega_{in}$.}\label{fig:illustration geometry}
              \end{center}
\vspace{-15pt}
\end{figure}

The challenge in constructing a good reduced space is the fact that although we know that $u_{gl}$ solves the PDE locally on $\Omega$ we do in general \emph{not} know the trace of $u_{gl}$ on $\partial\Omega$ a priori. Therefore, we consider the following problem on $\Omega$: For given $f \in \mathcal{X}_{0}'$ find $u \in \mathcal{X}:=\{ w \in [H^{1}(\Omega)]^{z}\, : \, w = g_{D} \enspace \text{on} \enspace \partial \Omega \cap \Sigma_{D}\}$ such that
\begin{equation}\label{eq:PDE}
\mathcal{A}u = f \quad \text{in} \enspace \mathcal{X}'_{0},
\end{equation}
for arbitrary Dirichlet boundary conditions on $\Gamma_{out}$, where $\mathcal{A}:[H^{1}(\Omega)]^{z} \rightarrow \mathcal{X}_{0}'$, $z=1,2,3$ is a linear, elliptic, and continuous differential operator and $\mathcal{X}_{0}'$ denotes the dual space of 
$
\mathcal{X}_{0}:= \{ v \in [H^{1}(\Omega)]^{z} \, : \, v|_{\Gamma_{out}} = 0, \enspace v|_{\Sigma_{D}\cap \partial \Omega} = 0 \}, \enspace z =1,2,3.
$
The latter is in turn equipped with the full $H^1$-norm.

By exploiting that the global solution $u_{gl}$ solves the PDE \cref{eq:PDE} locally, recently, optimal local approximation spaces have been introduced for subdomains in \cite{BabLip11} and for interfaces in \cite{SmePat16}.\footnote{The key concepts of the construction of optimal local approximation spaces can be nicely illustrated by means of separation of variables 
in a simple example as in \cite[Remark 3.3]{SmePat16}, see the supplementary materials section SM2.} As we aim at providing a good approximation for a whole set of functions, namely all functions that solve the PDE \cref{eq:PDE} locally, the concept of optimality of Kolmogorov \cite{Kol36} is used:

\begin{definition}[Optimal subspace in the sense of Kolmogorov]\label{definition:Kolmogorov}
Let $\mathcal{S},\mathcal{R}$ be Hilbert spaces, $\mathcal{T}: \mathcal{S} \rightarrow \mathcal{R}$ a linear, continuous operator, and $\mathcal{R}^{n}$ an $n$-dimensional subspace of $\mathcal{R}$. Then the Kolmogorov $n$-width of the image of the mapping $\mathcal{T}$ applied to the unit ball of $\mathcal{S}$ in $\mathcal{R}$ is given by
\begin{equation}\label{eq:n-width operator}
d_{n}(\mathcal{T}(\mathcal{S});\mathcal{R}) := \underset{\dim(\mathcal{R}^{n})=n}{\inf_{\mathcal{R}^{n}\subset \mathcal{R}}} \sup_{\psi \in \mathcal{S}} \inf_{\zeta \in \mathcal{R}^{n}} \frac{\|\mathcal{T}\psi - \zeta\|_{\mathcal{R}}}{\|\psi\|_{\mathcal{S}}} = \underset{\dim(\mathcal{R}^{n})=n}{\inf_{\mathcal{R}^{n}\subset \mathcal{R}}} \underset{\|\psi\|_{\mathcal{S}} \leq 1}{\sup_{\psi \in \mathcal{S}}} \inf_{\zeta \in \mathcal{R}^{n}} \|\mathcal{T}\psi - \zeta\|_{\mathcal{R}}.
\end{equation}
A subspace $\mathcal{R}^{n} \subset \mathcal{R}$ of dimension at most $n$ for which holds
\begin{equation*}
d_{n}(\mathcal{T}(\mathcal{S});\mathcal{R}) = \sup_{\psi \in \mathcal{S}} \inf_{\zeta \in \mathcal{R}^{n}} \frac{\|\mathcal{T}\psi - \zeta\|_{\mathcal{R}}}{\|\psi\|_{\mathcal{S}}}
\end{equation*}
is called an optimal subspace for $d_{n}(\mathcal{\mathcal{T}}(\mathcal{S});\mathcal{R})$.
\end{definition}
Being interested in all local solutions of the PDE motivates considering the space of $\mathcal{A}$-harmonic functions on $\Omega$
\begin{equation}\label{eq:space of harmonic functions}
\tilde{\mathcal{H}}:= \{ w \in [H^{1}(\Omega)]^{z}\, : \, \mathcal{A}w = 0 \enspace \text{in} \enspace \mathcal{X}_{0}', \enspace w = 0 \enspace \text{on} \enspace \Sigma_{D}\cap\partial\Omega\}, \enspace z=1,2,3.
\end{equation}
Note that first we restrict ourselves here to the case $f=0$, $g_{D}=0$, and $\partial\Omega_{in}\cap\Sigma_{D}=\emptyset$; the general case will be dealt with at the end of this subsection. 

As in \cite{BabLip11,SmePat16} we may then introduce a transfer operator $\mathcal{T}: \mathcal{S} \rightarrow \mathcal{R}$ for Hilbert spaces $\mathcal{S}$ and $\mathcal{R}$, where $\mathcal{S}= \{ w|_{\Gamma_{out}} \, : \, w \in \tilde{\mathcal{H}}\}$. In order to define appropriate range spaces $\mathcal{R}$ that ensure compactness of $\mathcal{T}$ and allow equipping $\mathcal{R}$ with an energy inner product, we first introduce for a domain $D \subset \Omega$ an orthogonal projection $P_{\ker(\mathcal{A}),D}: [H^{1}(D)]^{z} \rightarrow \ker(\mathcal{A})$ defined as $P_{\ker(\mathcal{A}),D}v := \sum_{k=1}^{\dim (\ker(\mathcal{A}))} (v,\eta_{k})_{quot}\,\eta_{k}$.
Here, $\eta_{k}$ is an orthonormal basis of $\ker(\mathcal{A})$ with respect to the $( \cdot , \cdot)_{quot}$ inner product, where the definition of the latter has to be inferred from the quotient space $\tilde{\mathcal{H}}|_D/\ker(\mathcal{A})$. To illustrate those definitions note that for instance for the Laplacian
$\ker(\mathcal{A})$ would be the constant functions and $( \cdot , \cdot)_{quot}$ would be the $L^{2}$-inner product on $D$. In the case of linear elasticity $\ker(\mathcal{A})$ would equal the six-dimensional space of the rigid body motions and $( \cdot , \cdot)_{quot}$ has to be chosen as the full $H^{1}$-inner product on $D$. We may then define the quotient space 
$
\mathcal{H} := \{ v - P_{\ker(\mathcal{A}),\Omega}(v) , \enspace v \in \tilde{\mathcal{H}}
\}
$
and specify the transfer operator. For $w \in \tilde{\mathcal{H}}$ we define $\mathcal{T}$ for interfaces or subdomains, respectively, as
\begin{equation}\label{eq:transfer_op_gamma}
\mathcal{T}(w|_{\Gamma_{out}})=\left(w-P_{\ker(\mathcal{A}),\Omega}(w)\right)|_{\Gamma_{in}}
\enspace \text{or} \enspace
\mathcal{T}(w|_{\Gamma_{out}})=\left(w - P_{\ker(\mathcal{A}),\Omega_{in}}(w)\right)|_{\Omega_{in}}
\end{equation}
and set
$\mathcal{R}= \{ w|_{\Gamma_{in}} \, : \, w \in \mathcal{H}\}$
or
$\mathcal{R}= \{ \left(w - P_{\ker(\mathcal{A}), \Omega_{in}}\right)|_{\Omega_{in}} \, : \, w \in \tilde{\mathcal{H}}\}$.

Some remarks are in order. In contrast to the definitions in \cite{BabLip11, SmePat16} we do not use a quotient space in the definition of the source space $\mathcal{S}$ as this would either significantly complicate the analysis of the randomized local spaces in \cref{sect:randomized_la} or require the construction of a suitable basis in $\mathcal{S}$ or its discrete counterpart, which can become computationally expensive. %
Thanks to the Caccioppoli inequality (see supplementary materials section SM1), which allows us to bound the energy norm of $\mathcal{A}$-harmonic functions on $\Omega_{in}$ or $\Omega_{*}$, respectively, by their $L^{2}$-norm on $\Omega$, it can then be proved that the operator $\mathcal{T}$ is compact (see \cite{BabLip11,BaHuLi14,SmePat16} for details), where $\Omega_{*}$ has been defined in the second paragraph of this section.\footnote{Note in this context that compactness of $\mathcal{T}$ as defined in \cref{eq:transfer_op_gamma} can be easily inferred from the compactness of the transfer operator acting on the quotient space $\tilde{\mathcal{H}}/\ker(\mathcal{A})$ as considered in \cite{BabLip11,BaHuLi14,SmePat16} by employing that the mapping $\mathcal{K}: \tilde{\mathcal{H}}|_{\Gamma_{out}} \rightarrow (\tilde{\mathcal{H}}/\ker(\mathcal{A}))|_{\Gamma_{out}}$ defined as $\mathcal{K}(v|_{\Gamma_{out}}):=(v - P_{\ker(\mathcal{A}),\Omega})|_{\Gamma_{out}}$ is continuous. }
Let finally $\mathcal{T}^{*}:\mathcal{R} \rightarrow \mathcal{S}$ denote the adjoint operator of $\mathcal{T}$. Then, the operator $\mathcal{T}^{*}\mathcal{T}$ is a compact, self-adjoint, non-negative operator that maps $\mathcal{S}$ into itself, and the Hilbert-Schmidt theorem and Theorem 2.2 in Chapter 4 of \cite{Pinkus85} yield the following result:
\begin{theorem}[Optimal local approximation spaces \cite{BabLip11,SmePat16}]\label{theorem:eigenvalue problem}
The optimal approximation space for $d_{n}(\mathcal{T}(\mathcal{S});\mathcal{R})$ is given by 
\begin{equation}\label{eq:optimal space}
\mathcal{R}^{n}:= \spanset \{\phi_{1}^{sp},...,\phi_{n}^{sp}\}, \qquad \text{where} \enspace \phi_{j}^{sp}=\mathcal{T}\varphi_{j},\quad j=1,...,n,
\end{equation}
and 
$\lambda_j$ are the largest $n$ eigenvalues and $\varphi_j$ the corresponding eigenfunctions
that satisfy the transfer eigenvalue problem: Find $(\varphi_{j},\lambda_{j}) \in (\mathcal{S},\mathbb{R}^{+})$ such that
\begin{align}\label{eq:transfer eigenvalue problem}
(\,\mathcal{T}\varphi_{j}\,,\,\mathcal{T}w\,)_{\mathcal{R}} = \lambda_{j} (\,\varphi_{j}\,,\,w\,)_{\mathcal{S}} \quad \forall w \in \mathcal{S}.
\end{align}
Moreover, the following holds: 
\begin{equation}\label{eq:n-width_equals_eigenvalue}
d_{n}(\mathcal{T}(\mathcal{S});\mathcal{R}) = \sup_{\xi \in \mathcal{S}} \inf_{\zeta \in \mathcal{R}^{n}} \frac{\| \mathcal{T}\xi - \zeta\|_{\mathcal{R}}}{\|\xi\|_{\mathcal{S}}} = \sqrt{\lambda_{n+1}}
\end{equation}
\end{theorem}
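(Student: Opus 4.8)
The plan is to recognize the transfer eigenvalue problem \cref{eq:transfer eigenvalue problem} as the weak form of the eigenproblem for $\mathcal{T}^{*}\mathcal{T}$ and then to run the classical singular-value/minimax argument that underlies Theorem 2.2 in Chapter 4 of \cite{Pinkus85}. First I would observe that for every $w \in \mathcal{S}$ we have $(\mathcal{T}\varphi_{j}, \mathcal{T}w)_{\mathcal{R}} = (\mathcal{T}^{*}\mathcal{T}\varphi_{j}, w)_{\mathcal{S}}$, so \cref{eq:transfer eigenvalue problem} is equivalent to $\mathcal{T}^{*}\mathcal{T}\varphi_{j} = \lambda_{j}\varphi_{j}$. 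Since $\mathcal{T}^{*}\mathcal{T}$ is compact, self-adjoint and non-negative on $\mathcal{S}$, the Hilbert--Schmidt theorem supplies a countable orthonormal system $\{\varphi_{j}\}$ of eigenfunctions with eigenvalues $\lambda_{1} \geq \lambda_{2} \geq \cdots \geq 0$ accumulating only at $0$, whose span together with $\ker(\mathcal{T}^{*}\mathcal{T}) = \ker(\mathcal{T})$ is dense in $\mathcal{S}$. I would then record the two facts that make the argument work: the images $\phi_{j}^{sp} = \mathcal{T}\varphi_{j}$ satisfy $(\mathcal{T}\varphi_{i},\mathcal{T}\varphi_{j})_{\mathcal{R}} = \lambda_{j}\delta_{ij}$, so that $\{\hat{\phi}_{j} := \mathcal{T}\varphi_{j}/\sqrt{\lambda_{j}}\}_{\lambda_{j}>0}$ is orthonormal in $\mathcal{R}$ and the $\sqrt{\lambda_{j}}$ are the singular values of $\mathcal{T}$.

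For the upper bound I would take $\mathcal{R}^{n} = \spanset\{\phi_{1}^{sp},\dots,\phi_{n}^{sp}\}$ as the trial space. Expanding an arbitrary $\psi \in \mathcal{S}$ as $\psi = \psi_{0} + \sum_{j} c_{j}\varphi_{j}$ with $\psi_{0} \in \ker(\mathcal{T})$ gives $\mathcal{T}\psi = \sum_{j} c_{j}\sqrt{\lambda_{j}}\,\hat{\phi}_{j}$ and $\|\psi\|_{\mathcal{S}}^{2} \geq \sum_{j} c_{j}^{2}$. Choosing $\zeta = \sum_{j\leq n} c_{j}\sqrt{\lambda_{j}}\,\hat{\phi}_{j} \in \mathcal{R}^{n}$, i.e.\ the $\mathcal{R}$-orthogonal projection of $\mathcal{T}\psi$ onto $\mathcal{R}^{n}$, yields $\|\mathcal{T}\psi - \zeta\|_{\mathcal{R}}^{2} = \sum_{j>n} c_{j}^{2}\lambda_{j} \leq \lambda_{n+1}\sum_{j>n}c_{j}^{2} \leq \lambda_{n+1}\|\psi\|_{\mathcal{S}}^{2}$, using monotonicity of the eigenvalues. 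Taking the supremum over $\|\psi\|_{\mathcal{S}}\leq 1$ shows $d_{n}(\mathcal{T}(\mathcal{S});\mathcal{R}) \leq \sqrt{\lambda_{n+1}}$ and simultaneously exhibits $\mathcal{R}^{n}$ as an admissible candidate for the optimal space.

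For the matching lower bound I would fix any $\mathcal{R}^{n} \subset \mathcal{R}$ with $\dim \mathcal{R}^{n} = n$ and consider the $(n+1)$-dimensional subspace $V := \spanset\{\varphi_{1},\dots,\varphi_{n+1}\} \subset \mathcal{S}$. The linear map $\psi \mapsto P_{\mathcal{R}^{n}}\mathcal{T}\psi$ sends the $(n+1)$-dimensional space $V$ into the $n$-dimensional space $\mathcal{R}^{n}$ and hence has a nontrivial kernel: there is $\psi^{*} = \sum_{j\leq n+1} c_{j}\varphi_{j} \in V$ with $\|\psi^{*}\|_{\mathcal{S}}=1$ and $P_{\mathcal{R}^{n}}\mathcal{T}\psi^{*} = 0$. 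For this $\psi^{*}$ the best approximation error is $\inf_{\zeta\in\mathcal{R}^{n}}\|\mathcal{T}\psi^{*} - \zeta\|_{\mathcal{R}} = \|\mathcal{T}\psi^{*}\|_{\mathcal{R}}$, and $\|\mathcal{T}\psi^{*}\|_{\mathcal{R}}^{2} = \sum_{j\leq n+1}c_{j}^{2}\lambda_{j} \geq \lambda_{n+1}\sum_{j\leq n+1}c_{j}^{2} = \lambda_{n+1}$. Thus the supremum over the unit ball of $\mathcal{S}$ is at least $\sqrt{\lambda_{n+1}}$ for every $\mathcal{R}^{n}$, giving $d_{n}(\mathcal{T}(\mathcal{S});\mathcal{R}) \geq \sqrt{\lambda_{n+1}}$. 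Combining the two bounds proves \cref{eq:n-width_equals_eigenvalue} and shows that the supremum is attained on $\mathcal{R}^{n} = \spanset\{\phi_{1}^{sp},\dots,\phi_{n}^{sp}\}$, so that this space is optimal.

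The genuinely load-bearing step is this lower bound, where the dimension count forces the existence of a $\psi^{*}\in V$ whose image is $\mathcal{R}$-orthogonal to the competing space; this is exactly the minimax mechanism encapsulated in the cited Pinkus theorem, and it is where I would concentrate the care. The only additional bookkeeping I expect to require attention is the treatment of $\ker(\mathcal{T})$ in the expansion of $\psi$ and the degenerate case $\mathrm{rank}(\mathcal{T})\leq n$, in which $\lambda_{n+1}=0$ and both bounds collapse to $d_{n}=0$. Finally, one should keep in mind that the quotient construction via $P_{\ker(\mathcal{A}),\cdot}$ has already been folded into the definitions of $\mathcal{T}$, $\mathcal{R}$ and $\mathcal{S}$, so that the abstract compact-operator argument applies verbatim once compactness of $\mathcal{T}$ is granted.
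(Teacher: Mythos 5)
Your proposal is correct and follows essentially the same route as the paper, which does not spell out a proof but simply invokes compactness, self-adjointness, and non-negativity of $\mathcal{T}^{*}\mathcal{T}$ together with the Hilbert--Schmidt theorem and Theorem 2.2 in Chapter 4 of \cite{Pinkus85}. What you have done is correctly unpack that cited black box: the upper bound via the $\mathcal{R}$-orthogonal projection onto $\spanset\{\phi_{1}^{sp},\dots,\phi_{n}^{sp}\}$ and the lower bound via the dimension-count/minimax argument on $\spanset\{\varphi_{1},\dots,\varphi_{n+1}\}$ are exactly the standard mechanism behind the Pinkus theorem, and your handling of $\ker(\mathcal{T})$ and the degenerate case $\lambda_{n+1}=0$ is sound.
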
 

If we have $\partial\Omega_{in}\cap\Sigma_{D}\neq\emptyset$ we do not subtract the orthogonal projection on $\ker(\mathcal{A})$ either in the definition of the transfer operator in \cref{eq:transfer_op_gamma} or the definition of the range space for subdomains. Next, for $f\neq 0$ but still $g_{D}=0$ we solve the problem: Find $u^{f} \in \mathcal{X}_{0}$ such that 
$
\mathcal{A}u^{f} = f \quad \text{in} \enspace \mathcal{X}_{0}',
$
and augment the space $\mathcal{R}^{n}$ either with $u^{f}|_{\Omega_{in}}$ or $u^{f}|_{\Gamma_{in}}$. To take non-homogeneous Dirichlet boundary conditions into account we consider the problem: Find $u^{g_{D}} \in \{ w \in [H^{1}(\Omega)]^{z}\, : \, w = g_{D} \enspace \text{on} \enspace \partial \Omega \cap \Sigma_{D}, \enspace w = 0 \enspace \text{on} \enspace\Gamma_{out}\}$, $z=1,2,3$, such that 
$
\mathcal{A}u^{g_{D}} = 0 \quad \text{in} \enspace \mathcal{X}_{0}'.
$
Finally, we may then define the optimal local approximation space for subdomains as 
\begin{equation}\label{eq:optimal space subdomain}
\mathcal{R}_{data,\ker}^{n}:=\spanlin \{ \phi_{1}^{sp},...,\phi_{n}^{sp}, u^{f}|_{\Omega_{in}}, u^{g_{D}}|_{\Omega_{in}},\eta_{1}|_{\Omega_{in}},\hdots,\eta_{\dim(\ker(\mathcal{A}))}|_{\Omega_{in}}\}
\end{equation}
and for interfaces as
\begin{equation}\label{eq:optimal space interface}
\mathcal{R}_{data,\ker}^{n}:=\spanlin \{ \phi_{1}^{sp},...,\phi_{n}^{sp}, u^{f}|_{\Gamma_{in}},u^{g_{D}}|_{\Gamma_{in}},\eta_{1}|_{\Gamma_{in}},\hdots,\eta_{\dim(\ker(\mathcal{A}))}|_{\Gamma_{in}}\},
\end{equation}
respectively, where $\{ \eta_{1},\hdots,\eta_{\dim(\ker(\mathcal{A}))}\}$ denotes a basis for $\ker(\mathcal{A})$. In case there holds $\partial\Omega_{in}\cap\Sigma_{D}\neq\emptyset$ we do not augment the space $\mathcal{R}^{n}$ with a basis of $\ker(\mathcal{A})$.

\subsection{Approximation of the transfer eigenvalue problem with Finite Elements; matrix form of the transfer operator}

In this subsection we show how an approximation of the continuous optimal local spaces $\mathcal{R}_{data,\ker}^{n}$ can be computed with the FE method and introduce the notation in this discrete setting required for the remainder of this paper. 

To that end, we introduce a partition of $\Omega$ such that $\Gamma_{in}$ or $\partial \Omega_{in}$ do not intersect any element of that partition. In addition, we introduce an associated conforming FE space $X \subset [H^{1}(\Omega)]^{z}$, $z=1,2,3$ with $\dim(X) = N$, a nodal basis $\{\psi_{1},...,\psi_{N}\}$ of $X$, the FE source space $S:=\{ v|_{\Gamma_{out}}\, : \, v \in X\}$ of dimension $N_{S}$, and the FE range space $R:=\{ (v - P_{ker(\mathcal{A}),\Omega}(v))|_{\Gamma_{in}}\, : \, v \in X\}$ or $R:=\{ (v - P_{ker(\mathcal{A}),\Omega_{in}})|_{\Omega_{in}}\, : \, v \in X\}$ with $\dim(R)=N_{R}$. Next, we define the space of discrete $A$-harmonic functions
\begin{equation}
\tilde{H} := \{ w \in X\, : \, Aw = 0 \enspace \text{in} \enspace X_{0}', \enspace w = 0 \enspace \text{on} \enspace \Sigma_{D}\cap\partial\Omega\},
\end{equation}
where $A:X \rightarrow X_{0}'$ is the discrete counterpart of $\mathcal{A}$ and $X_{0}'$ denotes the dual space of $X_{0}:= \{ v \in X \, : \, v|_{\Gamma_{out}} = 0, \enspace v|_{\Sigma_{D}\cap \partial \Omega} = 0 \}.$ We may then define the discrete transfer operator $T:S \rightarrow R$ for $w \in \tilde{H}$ as\footnote{Note that in the continuous setting the range space $\mathcal{R}$ is a subspace of the space $\widehat{\mathcal{R}}:=\{(v - P_{ker(\mathcal{A}),\Omega}(v))|_{\Gamma_{in}}, \enspace v \in [H^{1}(\Omega)]^{z}\}$, $z=1,2,3$ for interfaces and $\widehat{\mathcal{R}}:=\{(v - P_{ker(\mathcal{A}),\Omega_{in}}(v))|_{\Omega_{in}}, \enspace v \in [H^{1}(\Omega)]^{z}\}$, $z=1,2,3$ for subdomains. It can then be easily shown for the corresponding transfer operator $\widehat{\mathcal{T}}: \widehat{\mathcal{R}} \rightarrow\mathcal{S}$ which is defined identically as in \cref{eq:transfer_op_gamma} that there holds $d_{n}(\mathcal{T}(\mathcal{S});\mathcal{R}) = d_{n}(\widehat{\mathcal{T}}(\mathcal{S});\widehat{\mathcal{R}})$ and that the associated optimal approximation spaces are the same. This justifies the usage of the discrete range space as defined above.}  
\begin{equation}\label{eq: discrete transfer operator}
T(w|_{\Gamma_{out}})=\left(w-P_{\ker(\mathcal{A}),\Omega}(w)\right)|_{\Gamma_{in}} \enspace \text{or} \enspace
T(w|_{\Gamma_{out}})=\left(w - P_{\ker(\mathcal{A}),\Omega_{in}}(w)\right)|_{\Omega_{in}}.
\end{equation}

In order to define a matrix form of the transfer operator we introduce DOF mappings $\underline{B}_{S\rightarrow X} \in \mathbb{R}^{N \times N_{S}}$ and $\underline{B}_{X\rightarrow R} \in \mathbb{R}^{N_{R}\times N}$ that map the DOFs of $S$ to the DOFs of $X$ and the DOFs of $X$ to the DOFs of $R$, respectively. Moreover, we introduce the stiffness matrix $\underline{A}$ associated with the discrete operator $A$, where we assume that in the rows associated with the Dirichlet DOFs the non-diagonal entries are zero and the diagonal entries equal one. Note that in order to make the distinction between elements of the Hilbert spaces $S$ and $R$ and their coordinate representation in $\R^{N_S}$ and $\R^{N_R}$
explicit, we mark all coordinate vectors and matrices with an underline. By writing functions $\zeta \in S$ as $\zeta = \sum_{i=1}^{N_{S}}\underline{\zeta}_{i} \psi_{i}|_{\Gamma_{out}}$ 
and defining $\underline K_{\Omega_{in}}$ as the matrix of the orthogonal projection on $\ker(\mathcal{A})$ on $\Omega_{in}$,
we obtain the following matrix representation $\underline{T} \in \mathbb{R}^{N_{R} \times N_{S}}$ of the transfer operator for domains
\begin{eqnarray}\label{eq:matrix_form_transfer_operator}
\underline{T}\,\underline{\zeta} = 
\left(1 - \underline K_{\Omega_{in}} \right)
\underline{B}_{X\rightarrow R} \, \underline{A}^{-1} \underline{B}_{S\rightarrow X} \, \underline{\zeta}.
\end{eqnarray}
For interfaces, the projection on the quotient space is done before the index mapping.
There, with $\underline K_{\Omega}$ as the matrix of the orthogonal projection on $\ker(\mathcal{A})$ on $\Omega$,
the matrix representation of the transfer operator is given by
\begin{eqnarray}\label{eq:matrix_form_transfer_operator2}
\underline{T}\,\underline{\zeta} = 
\underline{B}_{X\rightarrow R} \, 
\left(1 - \underline K_{\Omega} \right) \,
\underline{A}^{-1} \underline{B}_{S\rightarrow X} \, \underline{\zeta}.
\end{eqnarray}
Finally, we denote by $\underline M_S$ the inner product matrix of $S$ and by $\underline M_R$ the inner product matrix of $R$. Then, the FE approximation of the transfer eigenvalue problem reads as follows: Find the eigenvectors $\underline{\zeta}_{j} \in \mathbb{R}^{N_{S}}$ and the eigenvalues $\lambda_{j} \in \mathbb{R}^{+}_0$ such that
\begin{equation}\label{eq:matrix_version_transfer_eigenvalue_problem}
\underline{T}^{t}\underline M_{R}\underline{T} \,\underline{\zeta}_{j} = \lambda_{j}\, \underline{M}_{S} \,\underline{\zeta}_{j}.
\end{equation}
The coefficients of the FE approximation of the basis functions $\{\phi_{1}^{sp},...,\phi_{n}^{sp}\}$ of the optimal local approximation space 
\begin{equation}\label{eq:discrete optimal space}
R^{n}:=\spanlin \{\phi_{1}^{sp},...,\phi_{n}^{sp}\}
\end{equation}
are then given by
$
\underline{\phi}_{j}^{sp} = \underline{T}\,\underline{\zeta}_{j},$ $j =1,\hdots,n.
$
Adding the representation of the right-hand side, the boundary conditions, and a basis of $\ker(\mathcal{A})$ yields the optimal space $R^{n}_{data,\ker}$. 
 
Note that we may also perform a singular value decomposition of the operator $T$, which reads 
\begin{equation}
T\zeta=\sum^{\min\{N_{S},N_{R}\}}_{j}\sigma_{j}\hat{\phi}_{j}^{sp}(\chi_{j},\zeta)_{S} \quad \text{for} \enspace \zeta \in S,
\end{equation}
with orthonormal bases $\hat{\phi}_{j}^{sp} \in R$, $\chi_{j} \in S$, and singular values $\sigma_{j} \in \mathbb{R}^{+}_{0}$, and define $R^{n}:=\spanlin \{\hat{\phi}_{1}^{sp},...,\hat{\phi}_{n}^{sp}\}$. Up to numerical errors this definition is equivalent to the definition in \cref{eq:discrete optimal space} and there holds $\sigma_{j} = \sqrt{\lambda_{j}}$, $j=1,\hdots, \min\{N_{S},N_{R}\}$, where $\lambda_{j}$ are the eigenvalues of the discrete transfer eigenproblem \cref{eq:matrix_version_transfer_eigenvalue_problem}. Note however that there holds $(\phi_{i}^{sp},\phi_{j}^{sp})_{R} = \delta_{ij} \lambda_{j}$ in contrast to $(\hat{\phi}_{i}^{sp},\hat{\phi}_{j}^{sp})_{R} = \delta_{ij}$.

Finally, we introduce Ritz isomorphisms $D_S: S \rightarrow \mathbb{R}^{N_{S}}$ and $D_R: R \rightarrow \mathbb{R}^{N_{R}}$ which map elements from $S$ or $R$ to a vector containing their FE coefficients in $\R^{N_S}$ or $\R^{N_R}$, respectively. For instance, $D_{S}$ maps a function $\xi = \sum_{i=1}^{N_{S}}\underline{\xi}_{i} \psi_{i}|_{\Gamma_{out}} \in S$ to $\underline{\xi} \in \mathbb{R}^{N_{S}}$. As a result we have the matrix of the transfer operator as $\underline{T} = D_{R} T D_{S}^{-1}$.

\section{Approximating the range of an operator by random sampling}
\label{sect:randomized_la}
In this section we present and analyze an algorithm which constructs a reduced space $R^{n}$ that approximates the range of a finite dimensional linear operator $T$ of rank $N_T$ by iteratively enhancing the reduced space with applications of $T$ to a random function. Although having the transfer operator \cref{eq: discrete transfer operator} in mind we consider the general setting of a finite dimensional linear operator mapping between two finite dimensional Hilbert spaces $S$ and $R$. Note that in the context of localized MOR for inhomogeneous problems it is necessary to enhance $R^{n}$ by the representation of the right-hand side and the boundary conditions.

The algorithm and parts of its analysis are an extension of results in randomized LA \cite{halko2011finding} to the setting of finite dimensional linear operators. In detail we first present an adaptive range finder algorithm in \cref{sec:adaptive range finder algorithm} and discuss its computational complexity. This algorithm relies on a probabilistic a posteriori bound, which is a extension of a result in \cite{halko2011finding} and for which we prove as one new contribution its efficiency in \cref{sec:convergence_criterion}. Starting from results in randomized LA \cite{halko2011finding} we prove in \cref{sec:convergence_rate} that the reduced space $R^{n}$ generated by the algorithm as presented in \cref{sec:adaptive range finder algorithm} yields an approximation that converges with a nearly optimal rate.

\subsection{An adaptive randomized range finder algorithm}\label{sec:adaptive range finder algorithm}
\begin{algorithm2e}
\DontPrintSemicolon
\SetAlgoVlined
\SetKwFunction{AdaptiveRandomizedRangeApproximation}{AdaptiveRandomizedRangeApproximation}
\SetKwInOut{Input}{Input}
\SetKwInOut{Output}{Output}
\Fn{\AdaptiveRandomizedRangeApproximation{$T, \algotol, n_t, \varepsilon_\mathrm{algofail}$}}{
  \Input{Operator $T$,\\target accuracy $\algotol$,\\number of test vectors $n_t$,\\maximum failure probability $\varepsilon_\mathrm{algofail}$}
  \Output{space $R^{n}$ with property $P\left(\norm{T - P_{R^{n}}T} \leq \algotol \right) > \left(1 - \varepsilon_\mathrm{algofail}\right)$}
  \tcc{initialize basis} 
  $B \leftarrow \emptyset$ \label{algo:line:basis_init}\;
  \tcc{initialize test vectors} 
  $M \leftarrow \{T D_S^{-1} \underline r_1, \ \dots, \  T D_S^{-1} \underline r_{n_t} \} $ \label{algo:line:test_vec_init}\;
  \tcc{determine error estimator factor}
  $\varepsilon_\mathrm{testfail} \leftarrow \varepsilon_\mathrm{algofail} / N_T$ \label{algo:line:varepsilon_init} \;
  $c_\mathrm{est} \leftarrow \left[ \sqrt{2 \lambda^{\underline M_S}_{min}} \ \mathrm{erf}^{-1} \left( \sqrt[n_t]{\varepsilon_{\mathrm{testfail}} } \right)
\right]^{-1}$ \label{algo:line:constant_init} \;
  \tcc{basis generation loop}
  \While{$\left( \max_{t \in M} \norm{t}_R \right) \cdot c_\mathrm{est} > \algotol$ \label{algo:line:convergence}}{
    $B \leftarrow B \cup (T D_S^{-1} \underline r) $ \label{algo:line:basis_extension}\;
    $B \leftarrow \mathrm{orthonormalize}(B)$ \label{algo:line:basis_extension2}\;
    \tcc{orthogonalize test vectors to $\mathrm{span}(B)$}
    $M \leftarrow \left\{t - P_{\mathrm{span}(B)} t \ \Big| \ t \in M  \right\}$ \label{algo:line:test_vector_update}\;
  }
  \Return $R^{n} = \mathrm{span}(B)$\;
}
\caption{Adaptive Randomized Range Approximation}
\label{algo:adaptive_range_approximation}
\end{algorithm2e}
We propose an adaptive randomized range approximation algorithm that constructs an approximation space $R^n$
by iteratively extending its basis until a convergence criterion is satisfied.
In each iteration, the basis is extended by the operator $T$ applied to
a random function.

The full algorithm is given in Algorithm \ref{algo:adaptive_range_approximation} and has four input parameters, starting with the operator $T$, whose range should be approximated. This could be represented by a matrix, 
but in the intended context it is usually an implicitly defined operator
which is computationally expensive to evaluate.
Only the evaluation of the operator on a vector is required.
The second input parameter is the target accuracy $\algotol$ such that $\norm{T - P_{R^n}T} \leq \algotol$.
The third input parameter is the number of test vectors $n_t$
to be used in the a posteriori error estimator which we will discuss shortly. 
A typical $n_t$ could be 5, 10, or 20.
The fourth input parameter is the maximum failure probability $\varepsilon_\mathrm{algofail}$ and
the algorithm returns a space which has the required approximation
properties with a probability greater than $1- \varepsilon_\mathrm{algofail}$.

The basis $B$ of $R^n$ is initialized as empty in line \ref{algo:line:basis_init},
test vectors are initialized as the operator applied to random normal vectors in 
line \ref{algo:line:test_vec_init}.
Recall that $T D_S^{-1} \underline r$ is the operator $T$ applied to a random normal vector.
We use the term ``random normal vector'' to denote a vector whose entries are independent and identically distributed random variables
with normal distribution. The main loop of the algorithm is terminated when the following a posteriori norm estimator
applied to $T - P_{R^n}T$ is smaller than $\algotol$.
\begin{definition}[A probabilistic a posteriori norm estimator]\label{def:norm estimator}
To estimate the operator norm of an operator $O: S \rightarrow R$ of rank $N_O$, we define the a posteriori norm estimator $\Delta(O, n_t, \varepsilon_\mathrm{testfail}) $ for $n_t$ test vectors as
\begin{equation}\label{eq:a posteriori error estimator}
\Delta(O, n_t, \varepsilon_\mathrm{testfail}) 
:= \cest \max_{i \in 1, \dots, n_t} \norm{ O \ D_S^{-1} \ \underline r_i }_R.
\end{equation}
Here, $\cest$ is defined as
$
\cest := 
1 / [ \sqrt{2 \lambda^{\underline M_S}_{min}} \ \mathrm{erf}^{-1} ( \sqrt[n_t]{\varepsilon_{\mathrm{testfail}} } ) ]
,
$
$\underline r_i$ are random normal vectors,
and $\lambda^{\underline M_S}_{min}$ is the smallest eigenvalue of the matrix of the inner product in $S$.
\label{definition:test}
\end{definition}
This error estimator $\Delta(O, n_t, \varepsilon_\mathrm{testfail})$ is analyzed in detail in \cref{sec:convergence_criterion}.
The constant $\cest$, which appears in the error estimator,
is calculated in line \ref{algo:line:varepsilon_init} and \ref{algo:line:constant_init} using
$N_T$ --- the rank of operator $T$. In practice $N_{T}$ is unknown
and an upper bound for $N_T$ such as $\min(N_S, N_R)$ can be used instead.
In line \ref{algo:line:convergence} the algorithm assesses if the convergence criterion is already satisfied. Note that the term
$
\left( \max_{t \in M} \norm{t}_R \right) \cdot \cest
$
is the norm estimator \cref{eq:a posteriori error estimator} applied to $T- P_{R^n}T$.
The test vectors are reused for all iterations.
The main loop of the algorithm consists of two parts.
First, the basis is extended in line \ref{algo:line:basis_extension}
and \ref{algo:line:basis_extension2} by applying the operator $T$ to a random normal vector and adding the result to the basis $B$. Then the basis $B$ is orthonormalized. The resulting basis vectors are denoted by $\phi_i^{rnd}$.
We emphasize that the orthonormalization is numerically challenging, 
as the basis functions are nearly linear dependent when
$R^n$ is already a good approximation of the range of $T$.
In the numerical experiments we use the numerically stable Gram-Schmidt with re-iteration from \cite{BEOR14a},
which always succeeded to obtain an orthogonal set of vectors.
Instead of the Gram-Schmidt orthonormalization, one could apply an SVD to the matrix that contains the vectors in $B$ as columns after termination of Algorithm \ref{algo:adaptive_range_approximation} to remove linear dependent vectors.
In the case of almost linear dependent vectors, this could lead to slightly smaller basis sizes.
Note that as we suggest to only remove the linear dependent vectors with the SVD the accuracy of the approximation is not compromised.
Finally, the test vectors are updated in 
line \ref{algo:line:test_vector_update}.

In Algorithm \ref{algo:adaptive_range_approximation},
the smallest eigenvalue of matrix of the inner product in $S$, 
$\lambda^{\underline M_S}_{min}$, or at least a lower bound for it,
is required.
The orthonormalization of $B$ in line \ref{algo:line:basis_extension2}
and the update of test vectors in 
line \ref{algo:line:test_vector_update} use the inner product in
$R$. These aspects should be taken into account when choosing the inner products in $S$ and $R$.

The presented algorithm has good performance properties for operators $T$ which
are expensive to evaluate. To produce the space $R^n$ of dimension $n$, 
it evaluates the operator $n$ times to generate the basis and $n_t$
times to generate the test vectors, so in total $n + n_t$ times.
In contrast, direct calculation of the optimal space, solving the eigenvalue 
problem \cref{eq:transfer eigenvalue problem}, would require
$N_S$ evaluations of the operator and solving a dense eigenproblem
of dimension $N_S \times N_S$.
Exploiting the low rank structure of $T$, one
could calculate the eigenvectors of $T^*T$ using
a Lanczos type algorithm as implemented in ARPACK \cite{Lehoucq1998}, but this would require $\mathcal{O}(n)$ evaluations of
$T$ and $T^*$ in every iteration, 
potentially summing up to much more than $n + n_t$ evaluations,
where the number of iterations is often not foreseeable.

\subsection{A probabilistic a priori error bound}
\label{sec:convergence_rate}
In this subsection we analyze the convergence behavior
of Algorithm \ref{algo:adaptive_range_approximation}. In detail, we derive a probabilistic a priori error bound for the projection error $\norm{T-P_{R^{n}}T}$ and its expected value. Recalling that the optimal convergence rate achieved by the optimal spaces from \cref{theorem:eigenvalue problem} is $\sqrt{\lambda_{n+1}}=\sigma_{n+1}$ we show that the reduced spaces constructed with Algorithm \ref{algo:adaptive_range_approximation} yield an approximation that converges with a nearly optimal rate: 
\begin{proposition}\label{prop:a priori}
\label{thm:convergence_rate_main}
Let $\lambda^{\underline M_S}_{max}$,
$\lambda^{\underline M_S}_{min}$,
$\lambda^{\underline M_R}_{max}$,
and
$\lambda^{\underline M_R}_{min}$ denote the largest and smallest eigenvalues
of the inner product matrices
$\underline M_S$ and
$\underline M_R$, respectively and let $R^{n}$ be the outcome of Algorithm \ref{algo:adaptive_range_approximation}. Then, for $n\geq 4$ there holds
\begin{equation}\label{eq:a priori mean}
\mathbb{E}
\norm{T - P_{R^{n}} T}
\leq
\sqrt{\frac{\lambda^{\underline{M}_{R}}_{max}}{\lambda^{\underline{M}_{R}}_{min}}
\frac{\lambda^{\underline{M}_{S}}_{max}}{\lambda^{\underline{M}_{S}}_{min}}}
\min_{\overset{k+p=n}{k\geq 2, p\geq 2}}
\left[
\left( 1 + \sqrt{  \frac{k}{p-1}  } \right) \sigma_{k+1} +
\frac{ e \sqrt{n}}{p} \left( \sum_{j > k} \sigma^2_j \right) ^{\frac{1}{2}}
\right]
.
\end{equation}

\end{proposition}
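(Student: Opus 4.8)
The plan is to reduce the bound to the matrix statement of \cite[Theorem 10.6]{halko2011finding}, which is formulated for a matrix acted on by a \emph{standard} Gaussian test matrix and measured in the Euclidean spectral norm, and then to pay for the three mismatches between that setting and ours (Euclidean versus energy norms, Euclidean versus $R$-orthogonal projection, and Euclidean versus operator singular values) by the condition numbers of $\underline M_S$ and $\underline M_R$. The decisive observation is that, read in coordinates, Algorithm~\ref{algo:adaptive_range_approximation} is \emph{exactly} in the scope of \cite{halko2011finding}: writing $\underline G=[\underline r_1,\dots,\underline r_n]$ for the matrix whose columns are the i.i.d.\ standard normal vectors drawn in line~\ref{algo:line:basis_extension}, the space $R^n=\spanlin\{T D_S^{-1}\underline r_i\}$ corresponds in $R$-coordinates to $\range(\underline T\,\underline G)$, so no distributional correction is needed and the whole argument can proceed pathwise before taking expectations.

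First I would introduce the symmetric factors $\underline M_S^{1/2}$ and $\underline M_R^{1/2}$ and pass from the energy norm to the Euclidean norm by the one-sided equivalences $\sqrt{\lambda^{\underline M_S}_{min}}\,\norm{\underline\xi}_2\le\norm{\xi}_S$ and $\norm{\eta}_R\le\sqrt{\lambda^{\underline M_R}_{max}}\,\norm{\underline\eta}_2$. Let $P^{\mathrm{eucl}}$ denote the Euclidean-orthogonal projector onto $\range(\underline T\,\underline G)$. Since $P_{R^n}$ is the $R$-orthogonal (hence best-in-$R$-norm) projector onto that same subspace and $P^{\mathrm{eucl}}$ maps into it, the best-approximation property gives $\norm{(I-P_{R^n})T\xi}_R\le\norm{(I-P^{\mathrm{eucl}})T\xi}_R$ for every $\xi$. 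Chaining these inequalities and taking the supremum over $\xi$ yields the pathwise reduction
\[
\norm{T-P_{R^n}T}\le\sqrt{\frac{\lambda^{\underline M_R}_{max}}{\lambda^{\underline M_S}_{min}}}\;\norm{(I-P^{\mathrm{eucl}})\underline T}_2 ,
\]
so the energy projection error is controlled by the Euclidean matrix projection error that \cite{halko2011finding} estimates.

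Next I would apply \cite[Theorem 10.6]{halko2011finding} to the matrix $\underline T$ with the standard Gaussian test matrix $\underline G$ and the splitting $n=k+p$, $k\ge2$, $p\ge2$, obtaining a bound on $\mathbb{E}\,\norm{(I-P^{\mathrm{eucl}})\underline T}_2$ of the stated shape but written in the \emph{Euclidean} singular values $\tilde\sigma_j$ of $\underline T$. To convert these into the operator singular values $\sigma_j$ of $T$, I would use that the $\sigma_j$ are precisely the Euclidean singular values of the whitened matrix $\underline W:=\underline M_R^{1/2}\,\underline T\,\underline M_S^{-1/2}$: substituting $\underline\chi=\underline M_S^{-1/2}\underline y$ turns the generalized eigenproblem \cref{eq:matrix_version_transfer_eigenvalue_problem} into $\underline W^{t}\underline W\,\underline y=\lambda\,\underline y$. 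Writing $\underline T=\underline M_R^{-1/2}\,\underline W\,\underline M_S^{1/2}$ and invoking the submultiplicative singular-value inequalities $\sigma_j(XYZ)\le\norm{X}_2\,\sigma_j(Y)\,\norm{Z}_2$ gives the termwise estimate $\tilde\sigma_j\le\sqrt{\lambda^{\underline M_S}_{max}/\lambda^{\underline M_R}_{min}}\;\sigma_j$, which I would insert into both summands of the Halko bound.

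Combining the two condition-number factors, $\sqrt{\lambda^{\underline M_R}_{max}/\lambda^{\underline M_S}_{min}}$ from the norm/projection reduction and $\sqrt{\lambda^{\underline M_S}_{max}/\lambda^{\underline M_R}_{min}}$ from the singular-value conversion, produces exactly the prefactor $\sqrt{(\lambda^{\underline M_R}_{max}/\lambda^{\underline M_R}_{min})(\lambda^{\underline M_S}_{max}/\lambda^{\underline M_S}_{min})}$; taking the minimum over all admissible splittings $k+p=n$ and noting $\sqrt{k+p}=\sqrt{n}$ yields \cref{eq:a priori mean}, with the hypothesis $n\ge4$ encoding $k\ge2,\,p\ge2$. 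The main obstacle I anticipate is organizing the reduction in the right order so that the factors land asymmetrically as written rather than collapsing into a single symmetric condition number: the best-approximation step must be paired with the \emph{one-sided} norm equivalences ($\lambda^{\underline M_R}_{max}$ in $R$ and $\lambda^{\underline M_S}_{min}$ in $S$), and the singular-value comparison must be taken in the opposite direction, so that the two square roots multiply to the stated product. A secondary point is to confirm that in coordinates the test vectors are genuinely standard normal, so that \cite{halko2011finding} applies verbatim without any correction for the non-Euclidean inner products entering the randomness.
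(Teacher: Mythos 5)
Your proposal is correct and follows essentially the same route as the paper: reduce to the Euclidean projection error of $\underline T$ via the best-approximation property of the $R$-orthogonal projector and the one-sided norm equivalences (the paper's Lemma~\ref{thm:tomatrix}), apply \cite[Theorem 10.6]{halko2011finding} to $\underline T$ with the standard Gaussian test matrix, and convert the Euclidean singular values of $\underline T$ to the operator singular values of $T$ with the factor $\sqrt{\lambda^{\underline M_S}_{max}/\lambda^{\underline M_R}_{min}}$ (the paper's Lemma~\ref{thm:svals}). The only cosmetic difference is that you prove the singular-value comparison by whitening ($\underline W=\underline M_R^{1/2}\underline T\,\underline M_S^{-1/2}$) and submultiplicativity of singular values, whereas the paper uses cyclic permutation of eigenvalues together with the Courant minimax principle; both yield the identical termwise bound.
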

Before addressing the proof of \cref{thm:convergence_rate_main} we highlight that for operators with a fast decaying spectrum such as the transfer operator the last term in \cref{eq:a priori mean} behaves roughly as $(e\sqrt{k+p}\sigma_{k+1})/p$ and we therefore obtain an approximation that converges approximately as $\sqrt{n}\sigma_{n+1}$ and thus with a nearly optimal rate. 
\cref{thm:convergence_rate_main} extends the results in Theorem 10.6
in \cite{halko2011finding} to the case of finite dimensional linear operators. The terms consisting of the square root of the conditions of the inner product matrices $\underline{M}_{S}$ and $\underline{M}_{R}$ in \cref{eq:a priori mean}
are due to our generalization from the spectral matrix norm as considered in \cite{halko2011finding} to inner products associated with finite dimensional Hilbert spaces. We present a reformulation in the supplementary materials Proposition SM4.2 where the condition of $M_S$ does not appear. The occurrence of the remaining terms in \cref{eq:a priori mean}
is discussed in section SM3 where we summarize the proof of Theorem 10.6
in \cite{halko2011finding}, which read as follows:
\begin{theorem}\cite[Theorem 10.6]{halko2011finding}
\label{thm:halko106108}
Let $\underline{T}\in \mathbb{R}^{N_{R}\times N_{S}}$ and $\underline P_{R^n, 2}$ be the matrix of the orthogonal projection on $R^n$ in the 
euclidean inner product in $\R^{N_R}$ and $\norm{\cdot}_2$ denote the spectral matrix norm.
Then for $n\geq 4$ it holds
\begin{equation*}
\mathbb{E}
\left( \norm{\underline{T} - \underline{P}_{R^{n},2}\underline{T}}_2 \right)
\leq
\min_{\overset{k+p=n}{k\geq 2, p\geq 2}}
\left[
\left( 1 + \sqrt{  \frac{k}{p-1}  } \right) \underline \sigma_{k+1} +
\frac{ e \sqrt{n}}{p} \left( \sum_{j > k} \underline \sigma^2_j \right) ^{\frac{1}{2}}
\right].
\end{equation*}
\end{theorem}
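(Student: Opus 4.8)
The plan is to reproduce the average-case analysis of the randomized range finder for the matrix $\underline{T}$, viewing the $n$ random vectors drawn in Algorithm~\ref{algo:adaptive_range_approximation} as the columns of a single standard Gaussian test matrix $\underline{\Omega}\in\mathbb{R}^{N_{S}\times n}$ with $n=k+p$, and treating the target rank $k\geq 2$ and the oversampling parameter $p\geq 2$ as free analysis parameters. Setting $\underline{Y}=\underline{T}\,\underline{\Omega}$ we have $R^{n}=\range(\underline{Y})$ and $\underline{P}_{R^{n},2}$ is the Euclidean orthogonal projector onto $\range(\underline{Y})$. Writing the singular value decomposition $\underline{T}=\underline{U}\,\underline{\Sigma}\,\underline{V}^{t}$ and partitioning $\underline{\Sigma}=\diag(\underline{\Sigma}_{1},\underline{\Sigma}_{2})$, where $\underline{\Sigma}_{1}$ collects the $k$ largest singular values and $\underline{\Sigma}_{2}$ the remaining ones, with the matching column split $\underline{V}=[\,\underline{V}_{1}\;\;\underline{V}_{2}\,]$, the first step is to exploit the orthogonal invariance of the Gaussian law: since $\underline{V}$ is orthogonal, the blocks $\underline{\Omega}_{1}:=\underline{V}_{1}^{t}\,\underline{\Omega}$ and $\underline{\Omega}_{2}:=\underline{V}_{2}^{t}\,\underline{\Omega}$ are again independent standard Gaussian matrices, of sizes $k\times n$ and $(N_{S}-k)\times n$.

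The second step is a purely deterministic projection estimate. On the almost sure event that $\underline{\Omega}_{1}$ has full row rank, I would introduce the explicit matrix $\underline{Z}:=\underline{Y}\,\underline{\Omega}_{1}^{\dagger}\,\underline{\Sigma}_{1}^{-1}$, where $\underline{\Omega}_{1}^{\dagger}$ is the Moore--Penrose pseudoinverse. A short computation in the rotated coordinates shows $\underline{Z}=\underline{U}\,[\,I_{k};\,\underline{F}\,]$ with $\underline{F}\,\underline{\Sigma}_{1}=\underline{\Sigma}_{2}\,\underline{\Omega}_{2}\,\underline{\Omega}_{1}^{\dagger}$, so that $\range(\underline{Z})\subseteq\range(\underline{Y})$ and hence $\norm{(I-\underline{P}_{R^{n},2})\underline{T}}_{2}\leq\norm{(I-\underline{P}_{\underline{Z}})\underline{T}}_{2}$. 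Evaluating the residual of $\underline{T}$ against the block basis $\underline{Z}$ by a Pythagorean argument yields the deterministic bound
\begin{equation*}
\norm{(I-\underline{P}_{R^{n},2})\underline{T}}_{2}^{2}\leq\norm{\underline{\Sigma}_{2}}_{2}^{2}+\norm{\underline{\Sigma}_{2}\,\underline{\Omega}_{2}\,\underline{\Omega}_{1}^{\dagger}}_{2}^{2}.
\end{equation*}
Using $\sqrt{a+b}\leq\sqrt{a}+\sqrt{b}$ and $\norm{\underline{\Sigma}_{2}}_{2}=\underline{\sigma}_{k+1}$, this reduces the entire task to estimating $\mathbb{E}\,\norm{\underline{\Sigma}_{2}\,\underline{\Omega}_{2}\,\underline{\Omega}_{1}^{\dagger}}_{2}$.

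The third step treats this expectation by conditioning on $\underline{\Omega}_{1}$ and using the independence of $\underline{\Omega}_{2}$. For fixed matrices $\underline{S},\underline{D}$ and a standard Gaussian $\underline{G}$ one has the elementary moment bound $\mathbb{E}\,\norm{\underline{S}\,\underline{G}\,\underline{D}}_{2}\leq\norm{\underline{S}}_{2}\,\norm{\underline{D}}_{F}+\norm{\underline{S}}_{F}\,\norm{\underline{D}}_{2}$; applied with $\underline{S}=\underline{\Sigma}_{2}$, $\underline{G}=\underline{\Omega}_{2}$, $\underline{D}=\underline{\Omega}_{1}^{\dagger}$ and followed by the expectation over $\underline{\Omega}_{1}$, it gives
\begin{equation*}
\mathbb{E}\,\norm{\underline{\Sigma}_{2}\,\underline{\Omega}_{2}\,\underline{\Omega}_{1}^{\dagger}}_{2}\leq\norm{\underline{\Sigma}_{2}}_{2}\,\mathbb{E}\,\norm{\underline{\Omega}_{1}^{\dagger}}_{F}+\norm{\underline{\Sigma}_{2}}_{F}\,\mathbb{E}\,\norm{\underline{\Omega}_{1}^{\dagger}}_{2}.
\end{equation*}
I would then insert the sharp moments of the pseudoinverse of a $k\times(k+p)$ Gaussian matrix, namely $\mathbb{E}\,\norm{\underline{\Omega}_{1}^{\dagger}}_{F}^{2}=k/(p-1)$ and $\mathbb{E}\,\norm{\underline{\Omega}_{1}^{\dagger}}_{2}\leq e\sqrt{k+p}/p$ (both valid for $p\geq 2$), combined with Jensen's inequality for the Frobenius term. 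Recalling $\norm{\underline{\Sigma}_{2}}_{F}=(\sum_{j>k}\underline{\sigma}_{j}^{2})^{1/2}$ and $k+p=n$ and collecting terms produces, for each admissible pair $(k,p)$, the bracketed quantity in the claim; taking the minimum over $k\geq 2$, $p\geq 2$ with $k+p=n$ (which forces $n\geq 4$) finishes the proof.

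The main obstacle I anticipate is twofold. Analytically, the delicate ingredients are the sharp expected-norm estimates for the pseudoinverse of a rectangular Gaussian matrix: the Frobenius identity rests on the expected trace of an inverse Wishart matrix, and the spectral bound on tail estimates for the smallest singular value of a Gaussian matrix; these are precisely what force $p\geq 2$ and the factor $p-1$ in the denominator. Structurally, the second step is the most error-prone, since one must verify that passing to the explicit subspace $\range(\underline{Z})$ is legitimate --- it requires full row rank of $\underline{\Omega}_{1}$, which holds almost surely, and invertibility of $\underline{\Sigma}_{1}$ on the relevant range --- and that the Pythagorean split of the residual in the rotated coordinates is carried out correctly. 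Everything downstream is then routine conditioning-and-Jensen bookkeeping.
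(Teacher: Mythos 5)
Your proposal is correct and is essentially the argument the paper relies on: the paper does not reprove this result but cites \cite[Theorem 10.6]{halko2011finding} and summarizes exactly this proof in its supplementary section SM3 --- the deterministic bound $\norm{(I-\underline{P}_{R^n,2})\underline{T}}_2^2 \leq \norm{\underline{\Sigma}_2}_2^2 + \norm{\underline{\Sigma}_2\,\underline{\Omega}_2\,\underline{\Omega}_1^{\dagger}}_2^2$ via the SVD split and rotational invariance of the Gaussian, followed by conditioning, the moment bound $\mathbb{E}\norm{\underline{S}\,\underline{G}\,\underline{D}}_2 \leq \norm{\underline{S}}_2\norm{\underline{D}}_F + \norm{\underline{S}}_F\norm{\underline{D}}_2$, and the pseudoinverse moments $\mathbb{E}\norm{\underline{\Omega}_1^{\dagger}}_F^2 = k/(p-1)$ and $\mathbb{E}\norm{\underline{\Omega}_1^{\dagger}}_2 \leq e\sqrt{k+p}/p$ for $p\geq 2$. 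Your reconstruction of each ingredient, including the legitimacy of passing to $\range(\underline{Z})\subseteq\range(\underline{Y})$ and the trivial final minimization over admissible splits $k+p=n$ (which forces $n\geq 4$), matches the cited proof step for step.
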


To proceed with the proof of \cref{thm:convergence_rate_main},
we next bound $\norm{T-P_{R^{n}}T}$ by $\norm{\underline T - \underline P_{R^{n},2}\underline T}_2$ times other terms in \cref{thm:tomatrix}. Then we apply 
\cref{thm:halko106108} to the matrix representation $\underline{T}$ of the operator $T$ and finally bound the singular values $\underline \sigma_i$ of the matrix $\underline T$ by the singular values $\sigma_i$ of the operator $T$ in \cref{thm:svals} below to conclude. 

\begin{lemma}
\label{thm:tomatrix}
There holds for some given reduced space $R^{n}$
\begin{equation*}
\norm{T-P_{R^{n}}T} = 
\sup_{\xi \in S} \inf_{\zeta \in R^{n}} \frac{\| T\xi - \zeta \|_{R}}{\|\xi \|_{S}} 
\leq \sqrt{\frac{\lambda^{\underline{M}_{R}}_{max}}{\lambda^{\underline{M}_{S}}_{min}}} \| \underline{T} - \underline{P}_{R^{n},2}\underline{T} \|_2
.
\end{equation*}
\end{lemma}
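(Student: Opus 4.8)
The plan is to first justify the equality and then establish the inequality by translating the $R$- and $S$-inner products into their coordinate representations and inserting a convenient (not necessarily optimal) competitor into the infimum.

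For the equality, I would recall that for fixed $\xi \in S$ the orthogonal projection $P_{R^{n}}T\xi$ is, by definition, the best approximation of $T\xi$ in $R^{n}$ with respect to the $R$-inner product, so that $\norm{(T - P_{R^{n}}T)\xi}_R = \inf_{\zeta \in R^{n}}\norm{T\xi - \zeta}_R$. Dividing by $\norm{\xi}_S$ and taking the supremum over $\xi \neq 0$ then turns the operator norm $\norm{T - P_{R^{n}}T}$ into the stated $\sup_{\xi} \inf_{\zeta}$ expression; this is precisely the first equality and requires nothing beyond the characterization of orthogonal projections as best approximations.

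For the inequality, the key idea is that, although the $R$-orthogonal projection $P_{R^{n}}$ does not in general coincide with the euclidean projection $\underline P_{R^{n},2}$ acting on coordinate vectors, the infimum over $\zeta \in R^{n}$ can only decrease when we substitute one specific feasible competitor. So for each $\xi$ with coordinate vector $\underline\xi = D_S \xi$, I would choose $\zeta := D_R^{-1}(\underline P_{R^{n},2}\, \underline T\, \underline\xi) \in R^{n}$, which indeed lies in $R^{n}$ because $\underline P_{R^{n},2}$ projects onto the coordinate representation of $R^{n}$. Writing $T\xi = D_R^{-1}(\underline T \underline\xi)$, the coordinate vector of $T\xi - \zeta$ is exactly $\underline v := (\underline T - \underline P_{R^{n},2}\underline T)\underline\xi$, so that $\norm{T\xi - \zeta}_R^2 = \underline v^{t}\, \underline M_R\, \underline v$.

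It then remains to pass from the weighted to the euclidean norms. Bounding $\underline v^{t} \underline M_R \underline v \le \lambda^{\underline M_R}_{max}\,\norm{\underline v}_2^2$ and $\norm{\underline v}_2 \le \norm{\underline T - \underline P_{R^{n},2}\underline T}_2\,\norm{\underline\xi}_2$, together with $\norm{\xi}_S^2 = \underline\xi^{t} \underline M_S \underline\xi \ge \lambda^{\underline M_S}_{min}\norm{\underline\xi}_2^2$ in the denominator, yields $\inf_{\zeta}\norm{T\xi-\zeta}_R / \norm{\xi}_S \le \sqrt{\lambda^{\underline M_R}_{max}/\lambda^{\underline M_S}_{min}}\,\norm{\underline T - \underline P_{R^{n},2}\underline T}_2$ for every $\xi$, and taking the supremum over $\xi$ concludes. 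The only genuinely subtle point is the one already flagged: one must resist identifying $P_{R^{n}}$ with $\underline P_{R^{n},2}$ (they differ because the $R$-inner product is not euclidean) and instead exploit that \emph{any} admissible $\zeta$ furnishes an upper bound on the infimum; the coordinate euclidean projection is exactly the choice that brings $\norm{\underline T - \underline P_{R^{n},2}\underline T}_2$ into play.
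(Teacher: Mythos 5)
Your proposal is correct and follows essentially the same route as the paper's proof: the paper likewise reduces the infimum to the $R$-orthogonal projection, passes to coordinates, replaces the $\underline M_R$-orthogonal projection by the euclidean projection $\underline P_{R^{n},2}$ (which, exactly as you argue, can only increase the weighted residual since it is merely another feasible competitor in the subspace), and then applies the eigenvalue bounds $\underline v^{t}\underline M_R\underline v \le \lambda^{\underline M_R}_{max}\norm{\underline v}_2^2$ and $\underline\xi^{t}\underline M_S\underline\xi \ge \lambda^{\underline M_S}_{min}\norm{\underline\xi}_2^2$. The only difference is presentational: you insert the euclidean-projection competitor directly into the infimum, while the paper first writes the exact coordinate form with the $\underline M_R$-orthogonal projection and then swaps it out.
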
 
\begin{proof}
\begin{align*}
\sup_{\xi \in S} \inf_{\zeta \in R^{n}} \frac{\| T\xi - \zeta \|_{R}}{\|\xi \|_{S}} &= \sup_{\xi \in S} \frac{\| T\xi - P_{R^{n}}T\xi \|_{R}}{\|\xi \|_{S}} \\
& = \sup_{\underline{\xi} \in \mathbb{R}^{N_{S}}} \frac{\left((\underline{T}\underline{\xi} - \underline{P}_{R^{n}}\underline{T}\underline{\xi})^{T}\underline{M}_{R}(\underline{T}\underline{\xi} - \underline{P}_{R^{n}}\underline{T}\underline{\xi})\right)^{1/2}}{\sqrt{\underline \xi^{T}\underline{M}_{S}\underline{\xi}}} \\
& \leq \sup_{\underline{\xi} \in \mathbb{R}^{N_{S}}} \frac{\left((\underline{T}\underline{\xi} - \underline{P}_{R^{n},2}\underline{T}\underline{\xi})^{T}\underline{M}_{R}(\underline{T}\underline{\xi} - \underline{P}_{R^{n},2}\underline{T}\underline{\xi})\right)^{1/2}}{\sqrt{\underline \xi^{T}\underline{M}_{S}\underline{\xi}}} \\
& \leq \sqrt{\frac{\lambda^{\underline{M}_{R}}_{max}}{\lambda^{\underline{M}_{S}}_{min}}} \sup_{\underline{\xi} \in \mathbb{R}^{N_{S}}} \frac{\| \underline{T}\underline{\xi} - \underline{P}_{R^{n},2}\underline{T}\underline{\xi} \|_{2} }{\|\underline{\xi}\|_{2}}
\end{align*}
\end{proof}

\begin{lemma}
\label{thm:svals}
Let the singular values $\underline{\sigma}_{j}$ of the matrix $\underline T$ be sorted in non-increasing order, i.e. $\underline{\sigma}_{1} \geq \hdots \geq \underline{\sigma}_{N_{R}}$ and $\sigma_{j}$ be the singular values of the operator $T$, also sorted non-increasing. Then there holds 
$
\underline{\sigma}_{j} \leq
(\lambda^{\underline{M}_{S}}_{max}/\lambda^{\underline{M}_{R}}_{min})^{1/2}
\sigma_{j}$ for all $j=1,\hdots,N_T$.
\end{lemma}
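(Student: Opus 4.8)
The plan is to reduce the claim to a pointwise comparison of Rayleigh quotients, combined with the Courant--Fischer min-max characterization of singular values. The central observation is that for $\xi \in S$ with coefficient vector $\underline{\xi} = D_S \xi$ one has $\|\xi\|_S^2 = \underline{\xi}^T \underline{M}_S \underline{\xi}$, while the image $T\xi$ has coefficient vector $\underline{T}\underline{\xi}$ (since $\underline{T} = D_R T D_S^{-1}$ gives $\underline{T}\underline{\xi} = D_R T \xi$) and hence $\|T\xi\|_R^2 = (\underline{T}\underline{\xi})^T \underline{M}_R (\underline{T}\underline{\xi})$, exactly as used in \cref{thm:tomatrix}. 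I would therefore compare the Euclidean Rayleigh quotient $\|\underline{T}\underline{\xi}\|_2/\|\underline{\xi}\|_2$, which governs the matrix singular values $\underline{\sigma}_j$, with the energy Rayleigh quotient $\|T\xi\|_R/\|\xi\|_S$, which governs the operator singular values $\sigma_j$.

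First I would establish the pointwise estimate. Since the smallest eigenvalue of $\underline{M}_R$ equals $\lambda^{\underline{M}_R}_{min}$, we have $(\underline{T}\underline{\xi})^T\underline{M}_R(\underline{T}\underline{\xi}) \geq \lambda^{\underline{M}_R}_{min}\,\|\underline{T}\underline{\xi}\|_2^2$, while the largest eigenvalue of $\underline{M}_S$ being $\lambda^{\underline{M}_S}_{max}$ gives $\underline{\xi}^T\underline{M}_S\underline{\xi} \leq \lambda^{\underline{M}_S}_{max}\,\|\underline{\xi}\|_2^2$. Combining these and taking square roots yields, for every $\xi \in S$ with $\xi \neq 0$,
\[
\frac{\|\underline{T}\underline{\xi}\|_2}{\|\underline{\xi}\|_2} \leq \sqrt{\frac{\lambda^{\underline{M}_S}_{max}}{\lambda^{\underline{M}_R}_{min}}}\; \frac{\|T\xi\|_R}{\|\xi\|_S}.
\]

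It then remains to transfer this pointwise bound into an inequality between the $j$-th singular values, and this is the step that needs the correct form of the min-max principle. I would use the characterization in which the $j$-th singular value is the \emph{minimum over subspaces of codimension $j-1$ of the maximal Rayleigh quotient}, that is $\sigma_j = \min_{\dim W = N_S-j+1}\max_{0\neq\xi\in W}\|T\xi\|_R/\|\xi\|_S$ and likewise for $\underline{\sigma}_j$ with the Euclidean quotient. Since $D_S$ is a bijection carrying subspaces of $S$ onto subspaces of $\mathbb{R}^{N_S}$ of the same dimension, I would take the optimal codimension-$(j-1)$ subspace $W^{\ast} \subset S$ realizing $\sigma_j$, use $\underline{W}^{\ast} = D_S W^{\ast}$ as a (non-optimal, hence merely upper-bounding) test subspace for $\underline{\sigma}_j$, and apply the pointwise estimate termwise:
\[
\underline{\sigma}_j \leq \max_{0\neq\underline{\xi}\in\underline{W}^{\ast}}\frac{\|\underline{T}\underline{\xi}\|_2}{\|\underline{\xi}\|_2} \leq \sqrt{\frac{\lambda^{\underline{M}_S}_{max}}{\lambda^{\underline{M}_R}_{min}}}\;\max_{0\neq\xi\in W^{\ast}}\frac{\|T\xi\|_R}{\|\xi\|_S} = \sqrt{\frac{\lambda^{\underline{M}_S}_{max}}{\lambda^{\underline{M}_R}_{min}}}\;\sigma_j.
\]
The main obstacle is precisely this last step: it is essential to use the min-over-large-subspaces form rather than the max-over-small-subspaces form, because only then does inserting a single test subspace produce an \emph{upper} bound on $\underline{\sigma}_j$; with the other form one would obtain a lower bound and the inequality would point the wrong way. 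For indices $j$ exceeding the rank $N_T$ both sides vanish, so the stated range $j = 1,\dots,N_T$ is covered.
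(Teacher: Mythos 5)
Your proof is correct, and it rests on the same fundamental tool as the paper's --- the Courant minimax principle combined with the extreme-eigenvalue bounds $\lambda^{\underline M_R}_{min}\norm{\underline y}_2^2 \leq \underline y^{T}\underline M_R\,\underline y$ and $\underline\xi^{T}\underline M_S\,\underline\xi \leq \lambda^{\underline M_S}_{max}\norm{\underline\xi}_2^2$ --- but the execution is organized differently. The paper argues at the level of eigenvalues of the squared operators: it writes $\underline\sigma_j^2=\lambda_j(\underline T^{t}\underline T)$ and $\sigma_j^2=\lambda_j(\underline M_S^{-1}\underline T^{t}\underline M_R\underline T)$, inserts the two factors through two successive eigenvalue comparisons, and invokes the cyclic-permutation identity for eigenvalues of matrix products to deal with the non-symmetric matrix $\underline M_S^{-1}\underline T^{t}\underline M_R\underline T$. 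You instead prove a single pointwise comparison of the Euclidean and energy Rayleigh quotients and then transfer the optimal subspace of dimension $N_S-j+1$ through the bijection $D_S$, using the min-over-subspaces form of Courant--Fischer applied directly to the singular values. Your route is somewhat more self-contained (no cyclic-permutation lemma, no detour through a generalized eigenproblem), and your explicit remark that one must use the minimum-over-large-subspaces characterization --- since inserting a test subspace into the other form would bound $\underline\sigma_j$ from below rather than above --- identifies precisely the step where a careless version of this argument would fail. The restriction to $j=1,\dots,N_T$ is handled equivalently in both proofs, since beyond the rank both sets of singular values vanish.
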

\begin{proof}
For notational convenience we denote within this proof the $j$-th eigenvalue of a matrix 
$\underline A$ by $\lambda_{j}(\underline A)$. All singular values for $j = 1, \dots, N_T$ are different from zero. Therefore, there holds
$\underline \sigma_j^2 = \lambda_j(\underline T^t \underline T)$
and
$\sigma_j^2 = \lambda_j(\underline M_S^{-1} \underline T^t \underline M_R \underline T)$
.
Recall that $\underline T$ is the matrix representation of $T$ and note that 
$\underline M_S^{-1} \underline T^t \underline M_R$
is the matrix representation of the adjoint operator $T^*$. The non-zero eigenvalues of a product of matrices $\underline A \underline B$ 
are identical to the non-zero eigenvalues of the product $\underline B \underline A$
(see e.g.~\cite[Theorem 1.3.22]{horn2012matrix}), hence
$
\lambda_j(\underline M_S^{-1} \underline T^t \underline M_R \underline T)
=
\lambda_j(
\underline T^t \underline M_R \underline T \underline M_S^{-1} )
.
$
We may then apply the Courant minimax principle to infer 
$
\lambda_{j}(\underline{T}^{t}\underline{M}_{R}\underline{T}) (\lambda^{\underline{M}_{S}}_{max})^{-1} \leq \lambda_{j}(\underline{M}_{S}^{-1}\underline{T}^{t}\underline{M}_{R}\underline{T}).
$
Employing once again cyclic permutation and the Courant minimax principle yields
\begin{equation}\label{est2:proof eigenvalue est}
\lambda_{j}(\underline{T}^{t}\underline{T}) 
\leq \lambda_{j}(\underline{T}^{t}\underline{M}_{R}\underline{T}) \frac{1}{\lambda^{\underline{M}_{R}}_{min}} \leq \lambda_{j}(\underline{M}_{S}^{-1}\underline{T}^{t}\underline{M}_{R}\underline{T})\frac{\lambda^{\underline{M}_{S}}_{max}}{\lambda^{\underline{M}_{R}}_{min}}
\end{equation}
and thus the claim.
\end{proof}

\begin{remark}
The result of Algorithm \ref{algo:adaptive_range_approximation},
when interpreted as functions and not as coefficient vectors,
is independent of the choice of the basis in $R$.
Disregarding numerical errors,
the result would be the same if the algorithm was executed in an orthonormal basis in $R$.
Thus, we would expect \cref{thm:convergence_rate_main} to hold
also without the factor
$(\lambda^{\underline{M}_{R}}_{max}/\lambda^{\underline{M}_{R}}_{min})^{1/2}$.
\end{remark}

\subsection{Adaptive convergence criterion and a probabilistic a posteriori error bound}
\label{sec:convergence_criterion}
When approximating the range of an operator, usually its singular values are
unknown. To construct a space with prescribed approximation
quality, Algorithm \ref{algo:adaptive_range_approximation} uses the probabilistic a posteriori error estimator defined in \cref{definition:test}, which is analyzed in this subsection.
\begin{proposition}[Norm estimator failure probability]
\label{thm:errest_reliable}
The norm estimator 
$
\Delta(O, n_t, \varepsilon_\mathrm{testfail})
$
is an upper bound of the operator norm $\norm{O}$ with probability greater or equal than $(1 - \varepsilon_\mathrm{testfail})$.
\end{proposition}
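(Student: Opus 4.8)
The plan is to reduce the statement to a one–dimensional Gaussian tail estimate by exploiting the leading singular direction of $O$ together with the independence of the $n_t$ test vectors. First I would observe that the estimator fails to bound $\norm{O}$ precisely when every test vector satisfies $\norm{O D_S^{-1}\underline r_i}_R < \norm{O}/\cest$, since then $\Delta(O,n_t,\varepsilon_{\mathrm{testfail}}) = \cest \max_i \norm{O D_S^{-1}\underline r_i}_R < \norm{O}$. Because the $\underline r_i$ are drawn independently, the probability of this joint event factorizes, so it suffices to prove for a single random normal vector $\underline r$ that $P(\norm{O D_S^{-1}\underline r}_R < \norm{O}/\cest) \le \sqrt[n_t]{\varepsilon_{\mathrm{testfail}}}$; raising this to the power $n_t$ then yields the failure bound $\varepsilon_{\mathrm{testfail}}$, and hence reliability with probability at least $1-\varepsilon_{\mathrm{testfail}}$. (The degenerate case $\norm{O}=0$ is trivial.)

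The core step is a lower bound on $\norm{O D_S^{-1}\underline r}_R$ in terms of a scalar Gaussian. Let $v\in S$ and $u\in R$ be the leading right and left singular vectors of $O$, normalized so that $\norm{v}_S=\norm{u}_R=1$ and $Ov=\norm{O}\,u$. Testing against $u$ gives $\norm{O D_S^{-1}\underline r}_R \ge |(O D_S^{-1}\underline r,u)_R| = \norm{O}\,|(D_S^{-1}\underline r,v)_S|$, where the last equality uses the singular-vector identity $(O\xi,u)_R=\norm{O}(\xi,v)_S$. Writing $\underline v = D_S v$, the quantity $Z:=(D_S^{-1}\underline r,v)_S = \underline r^{t}\underline M_S\underline v$ is a centered Gaussian whose variance equals $\norm{\underline M_S\underline v}_2^2 = \underline v^{t}\underline M_S^2\underline v$. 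Using that $\underline M_S^2\succeq \lambda^{\underline M_S}_{min}\underline M_S$ (the two factors commute and are positive semidefinite) together with the normalization $\underline v^{t}\underline M_S\underline v = 1$, I would bound this variance from below by $\lambda^{\underline M_S}_{min}$.

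Finally I would convert the single-vector event into an $\mathrm{erf}$ expression and substitute the constant. From the bound above, $\norm{O D_S^{-1}\underline r}_R < \norm{O}/\cest$ forces $|Z| < 1/\cest$, and for a centered Gaussian of variance $\sigma^2\ge \lambda^{\underline M_S}_{min}$ one has $P(|Z|<1/\cest) = \mathrm{erf}\bigl(1/(\cest\,\sigma\sqrt 2)\bigr) \le \mathrm{erf}\bigl(1/(\cest\sqrt{2\lambda^{\underline M_S}_{min}})\bigr)$ by monotonicity of $\mathrm{erf}$ and the variance lower bound. Inserting the definition $\cest = [\sqrt{2\lambda^{\underline M_S}_{min}}\,\mathrm{erf}^{-1}(\sqrt[n_t]{\varepsilon_{\mathrm{testfail}}})]^{-1}$ collapses the argument of $\mathrm{erf}$ to exactly $\mathrm{erf}^{-1}(\sqrt[n_t]{\varepsilon_{\mathrm{testfail}}})$, giving $P(|Z|<1/\cest)\le \sqrt[n_t]{\varepsilon_{\mathrm{testfail}}}$ and closing the argument.

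I expect the main obstacle to be the careful bookkeeping of the two inner products: the test vectors are drawn as standard normals in $\mathbb{R}^{N_S}$ rather than with respect to the $S$-inner product, so the variance of the scalar projection is $\underline v^{t}\underline M_S^2\underline v$ instead of $1$, and one must correctly insert $\lambda^{\underline M_S}_{min}$ to pass from the $S$-normalization $\underline v^{t}\underline M_S\underline v = 1$ to a usable lower bound. The one place where a sign slip would invalidate the estimate is the monotonicity direction — a larger variance only \emph{decreases} $P(|Z|<1/\cest)$, so the worst case is the lower variance bound $\sigma^2 = \lambda^{\underline M_S}_{min}$, which is exactly what the definition of $\cest$ is calibrated against.
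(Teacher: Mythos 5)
Your proposal is correct and follows essentially the same route as the paper: factorize the joint failure event over the $n_t$ independent test vectors, lower-bound $\norm{O D_S^{-1}\underline r}_R$ by projecting onto the leading singular direction to reduce to the scalar Gaussian $(v_1, D_S^{-1}\underline r)_S$, bound its variance from below by $\lambda^{\underline M_S}_{min}$, and calibrate against $\mathrm{erf}$ via the definition of $\cestshort$. The only cosmetic difference is that you obtain the variance bound from $\underline M_S^2\succeq\lambda^{\underline M_S}_{min}\underline M_S$ rather than from the spectral decomposition of $\underline M_S$ used in the paper's auxiliary lemma, which is the same estimate.
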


\begin{proposition}[Norm estimator effectivity]
\label{thm:errest_efficient}
Let the effectivity $\eta$ of the norm estimator $
\Delta(O, n_t, \varepsilon_\mathrm{testfail})
$ be defined as
\begin{equation}\label{eq:effectivity}
\eta(O, n_t, \varepsilon_\mathrm{testfail}) := 
\frac{
\Delta(O, n_t, \varepsilon_\mathrm{testfail}) 
}{
\norm{O}
}.
\end{equation}
Then, there holds
$$
P\Big( \eta \leq \ceff \Big) \geq 1 - \varepsilon_\mathrm{testfail}
,
$$
where the constant $\ceff$ is defined as
$$
\ceff := 
\left[
Q^{-1}\left(\frac{N_O}{2}, \frac{\varepsilon_\mathrm{testfail}}{n_t}\right)
\frac{\lambda^{\underline M_S}_{max}}{\lambda^{\underline M_S}_{min}}
\left( \mathrm{erf}^{-1} \left( \sqrt[n_t]{\varepsilon_\mathrm{testfail}} \right) \right)^{-2}
\right]^{1/2}
$$
and $Q^{-1}$ is the inverse of the upper normalized incomplete gamma function;
that means $Q^{-1}(a,y) = x$ when $Q(a,x) = y$.\footnote{
Recall that the definition of the upper normalized incomplete gamma function is
$$
Q(a,x) = 
\dfrac{\int_x^\infty t^{a-1} e^{-t} \mathrm{d} t
}{
\int_0^\infty t^{a-1} e^{-t} \mathrm{d} t
}.
$$
}
\end{proposition}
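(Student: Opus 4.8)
The plan is to bound the effectivity $\eta$ from above by a scaled chi-squared random variable and then convert this into the stated probability via a union bound over the $n_t$ test vectors; this is the upper-bound counterpart of the lower-bound argument underlying the reliability statement \cref{thm:errest_reliable}. I would start from the singular value decomposition of $O$, writing $O\xi = \sum_{j=1}^{N_O}\sigma_j (\chi_j,\xi)_S \hat\phi_j$ with $S$-orthonormal right singular vectors $\chi_j$ and $R$-orthonormal left singular vectors $\hat\phi_j$, so that $\norm{O D_S^{-1}\underline r_i}_R^2 = \sum_{j=1}^{N_O}\sigma_j^2 (\chi_j, D_S^{-1}\underline r_i)_S^2$. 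Bounding $\sigma_j \leq \sigma_1 = \norm{O}$ and expressing the inner products in coordinates as $(\chi_j, D_S^{-1}\underline r_i)_S = \underline\chi_j^{t} \underline M_S \underline r_i$ with $\underline\chi_j = D_S\chi_j$, I obtain $\norm{O D_S^{-1}\underline r_i}_R^2 \leq \norm{O}^2 \norm{\underline y_i}_2^2$, where $\underline y_i := \underline X^{t} \underline M_S \underline r_i \in \mathbb{R}^{N_O}$ and $\underline X := [\underline\chi_1,\dots,\underline\chi_{N_O}]$.

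Since $\underline r_i$ is a random normal vector, $\underline y_i$ is a centered Gaussian vector with covariance $\underline\Sigma = \underline X^{t} \underline M_S^2 \underline X$. The $S$-orthonormality $\underline X^{t} \underline M_S \underline X = I$ combined with the Loewner inequality $\underline M_S^2 \leq \lambda^{\underline M_S}_{max} \underline M_S$ gives $\underline\Sigma \leq \lambda^{\underline M_S}_{max} I$; writing $\underline y_i = \underline\Sigma^{1/2}\underline z_i$ with $\underline z_i$ standard normal therefore yields $\norm{\underline y_i}_2^2 = \underline z_i^{t}\underline\Sigma\underline z_i \leq \lambda^{\underline M_S}_{max}\norm{\underline z_i}_2^2$, where $\norm{\underline z_i}_2^2$ follows a chi-squared distribution with $N_O$ degrees of freedom. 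Inserting this together with the definition of $\cestshort$ into $\eta^2 = \cestshort^2 \max_{i} \norm{O D_S^{-1}\underline r_i}_R^2 / \norm{O}^2$ produces the deterministic bound $\eta^2 \leq \frac{\lambda^{\underline M_S}_{max}}{2\lambda^{\underline M_S}_{min}}\big(\mathrm{erf}^{-1}(\sqrt[n_t]{\varepsilon_\mathrm{testfail}})\big)^{-2}\max_i \norm{\underline z_i}_2^2$, in which the $\mathrm{erf}^{-1}$ factor and one power of $\lambda^{\underline M_S}$ come directly from $\cestshort$.

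To obtain the probabilistic statement I would use the identity $P(\chi^2_{N_O}\leq x) = 1 - Q(N_O/2, x/2)$. Choosing $x^\ast := 2\, Q^{-1}(N_O/2, \varepsilon_\mathrm{testfail}/n_t)$ makes the right-hand side of the deterministic bound equal to $\ceff^2$ exactly on the event $\{\max_i \norm{\underline z_i}_2^2 \leq x^\ast\}$, so that this event is contained in $\{\eta \leq \ceff\}$. By independence of the $\underline z_i$ and the definition of $x^\ast$, there holds $P(\max_i \norm{\underline z_i}_2^2 \leq x^\ast) = (1 - \varepsilon_\mathrm{testfail}/n_t)^{n_t}$, and Bernoulli's inequality bounds this from below by $1 - \varepsilon_\mathrm{testfail}$, which closes the argument.

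The step I expect to require the most care is the passage through the non-identity covariance $\underline\Sigma$: because the inner product matrix $\underline M_S$ is in general not a multiple of the identity, $\norm{\underline y_i}_2^2$ is not itself chi-squared but only dominated from above by a scaled chi-squared variable, and it is precisely this domination that forces the conditioning factor $\lambda^{\underline M_S}_{max}/\lambda^{\underline M_S}_{min}$ into $\ceff$. The remaining ingredients --- the SVD expansion, the Gaussian-to-chi-squared reduction, and the union bound --- are routine; the only bookkeeping point is to keep the number of degrees of freedom equal to the rank $N_O$, so that the first argument of $Q^{-1}$ is $N_O/2$.
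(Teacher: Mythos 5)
Your proposal is correct and follows essentially the same route as the paper's proof: an SVD expansion with $\sigma_j\leq\sigma_1=\norm{O}$, reduction of $\sum_j(\chi_j,D_S^{-1}\underline r_i)_S^2$ to $\lambda^{\underline M_S}_{max}$ times a chi-squared variable with $N_O$ degrees of freedom, the incomplete-gamma tail identity, and a union bound over the $n_t$ test vectors. The only difference is that you make explicit the covariance-domination step (the Loewner bound $\underline\Sigma\preceq\lambda^{\underline M_S}_{max}I$ justifying the passage to independent standard normals), which the paper states only informally by introducing ``new random normal variables''.
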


The proofs of \cref{thm:errest_reliable,thm:errest_efficient} follow at the end of this subsection.

In \cref{thm:errest_reliable} we analyzed the probability for one estimate to fail. Based on that, we can analyze the algorithm failure probability. To quantify this probability, we first note that Algorithm \ref{algo:adaptive_range_approximation} will terminate after at most $N_T$ steps. Then, the approximation space $R^{n}$
has the same dimension as $\range(T)$ and as $R^{n}\subset \range(T)$ we have $R^{n}=\range(T)$ and thus $\norm{T- P_{R^{n}}T}=0$. The a posteriori error estimator defined in \cref{definition:test} is therefore executed at most $N_T$ times. Each time, the probability for failure is given by \cref{thm:errest_reliable} and with a union bound argument we may then infer that the failure probability for the whole algorithm is
$
\varepsilon_\mathrm{algofail} 
\leq
N_T
 \ \varepsilon_\mathrm{testfail}
.
$

To prove \cref{thm:errest_reliable,thm:errest_efficient},
it is central to analyze the distribution of the inner product $(v, D_S^{-1}\underline r)_S$ for any $v \in S$ with $\norm{v}_S = 1$ and a random normal vector $\underline{r}$.

\begin{lemma}[Distribution of inner product]
The inner product of a normed vector $v$ in $S$ with a random normal vector
$(v, D_S^{-1}\underline r)_S$ 
is a Gaussian distributed random variable with mean zero and variance $s^2$, where
$
\lambda^{\underline M_S}_{min} \leq s^2 \leq \lambda^{\underline M_S}_{max}.
$
\end{lemma}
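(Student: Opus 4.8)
The plan is to unwind the definitions so that the inner product $(v, D_S^{-1}\underline r)_S$ becomes a linear functional of the random normal vector $\underline r$, and then invoke the standard fact that a linear combination of i.i.d.\ standard normals is again Gaussian with computable mean and variance. First I would write both arguments in coordinates: if $v = \sum_i \underline v_i \psi_i|_{\Gamma_{out}} \in S$ has coordinate vector $\underline v = D_S v \in \mathbb{R}^{N_S}$, and $D_S^{-1}\underline r$ is the element of $S$ whose coordinate vector is $\underline r$, then by definition of the inner product matrix $\underline M_S$ we have $(v, D_S^{-1}\underline r)_S = \underline v^{\,t}\, \underline M_S\, \underline r = (\underline M_S \underline v)^{t}\underline r = \underline a^{\,t}\underline r$, where $\underline a := \underline M_S \underline v$. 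This exhibits the inner product as a fixed linear functional $\underline a^{\,t}\underline r$ of the random normal vector.

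Next I would apply the elementary probabilistic fact: since the entries of $\underline r$ are i.i.d.\ $\mathcal{N}(0,1)$, the scalar $\underline a^{\,t}\underline r = \sum_k \underline a_k \underline r_k$ is Gaussian with mean $\sum_k \underline a_k \,\mathbb{E}[\underline r_k] = 0$ and variance $\sum_k \underline a_k^2\,\mathrm{Var}(\underline r_k) = \|\underline a\|_2^2$. Hence the random variable is $\mathcal{N}(0, s^2)$ with
\begin{equation*}
s^2 = \|\underline a\|_2^2 = \|\underline M_S \underline v\|_2^2 = \underline v^{\,t}\underline M_S^2\,\underline v.
\end{equation*}
This already settles the Gaussianity and the zero-mean claims; what remains is to pin down the stated two-sided bound on $s^2$.

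For the variance bounds I would use the normalization hypothesis $\|v\|_S = 1$, which in coordinates reads $\underline v^{\,t}\underline M_S\,\underline v = 1$. The goal is then to bound the Rayleigh-type quantity $s^2 = \underline v^{\,t}\underline M_S^2\,\underline v$ subject to $\underline v^{\,t}\underline M_S\,\underline v = 1$. Since $\underline M_S$ is symmetric positive definite, I would diagonalize it, writing $\underline M_S = \underline U\,\underline\Lambda\,\underline U^{t}$ with orthonormal $\underline U$ and $\underline\Lambda = \diag(\mu_1,\dots,\mu_{N_S})$ the eigenvalues, all lying in $[\lambda^{\underline M_S}_{min}, \lambda^{\underline M_S}_{max}]$. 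Setting $\underline y = \underline U^{t}\underline v$, the constraint becomes $\sum_k \mu_k\, \underline y_k^2 = 1$ and the target becomes $s^2 = \sum_k \mu_k^2\, \underline y_k^2 = \sum_k \mu_k\,(\mu_k\,\underline y_k^2)$. Writing the nonnegative weights $w_k := \mu_k\,\underline y_k^2$ with $\sum_k w_k = 1$, we have $s^2 = \sum_k \mu_k\, w_k$, a convex combination of the eigenvalues $\mu_k$; therefore it is trapped between the smallest and largest eigenvalues, giving exactly $\lambda^{\underline M_S}_{min} \leq s^2 \leq \lambda^{\underline M_S}_{max}$.

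The argument is essentially routine, so I do not anticipate a serious obstacle; the only point requiring a little care is keeping the roles of $\underline M_S$ versus $\underline M_S^2$ straight, i.e.\ correctly identifying the quadratic form for the variance as $\underline v^{\,t}\underline M_S^2\,\underline v$ while the normalization is with respect to $\underline M_S$. The convex-combination reformulation handles this cleanly without any eigenvalue perturbation estimates. An equivalent and perhaps slicker route is to substitute $\underline w := \underline M_S^{1/2}\underline v$, so that $\|v\|_S^2 = \|\underline w\|_2^2 = 1$ and $s^2 = \underline w^{\,t}\underline M_S\,\underline w$ is a genuine Rayleigh quotient of $\underline M_S$ at the unit vector $\underline w$, to which the extremal characterization of eigenvalues applies directly.
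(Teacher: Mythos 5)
Your proof is correct and follows essentially the same route as the paper: both reduce the inner product to a linear functional of the i.i.d.\ normal vector, identify the variance as the quadratic form $\underline v^{\,t}\underline M_S^2\,\underline v$ via the spectral decomposition of $\underline M_S$, and bound it between the extreme eigenvalues using the normalization $\underline v^{\,t}\underline M_S\,\underline v = 1$. If anything, your formulation $\underline a^{\,t}\underline r$ with $\underline a = \underline M_S\underline v$ is slightly cleaner, since it invokes Gaussianity of a single linear combination of i.i.d.\ normals rather than implicitly relying on the independence of the projections $\underline m_{S,i}^{T}\underline r$ onto the orthonormal eigendirections, as the paper's version does.
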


\begin{proof}
We use the spectral decomposition of the inner product matrix \\
$\underline M_S = \sum_{i=1}^{N_S} \underline m_{S,i} \lambda^{\underline M_S}_i \underline m_{S,i}^T$
with eigenvalues $\lambda^{\underline M_S}_i$ and eigenvectors $\underline m_{S,i}$.
There holds
\begin{align}
(v, D_S^{-1} \underline r)_S
&= \sum_{i=1}^{N_S} (D_S v)^T \underline m_{S,i} \lambda^{\underline M_S}_i \underline m_{S,i}^T \underline r . %
\end{align}
As $m_{S,i}$ is normed with respect to the euclidean inner product, the
term $\underline m_{S,i}^T \underline r$ is a normal distributed random variable.
Using the rules for addition and scalar multiplication of Gaussian random variables,
one sees that the inner product $(v, D_S^{-1}\underline r)_S$ is a Gaussian random variable with
variance
$
s^2 = \sum_{i=1}^{N_S} ( (D_S v)^T \underline m_{S,i} \lambda^{\underline M_S}_i  )^2
$
The variance $s^2$ can easily be bounded as follows:
\begin{eqnarray*}
s^2 
&= \sum_{i=1}^{N_S} \left( (D_S v) ^T \underline m_{S,i} \lambda^{\underline M_S}_i  \right) ^2 
\leq \sum_{i=1}^{N_S} \left( (D_S v) ^T \underline m_{S,i} \right) ^2 \lambda^{\underline M_S}_i \max_i (\lambda^{\underline M_S}_i)
&= \lambda^{\underline M_S}_{max}\\
s^2 
&= \sum_{i=1}^{N_S} \left( (D_S v) ^T \underline m_{S,i} \lambda^{\underline M_S}_i  \right) ^2
\geq \sum_{i=1}^{N_S} \left( (D_S v) ^T \underline m_{S,i} \right) ^2 \lambda^{\underline M_S}_i \min_i (\lambda^{\underline M_S}_i)
&= \lambda^{\underline M_S}_{min}
\end{eqnarray*}
\end{proof}\\
Using this result, we can prove \cref{thm:errest_reliable,thm:errest_efficient}.
Proof of \cref{thm:errest_reliable}:
\begin{proof}
We analyze the probability for the event that the norm estimator {\color{white}sfasdfsfd}
$
\Delta(O, n_t, \varepsilon_\mathrm{testfail})
$
is smaller than the operator norm $\norm{O}$:
\begin{eqnarray*}
P\Big( \Delta(O, n_t, \varepsilon_\mathrm{testfail}) < \norm{O} \Big)
&=&
P\Big( 
\cest
\max_{i \in 1, \dots, n_t} \norm{ O \ D_S^{-1} \ \underline r_i }_R 
< \norm{O}
\Big)
.
\end{eqnarray*}
The probability that all test vector norms are smaller than a 
certain value is the 
the product of 
the probabilities 
that each test vector is smaller than that value.
So with a new random normal vector $\underline r$ it holds
\begin{eqnarray*}
P\Big( \Delta(O, n_t, \varepsilon_\mathrm{testfail}) < \norm{O} \Big)&=&
P\Big( 
\cest
\norm{ O \ D_S^{-1} \ \underline r }_R 
< \norm{O}
\Big)^{n_t}.
\end{eqnarray*}
Using the singular value decomposition of the operator $O$: $O \varphi = \sum_i u_i \sigma_i (v_i, \varphi)_S$ we obtain
\begin{eqnarray*}
P\Big( \Delta(O, n_t, \varepsilon_\mathrm{testfail}) < \norm{O} \Big)&\leq&
P\Big( 
\cest
\norm{ u_1 \sigma_1 \ \left(v_1, D_S^{-1} \ \underline r\right)_S }_R 
< \norm{O}
\Big)^{n_t}\\
&=&
P\Big( 
\cest
\sigma_1 \ \left|\left(v_1, D_S^{-1} \ \underline r\right)_S\right|
< \norm{O}
\Big)^{n_t}\\
&=&
P\Big( 
\cest
\left|\left(v_1, D_S^{-1} \ \underline r\right)_S\right|
< 1
\Big)^{n_t}.\\
\end{eqnarray*}
The inner product $\left|\left(v_1, D_S^{-1} \ \underline r\right)_S\right|$ is a Gaussian distributed 
random variable with variance greater $\lambda^{\underline M_S}_{min}$, so
with a new normal distributed random variable $r'$ it holds
\begin{eqnarray*}
P\Big( \Delta(O, n_t, \varepsilon_\mathrm{testfail}) < \norm{O} \Big)&\leq&
P\Big( 
\sqrt{\lambda^{\underline M_S}_{min}} |r'|
<
\sqrt{2 \lambda^{\underline M_S}_{min}} \cdot \mathrm{erf}^{-1}\left(\sqrt[n_t]{\varepsilon_\mathrm{testfail}}\right)
\Big)^{n_t}\\
&=&
\mathrm{erf} \left( \frac{\sqrt{2} \mathrm{erf}^{-1}\left(\sqrt[n_t]{\varepsilon_\mathrm{testfail}}\right) }{\sqrt{2}} \right)^{n_t}
 = \varepsilon_\mathrm{testfail}.
\end{eqnarray*}
\end{proof}

Proof of \cref{thm:errest_efficient}:
\begin{proof}
The constant $\cest$ is defined as in the proof of \cref{thm:errest_reliable}.
To shorten notation, we write $\cestshort$ for $\cest$ and $\ceffshort$ for $\ceff$ within this proof. Invoking the definition of $\Delta(O, n_t, \varepsilon_\mathrm{testfail})$ yields
\begin{eqnarray*}
P\Big( \Delta(O, n_t, \varepsilon_\mathrm{testfail}) > \ceffshort \norm{O} \Big)
&=&
P\Big( 
\cestshort
\max_{i \in 1, \dots, n_t} \norm{ O \ D_S^{-1} \ \underline r_i }_R 
> \ceffshort \norm{O}
\Big)
\end{eqnarray*}
and by employing a new random normal vector $\underline r$ we obtain
\begin{eqnarray*}
P\Big( \Delta(O, n_t, \varepsilon_\mathrm{testfail}) > \ceffshort \norm{O} \Big)&\leq&
n_t
P\Big( 
\cestshort
\norm{ O \ D_S^{-1} \ \underline r }_R 
> \ceffshort \norm{O}
\Big).
\end{eqnarray*}
Using the singular value decomposition of the operator $O$: $O \varphi = \sum_i u_i \sigma_i (v_i, \varphi)_S$ results in
\begin{eqnarray*}
P\Big( \Delta(O, n_t, \varepsilon_\mathrm{testfail}) > \ceffshort \norm{O} \Big)&\leq&
n_t
P\Big( 
\cestshort
\norm{\sum_i u_i \sigma_1 (v_i, D_S^{-1} \underline r)_S}_R
> \ceffshort \norm{O}
\Big).
\end{eqnarray*}
For a new random normal variables $r_i$ we have
\begin{eqnarray*}
P\Big( \Delta(O, n_t, \varepsilon_\mathrm{testfail}) > \ceffshort \norm{O} \Big)&\leq&
n_t
P\Big( 
\cestshort
\sigma_1 \sqrt{\lambda^{\underline M_S}_{max}} \sqrt{\sum_i r_i^2}
> \ceffshort \norm{O}
\Big)\\
=
n_t
P\Big(
\sqrt{\sum_i r_i^2}
> \frac{\ceffshort}{\cestshort} \sigma_1^{-1}
\sqrt{\lambda^{\underline M_S}_{max}}^{-1}
\norm{O}
\Big)
&=&
n_t
P\Big(
\sqrt{\sum_i r_i^2}
> \frac{\ceffshort}{\cestshort}
\sqrt{\lambda^{\underline M_S}_{max}}^{-1}
\Big)\\
&=&
n_t
P\Big(
\sum_i r_i^2
> \frac{\ceffshort^2}{\cestshort^2}
\sqrt{\lambda^{\underline M_S}_{max}}^{-2}
\Big).
\end{eqnarray*}
The sum of squared random normal variables is a random variable with chi-squared distribution.
Its cumulative distribution function is the incomplete, normed gamma function.
As we have a $>$ relation, the upper incomplete normed gamma function is used,
which we denote by $Q(\frac k 2, \frac x 2)$ here.
Therefore, we conclude
\begin{eqnarray*}
P\Big( \Delta(O, n_t, \varepsilon_\mathrm{testfail}) > \ceff \norm{O} \Big)&\leq&
n_t Q\left(\frac{N_O}{2}, \frac{\ceff^2}{\cest^2} \frac{1}{2 \lambda^{\underline M_S}_{max}}\right)\\
&=& \varepsilon_\mathrm{testfail}.
\end{eqnarray*}
\end{proof}

\section{Numerical experiments}\label{sect:numerical_experiments}

In this section we demonstrate first that the reduced local spaces generated by Algorithm \ref{algo:adaptive_range_approximation} yield an approximation that converges at a nearly optimal rate. Moreover, we validate the a priori error bound in \cref{eq:a priori mean}, the a posteriori error estimator \cref{eq:a posteriori error estimator}, and the effectivity \cref{eq:effectivity}.
To this end, we consider four test cases, starting in \cref{subsect:analytic} with an
example for which the singular values of the transfer operator are known.
The main focus of this subsection is a thorough validation of the theoretical findings in \cref{sect:randomized_la}, including a comprehensive testing on how the results depend on various parameters such as the basis size $n$, the number of test vectors $n_{t}$, and the mesh size.
In addition, CPU time measurements are given.
The second numerical example in \cref{subsect:helmholtz}
examines the behavior of the proposed algorithm in the more challenging case of the Helmholtz equation.
In \cref{subsect:linearelasticity} we numerically analyze the theoretical results from \cref{sect:randomized_la} for a transfer operator whose singular values decay rather slowly and discrete spaces with large $N$, $N_{S}$, and $N_{R}$. Furthermore, we demonstrate that Algorithm \ref{algo:adaptive_range_approximation} is computationally efficient. Finally, we employ the GFEM to construct a global approximation from the local reduced spaces generated by Algorithm \ref{algo:adaptive_range_approximation} in the fourth test case in \cref{subsect:GFEM}, demonstrating that the excellent local approximation capacities of the local reduced spaces carry over to the global approximation.

For the implementation of the first test case, no FEM software library was used.
The implementation for the third test case is based on the finite element library \texttt{libMesh} \cite{Kiretal06}.
For the second and fourth test case we used the software library \texttt{pyMOR} \cite{MiRaSc16}.
The complete source code for reproduction of all results shown in \cref{subsect:analytic,subsect:helmholtz,subsect:GFEM} is provided in
\cite{andreas_buhr_2018}.

\subsection{Analytic interface problem}\label{subsect:analytic}
To analyze the behavior of the proposed algorithm, we first
apply it to an analytic problem where the singular values of the transfer operator are known. We refer to this numerical example as {\it Example 1}. We consider the problem $\mathcal{A}=-\Delta$, $f =0$, and assume that $\Omega = (-L,L)\times (0,\analyticwidth)$, $\Gamma_{out}=\{-L,L\}\times (0,\analyticwidth)$, and $\Gamma_{in} = \{0\}\times (0,\analyticwidth)$. Moreover, we prescribe homogeneous Neumann boundary conditions on $\partial \Omega \setminus \Gamma_{out}$ and arbitrary Dirichlet boundary conditions on $\Gamma_{out}$,
see also \cref{fig:illustration geometry} (left).
The analytic solution is further discussed in the supplementary materials section SM2. This example was introduced in \cite[Remark 3.3]{SmePat16}. We equip $S$ and $R$ with the $L^2$-inner product on the respective interfaces. Recall that the transfer operator maps the Dirichlet data to the inner interface,
i.e.~with
$H$ as the space of all discrete solutions, we define
\begin{equation}\label{eq:mod transfer operator}
T(v|_{\Gamma_{out}}) := v|_{\Gamma_{in}} \qquad \forall v \in H.
\end{equation}\footnote{Thanks to the form of the $A$-harmonic functions (SM2.1) and the fact that $\Omega$ is symmetric with respect to the $x_{2}$-axis, we have that $u - (1/|\Gamma_{out}|) \int_{\Gamma_{out}} u = u - (1/|\Omega|) \int_{\Omega} u$ and therefore that the singular vectors and singular values of \cref{eq:transfer_op_gamma} equal the ones of \cref{eq:mod transfer operator} apart from the constant function, which has to be added for the former but not for the latter.}
The singular values of the transfer operator are
$
\sigma_{i}=1/ \left(\sqrt{2}\cosh((i-1)\pi L / \analyticwidth)\right).
$ 

For the experiments, we use $L = \analyticwidth = 1$, unless stated otherwise. We discretize the problem by meshing it with a regular mesh of squares of size $h \cdot h$, 
where $1/h$ ranges from 20 to 320 in the experiments. On each square, we use bilinear 
Q1 ansatz functions,
which results in e.g.~51,681 DOFs, $N_S = 322$ and $N_R= 161$ for $1/h=160$.

In \cref{fig:random modes} the first five basis vectors
as generated by Algorithm \ref{algo:adaptive_range_approximation}
in one particular run
are shown side by side with the first five basis vectors
of the optimal space, i.e.~the optimal modes in
\cref{fig:optimal modes}.
While not identical, the basis functions
generated using the randomized approach are
smooth and have strong similarity with the optimal ones.
Unless stated otherwise, we
present statistics over 100,000 evaluations,
use a maximum failure probability of $\varepsilon_\mathrm{algofail} = 10^{-15}$,
and use $\min(N_S, N_R)$ as an upper bound for $N_T$.

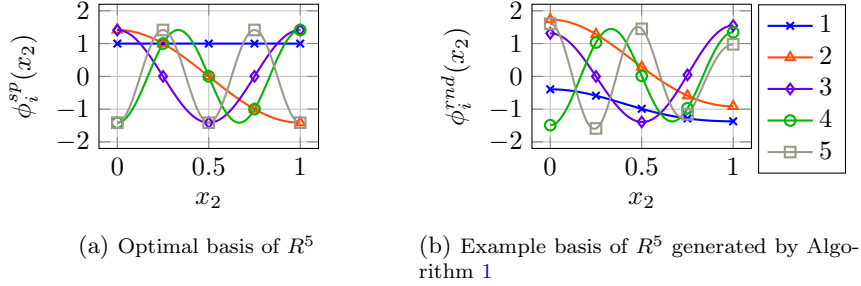
\begin{figure}
\centering
\begin{subfigure}[t]{0.4\textwidth}
\begin{tikzpicture}
\begin{axis}[
    width=4.5cm,
    height=3.5cm,
    xlabel=$x_2$,
    ylabel=$\phi_i^{sp}(x_2)$,
    legend pos=outer north east,
    ymin=-2.2,
    ymax=2.2,
    grid=both,
    grid style={line width=.1pt, draw=gray!20},
    major grid style={line width=.2pt,draw=gray!50},
  ]
  \addplot+[blue,  mark=x,        mark repeat=40, thick] table[x expr=\coordindex / 160, y index=0] {omodes.dat};
  \addplot+[oran,  mark=triangle, mark repeat=40, thick] table[x expr=\coordindex / 160, y index=1] {omodes.dat};
  \addplot+[lila,  mark=diamond,  mark repeat=40, thick] table[x expr=\coordindex / 160, y index=2] {omodes.dat};
  \addplot+[gruen,  mark=o,        mark repeat=40, thick] table[x expr=\coordindex / 160, y index=3] {omodes.dat};
  \addplot+[grau,  mark=square,   mark repeat=40, thick] table[x expr=\coordindex / 160, y index=4] {omodes.dat};
\end{axis}
\end{tikzpicture}
\subcaption{\footnotesize Optimal basis of $R^5$}
\label{fig:optimal modes}
\end{subfigure}\quad
\begin{subfigure}[t]{0.45\textwidth}
\centering
\begin{tikzpicture}
\begin{axis}[
    width=4.5cm,
    height=3.5cm,
    xlabel=$x_2$,
    ylabel=$\phi_i^{rnd}(x_2)$,
    ymin=-2.2,
    ymax=2.2,
    legend pos=outer north east,
    grid=both,
    grid style={line width=.1pt, draw=gray!20},
    major grid style={line width=.2pt,draw=gray!50},
  ]
  \addplot+[blue,  mark=x,        mark repeat=40, thick] table[x expr=\coordindex / 160, y index=0] {modes.dat};
  \addplot+[oran,  mark=triangle, mark repeat=40, thick] table[x expr=\coordindex / 160, y index=1] {modes.dat};
  \addplot+[lila,  mark=diamond,  mark repeat=40, thick] table[x expr=\coordindex / 160, y index=2] {modes.dat};
  \addplot+[gruen,  mark=o,        mark repeat=40, thick] table[x expr=\coordindex / 160, y index=3] {modes.dat};
  \addplot+[grau,  mark=square,   mark repeat=40, thick] table[x expr=\coordindex / 160, y index=4] {modes.dat};
  \legend{1,2,3,4,5}
\end{axis}
\end{tikzpicture}
\subcaption{\footnotesize Example basis of $R^5$ generated by Algorithm \ref{algo:adaptive_range_approximation}}
\label{fig:random modes}
\end{subfigure}
\vspace{-10pt}
\caption{\footnotesize
Comparison of optimal basis functions with the basis functions generated by Algorithm
\ref{algo:adaptive_range_approximation} for {\it Example 1}.
Basis functions are normalized to an $L^2(\Gamma_{in})$ norm of one.
}
\label{img:modes}
\vspace{-15pt}
\end{figure}

\begin{figure}
\centering
\begin{subfigure}[t]{0.55\textwidth}
\centering
\begin{tikzpicture}
\begin{semilogyaxis}[
    width=5cm,
    height=5cm,
    xmin=-1,
    xmax=13,
    ymin=1e-17,
    ymax=1e2,
    xlabel=basis size $n$,
    ylabel=$\norm{T - P_{R^{n}}T}$,
    grid=both,
    grid style={line width=.1pt, draw=gray!20},
    major grid style={line width=.2pt,draw=gray!50},
    minor xtick={1,2,3,4,5,6,7,8,9,10,11,12,13,14,15},
    minor ytick={1e-15, 1e-14, 1e-13, 1e-12, 1e-11, 1e-10, 1e-9, 1e-8, 1e-7, 1e-6, 1e-5, 1e-4, 1e-3, 1e-2, 1e-1, 1e0, 1e1, 1e2, 1e3},
    ytick={1e0, 1e-5, 1e-10, 1e-15},
    max space between ticks=25,
    legend style={at={(1.4,1.35)},anchor=north east},
  ]
  \addplot+[densely dotted, blue,  mark=o, mark options={solid}] table[x expr=\coordindex,y index=4] {twosquares_percentiles.dat};
  \addplot+[densely dashed, blue,  mark=x, mark options={solid}] table[x expr=\coordindex,y index=3] {twosquares_percentiles.dat};
  \addplot+[solid,          black, thick, mark=x] table[x expr=\coordindex,y index=2] {twosquares_percentiles.dat};
  \addplot+[densely dashed, gruen, mark=x, mark options={solid}] table[x expr=\coordindex,y index=1] {twosquares_percentiles.dat};
  \addplot+[densely dotted, gruen, mark=o, mark options={solid}] table[x expr=\coordindex,y index=0] {twosquares_percentiles.dat};
  \addplot+[solid,          red,   mark=0, thick] table[x index=0, y index=1] {experiment_twosquares_svddecay_8.dat};
  \legend{max, 75 percentile, 50 percentile, 25 percentile, min, $\sigma_{i+1}$};
\end{semilogyaxis}
\end{tikzpicture}
\subcaption{\footnotesize Percentiles, worst case, and best case}
\label{fig:deviation percentiles}
\end{subfigure}
\begin{subfigure}[t]{0.4\textwidth}
\centering
\begin{tikzpicture}
\begin{semilogyaxis}[
    width=5cm,
    height=5cm,
    xmin=-1,
    xmax=13,
    ymin=1e-17,
    ymax=1e2,
    xlabel=basis size $n$,
    yticklabels={,,},
    grid=both,
    grid style={line width=.1pt, draw=gray!20},
    major grid style={line width=.2pt,draw=gray!50},
    minor xtick={1,2,3,4,5,6,7,8,9,10,11,12,13,14,15},
    minor ytick={1e-16, 1e-15, 1e-14, 1e-13, 1e-12, 1e-11, 1e-10, 1e-9, 1e-8, 1e-7, 1e-6, 1e-5, 1e-4, 1e-3, 1e-2, 1e-1, 1e0, 1e1, 1e2, 1e3},
    ytick={1e0, 1e-5, 1e-10, 1e-15},
    max space between ticks=25,
    legend style={at={(1.1,1.35)},anchor=north east},
  ]
  \addplot+[] table[x index=0, y index=1]{twosquares_expectation.dat};
  \addplot+[solid,          black, thick, mark=x] table[x expr=\coordindex,y index=0] {twosquares_means.dat};
  \legend{a priori limit for mean, mean};
\end{semilogyaxis}
\end{tikzpicture}
\subcaption{\footnotesize Mean of deviation and a priori limit}
\label{fig:deviation mean}
\end{subfigure}
\vspace{-10pt}
\caption{\footnotesize Projection 
error $\sup_{\xi \in S} \inf_{\zeta \in R^{n}} \frac{\| T\xi - \zeta \|_{R}}{\|\xi \|_{S}} = \norm{T - P_{R^{n}}T}$
over basis size $n$ for {\it Example 1}.
Meshsize $h=1/160$.\\
}
\label{fig:interface_a_priori}
\vspace{-15pt}
\end{figure}

We first quantify the approximation quality of the spaces
$R^n$ in dependence of the basis size $n$, disregarding
the adaptive nature of Algorithm \ref{algo:adaptive_range_approximation}.
In \cref{fig:deviation percentiles}, statistics
over the achieved projection error $\norm{T - P_{R^n}T}$
are shown along with the singular values 
$\sigma_{n+1}$ of the transfer operator $T$.
$\sigma_{n+1}$ is a lower bound for the
projection error and it is the projection error
that is achieved using an optimal basis.
It shows that while the algorithm most of the time
produces a basis nearly as good as the optimal basis, sometimes it
needs two or three basis vectors more. 
This is in line with the predictions by theory, see the discussion
after \cref{thm:convergence_rate_main}.
The mean value of the projection error 
converges with the same rate as the a priori error bound
given in
\cref{thm:convergence_rate_main}
with increasing basis size. The a priori error bound is consistently
around three orders of magnitude larger than the actual error, 
until the actual error hits the numerical noise between
$10^{-14}$ and $10^{-15}$, see \cref{fig:deviation mean}. This is mainly due to the fact that the singular values decay very fast for the present example and an index shift in the singular values by $p\geq 2$ as required by the a priori error bound \cref{eq:a priori mean} therefore results in a much smaller error than predicted by the a priori error bound.  Note that we have $(\lambda_{max}^{\underline{M}_{R}}/\lambda_{min}^{\underline{M}_{R}})^{1/2}\approx (\lambda_{max}^{\underline{M}_{S}}/\lambda_{min}^{\underline{M}_{S}})^{1/2}\approx 2$. 

The adaptive behavior of Algorithm \ref{algo:adaptive_range_approximation}
is analyzed in \cref{fig:interface_adaptive_quartiles}.
\cref{fig:adaptive performance} shows that
for $n_t = 10$
the algorithm succeeded to generate a space with the
requested approximation quality every single time in 
the 100,000 test runs and most of the time,
the approximation quality is about one or two
orders of magnitude better than required.
\cref{fig:number of testvecs} shows
the influence of the number of test vectors $n_t$:
With a low number of test vectors like 3 or 5, 
the algorithm produces spaces with an approximation
quality much better than requested, which 
is unfavorable as the basis sizes are larger than 
necessary. 10 or 20 test vectors seem to be a 
good compromise, as enlarging $n_t$
to 40 or 80 results in only little improvements
while increasing computational cost. This different behavior of Algorithm \ref{algo:adaptive_range_approximation} for various numbers of test vectors $n_{t}$ is due to the scaling of the effectivity of the a posteriori error estimator $\eta(T-P_{R^{n}}T, n_t, \varepsilon_\mathrm{testfail})$ as defined in \cref{eq:effectivity} in the number of test vectors $n_{t}$: The median effectivity $\eta(T-P_{R^{n}}T, n_t, \varepsilon_\mathrm{testfail})$
is 29.2 for $n_t = 10$, 10.4 for $n_t = 20$, and 6.1 for $n_t = 40$. We may thus also  conclude that the a posteriori error estimator \cref{eq:a posteriori error estimator} is a sharp bound for the present test case. 

Analyzing the numerical effectivity of the a posteriori error estimator
$\eta(T-P_{R^{n}}T, n_t, \varepsilon_\mathrm{testfail})$ and comparing it to its
theoretical upper bound $\ceff$ in \cref{fig:numerical efficiency 1},
it can be observed that the theoretical upper bound becomes a sharper bound
with increasing number of test vectors $n_t$.
The reason is the decreasing dispersion of the normalized
maximal test vector norm
\begin{equation}
\label{eq:normalized test vector norm}
\frac{\max\limits_{i=1, \dots, n_t} \norm{(T - P_{R^n}T) D_S^{-1} \underline r_i}_R }{ \norm{T - P_{R^n}T}},
\end{equation}
as shown in \cref{fig:numerical efficiency 2}.
The normalized test vector norm is bound from above by $\cest^{-1} \ceff$ and from below by $\cest^{-1}$
with error probability $\varepsilon_\mathrm{testfail}$.

The quality of the produced spaces $R^n$
should be independent of the mesh size $h$.
\cref{fig:h dependency deviation}
confirms this.
After a preasymptotic regime, the deviation $\norm{T - P_{R^n}T}$
is independent of the mesh size.
In the preasymptotic regime, the finite element
space is not capable of approximating the
corresponding modes.
But while the deviation $\norm{T - P_{R^n}T}$
is independent of the mesh size, the norm of the test
vectors used in the a posteriori error estimator
in Algorithm \ref{algo:adaptive_range_approximation}
is not (see \cref{fig:h dependency testvecnorm}).
The maximum norm of test vectors scales with the deviation and with $\sqrt{h}$.
In the adaptive algorithm, the scaling with $\sqrt{h}$ is compensated by the factor 
$({\lambda^{\underline M_S}_{min}})^{-1/2}$ in $\cest$.
To analyze the behavior in $h$, the geometry parameters were chosen as $L=0.5$ and $\analyticwidth=1$
to have a slower decay of the singular values of the transfer operator.

To examine CPU times we use {\it Example 1}
in a larger configuration
with $L=1$, $W=8$ and $1/h = 200$.
This results in 638.799 unknowns,
$N_S = 3202$, and $N_R = 1601$.
The
measured CPU times
for a simple, single threaded implementation
are given in 
\cref{tab:cpu_times}.
The transfer operator is implemented
implicitly. Its matrix is not
assembled. Instead, the corresponding problem
is solved using the sparse direct solver 
SuperLU \cite{superlu_ug99,superlu99}
each
time the operator is applied.
For Algorithm \ref{algo:adaptive_range_approximation},
a target accuracy $\algotol$ of $10^{-4}$,
the number of testvectors $n_t = 20$,
and a maximum failure probability $\varepsilon_\mathrm{algofail} = 10^{-15}$
is used. In one test run, it resulted in an approximation space $R^n$
of dimension $39$. It only evaluated the operator $n + n_t = 59$ times.
Each operator evaluation was measured to take $0.301$ seconds,
so a runtime of approximately $(n + n_t) * 0.301\mathrm{s} \approx 17.8$s is expected.
The measured runtime of 20.4 seconds is slightly higher, due to the
orthonormalization of the basis vectors and the projection of the test vectors.

CPU times for the calculation of the optimal space of same size
are given for comparison. 
The ``eigs'' function in ``scipy.sparse.linalg'', which is based on ARPACK, is used
to find the eigensystem of
$T T^*$.
However, the calculation using ARPACK is not adaptive. 
To employ ARPACK, the required number of vectors
has to be known in advance,
which is why we expect that in general,
the comparison would be even more in favor of the adaptive randomized
algorithm.

\begin{table}
\begin{center}
\begin{tabular}{r|c}
\multicolumn{2}{c}{\emph{Properties of transfer operator}}\\
\hline
unknowns of corresponding problem& 638,799\\
LU factorization time in s & 14.1\\
operator evaluation time in s & 0.301 \\
adjoint operator evaluation time in s & 0.301 
\end{tabular}\\
\vspace{10pt}
\begin{tabular}{r|c|c}
\multicolumn{3}{c}{\emph{Properties of basis generation}}\\
\cline{2-3}
& Algorithm \ref{algo:adaptive_range_approximation} & Scipy/ARPACK \\
\hline
(resulting) basis size $n$ & 39 & 39 \\
operator evaluations & 59 & 79 \\
adjoint operator evaluations & 0 & 79 \\
\hline
execution time in s (w/o factorization) & 20.4 & 47.9 
\end{tabular}
\end{center}
\caption{
CPU times for {\it Example 1}
with $L=1$, $W=8$ and $1/h = 200$.
Single threaded performance.
}
\label{tab:cpu_times}
\vspace{-15pt}
\end{table}

\begin{figure}
\centering
\begin{subfigure}[t]{0.54\textwidth}
\centering
\begin{tikzpicture}
\begin{loglogaxis}[
    width=6cm,
    height=4cm,
    xmin=1e-13,
    xmax=1e4,
    x dir=reverse,
    ymin=1e-15,
    ymax=1e1,
    xlabel=target error $\algotol$,
    ylabel=$\norm{T - P_{R^{n}}T}$,
    grid=both,
    grid style={line width=.1pt, draw=gray!20},
    major grid style={line width=.2pt,draw=gray!50},
    minor xtick={1e-15, 1e-14, 1e-13, 1e-12, 1e-11, 1e-10, 1e-9, 1e-8, 1e-7, 1e-6, 1e-5, 1e-4, 1e-3, 1e-2, 1e-1, 1e0, 1e1, 1e2, 1e3},
    minor ytick={1e-15, 1e-14, 1e-13, 1e-12, 1e-11, 1e-10, 1e-9, 1e-8, 1e-7, 1e-6, 1e-5, 1e-4, 1e-3, 1e-2, 1e-1, 1e0, 1e1, 1e2, 1e3},
    xtick={1e3, 1e0, 1e-3, 1e-6, 1e-9, 1e-12},
    ytick={1e3, 1e0, 1e-3, 1e-6, 1e-9, 1e-12},
    max space between ticks=25,
    legend style={at={(1.2,1.75)},anchor=north east},
  ]
  \addplot+[densely dotted, blue,  thick, mark=none] table[x index=0,y index=5] {twosquares_adaptive_convergence.dat};
  \addplot+[densely dashed, blue,  thick, mark=none] table[x index=0,y index=4] {twosquares_adaptive_convergence.dat};
  \addplot+[solid,          black, thick, mark=none] table[x index=0,y index=3] {twosquares_adaptive_convergence.dat};
  \addplot+[densely dashed, red,   thick, mark=none] table[x index=0,y index=2] {twosquares_adaptive_convergence.dat};
  \addplot+[densely dotted, red,   thick, mark=none] table[x index=0,y index=1] {twosquares_adaptive_convergence.dat};
    \addplot+[solid, grau, thick, mark=o,mark repeat=10] table[x index=0,y index=0] {twosquares_adaptive_convergence.dat};
  \legend{max, 75 percentile, 50 percentile, 25 percentile, min,$y=x$};
\end{loglogaxis}
\end{tikzpicture}
\subcaption{\footnotesize Quartiles for 10 test vectors.}
\label{fig:adaptive performance}
\end{subfigure}
\begin{subfigure}[t]{0.44\textwidth}
\centering
\begin{tikzpicture}
\begin{loglogaxis}[
    width=6cm,
    height=4cm,
    xmin=1e-13,
    xmax=1e4,
    x dir=reverse,
    ymin=1e-15,
    ymax=1e1,
    xlabel=target error $\algotol$,
    yticklabels={,,},
    grid=both,
    grid style={line width=.1pt, draw=gray!20},
    major grid style={line width=.2pt,draw=gray!50},
    minor xtick={1e-15, 1e-14, 1e-13, 1e-12, 1e-11, 1e-10, 1e-9, 1e-8, 1e-7, 1e-6, 1e-5, 1e-4, 1e-3, 1e-2, 1e-1, 1e0, 1e1, 1e2, 1e3},
    minor ytick={1e-15, 1e-14, 1e-13, 1e-12, 1e-11, 1e-10, 1e-9, 1e-8, 1e-7, 1e-6, 1e-5, 1e-4, 1e-3, 1e-2, 1e-1, 1e0, 1e1, 1e2, 1e3},
    xtick={1e3, 1e0, 1e-3, 1e-6, 1e-9, 1e-12},
    ytick={1e3, 1e0, 1e-3, 1e-6, 1e-9, 1e-12},
    legend style={at={(1.2,1.75)},anchor=north east},
  ]
  \addplot+[mark=square,      mark repeat=10, thick] table[x index=0,y index=1] {twosquares_adaptive_num_testvecs.dat};
  \addplot+[mark=o     , mark repeat=12, thick,solid] table[x index=0,y index=2] {twosquares_adaptive_num_testvecs.dat};
  \addplot+[mark=none,   mark repeat=14, thick,solid] table[x index=0,y index=3] {twosquares_adaptive_num_testvecs.dat};
  \addplot+[mark=triangle,  mark repeat=16, thick,solid] table[x index=0,y index=4] {twosquares_adaptive_num_testvecs.dat};
  \addplot+[mark=diamond, mark repeat=18, thick,solid] table[x index=0,y index=5] {twosquares_adaptive_num_testvecs.dat};
  \addplot+[mark=x,      mark repeat=20, thick,solid] table[x index=0,y index=6] {twosquares_adaptive_num_testvecs.dat};
    \addplot+[densely dashed,grau,thick,mark=none] table[x index=0,y index=0] {twosquares_adaptive_num_testvecs.dat};
  \legend{
    $n_t = 3$,
    $n_t = 5$,
    $n_t = 10$,
    $n_t = 20$,
    $n_t = 40$,
    $n_t = 80$,
    $y=x$,
  };
\end{loglogaxis}
\end{tikzpicture}
\subcaption{\footnotesize Maximum error for given number of test vectors.}
\label{fig:number of testvecs}
\end{subfigure}
\vspace{-10pt}
\caption{\footnotesize Projection
error $\sup_{\xi \in S} \inf_{\zeta \in R^{n}} \frac{\| T\xi - \zeta \|_{R}}{\|\xi \|_{S}} = \norm{T - P_{R^{n}}T}$
over target projection error for {\it Example 1}.
Meshsize $h=1/160$.
}
\label{fig:interface_adaptive_quartiles}
\vspace{-15pt}
\end{figure}
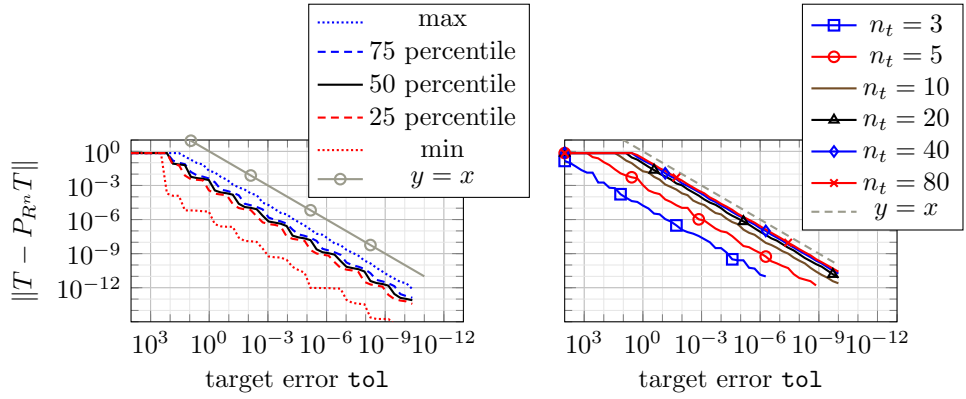

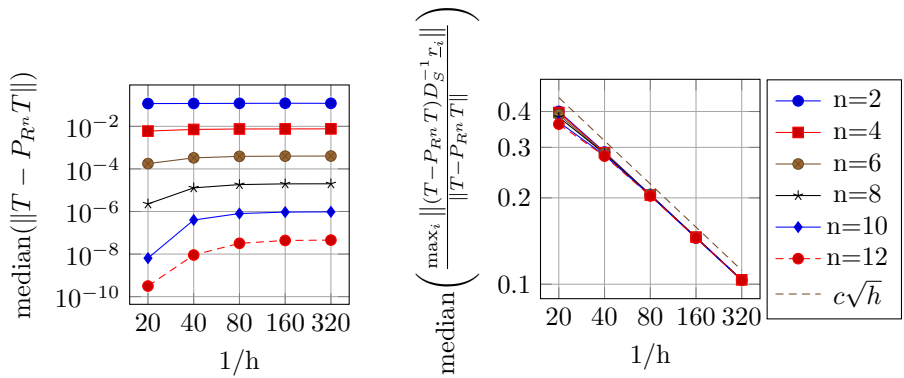
\begin{figure}
\centering
\begin{subfigure}[t]{0.4\textwidth}
\centering
\begin{tikzpicture}
\begin{loglogaxis}[
    width=4.5cm,
    height=4.5cm,
    xtick={20,40,80,160,320},
    xticklabels={20,40,80,160,320},
    ytick={1e-10,1e-8, 1e-6, 1e-4, 1e-2, 1e0},
    xlabel=1/h,
    ylabel=median($\norm{T - P_{R^n}T}$),
    legend pos=outer north east,
    grid=both,
    major grid style={line width=.2pt,draw=gray!70},
  ]
  \addplot+[] table[x index=0, y index=1] {experiment_h_deviation.dat};
  \addplot+[] table[x index=0, y index=2] {experiment_h_deviation.dat};
  \addplot+[] table[x index=0, y index=3] {experiment_h_deviation.dat};
  \addplot+[] table[x index=0, y index=4] {experiment_h_deviation.dat};
  \addplot+[] table[x index=0, y index=5] {experiment_h_deviation.dat};
  \addplot+[] table[x index=0, y index=6] {experiment_h_deviation.dat};
\end{loglogaxis}
\end{tikzpicture}
\subcaption{\footnotesize h dependency of deviation}
\label{fig:h dependency deviation}
\end{subfigure}
\begin{subfigure}[t]{0.55\textwidth}
\centering
\begin{tikzpicture}
\begin{loglogaxis}[
    width=4.5cm,
    height=4.5cm,
    xtick={20,40,80,160,320},
    xticklabels={20,40,80,160,320},
    ytick={0.1,0.2,0.3,0.4},
    yticklabels={0.1,0.2,0.3,0.4},
    xlabel=1/h,
    ylabel=median$\left(\frac{\max_i \norm{(T - P_{R^n}T)D_S^{-1} \underline r_i}}{\norm{T - P_{R^n}T}}\right)$,
    legend pos=outer north east,
    grid=both,
    major grid style={line width=.2pt,draw=gray!70},
  ]
  \addplot+[] table[x index=0, y index=1] {experiment_h_testvecs.dat};
  \addplot+[] table[x index=0, y index=2] {experiment_h_testvecs.dat};
  \addplot+[] table[x index=0, y index=3] {experiment_h_testvecs.dat};
  \addplot+[] table[x index=0, y index=4] {experiment_h_testvecs.dat};
  \addplot+[] table[x index=0, y index=5] {experiment_h_testvecs.dat};
  \addplot+[] table[x index=0, y index=6] {experiment_h_testvecs.dat};
  \addplot+[mark=none, domain=20:320] {2*x^(-0.5)};
  \legend{n=2, n=4, n=6, n=8, n=10, n=12, $c \sqrt{h}$};
\end{loglogaxis}
\end{tikzpicture}
\subcaption{\footnotesize h dependency of testvector norm}
\label{fig:h dependency testvecnorm}
\end{subfigure}
\vspace{-10pt}
\caption{\footnotesize h dependency for {\it Example 1}.
Statistics over 10,000 samples.
}
\label{fig:h_dependency}
\vspace{-15pt}
\end{figure}

\begin{figure}
\centering
\begin{subfigure}[t]{0.4\textwidth}
\centering
\begin{tikzpicture}
\begin{axis}[
    ymin=4,
    ymax=16,
    xlabel= $n_t$,
    ylabel= $\ceffshort \left/ \frac{\Delta( T - P_{R^n}T, n_t, \varepsilon_\mathrm{testfail}) }{ \norm{T - P_{R^n}T}} \right.$,
    width=5cm,
    height=5cm,
    grid style={line width=.1pt, draw=gray!20},
    major grid style={line width=.2pt,draw=gray!50},
    grid=both,
    minor xtick={0, 10, 20, 30, 40, 50, 60, 70, 80, 90, 100},
    minor ytick={4,5,6,7,8,9,10,11,12,13,14,15,16},
  ]
  \addplot+[mark=none, mark repeat=10, thick, densely dashed] table[x index=0, y expr=\thisrowno{5} / \thisrowno{3}] {twosquares_testvecnorms.dat};
\end{axis}
\end{tikzpicture}
\subcaption{\footnotesize numerical efficiency vs.~its upper bound}
\label{fig:numerical efficiency 1}
\end{subfigure}
\begin{subfigure}[t]{0.5\textwidth}
\centering
\begin{tikzpicture}
\begin{semilogyaxis}[
    xlabel= $n_t$,
    ylabel= $\frac{\max\limits_{i=1, \dots, n_t} \norm{(T - P_{R^n}T) D_S^{-1} \underline r_i}_R }{ \norm{T - P_{R^n}T}}$,
    width=5cm,
    height=5cm,
    ymin = 1e-7,
    ymax = 10,
    grid style={line width=.1pt, draw=gray!20},
    major grid style={line width=.2pt,draw=gray!50},
    grid=both,
    ytick={1e0, 1e-2, 1e-4, 1e-6},
    minor xtick={0, 10, 20, 30, 40, 50, 60, 70, 80, 90, 100},
    minor ytick={1e0, 1e-1, 1e-2, 1e-3, 1e-4, 1e-5, 1e-6},
    legend pos=outer north east,
    legend style={at={(0.98,0.02)},anchor=south east},
  ]
  \addplot+[mark=none, mark repeat=10, thick, densely dashed] table[x index=0, y index=5] {twosquares_testvecnorms.dat};
  \addplot+[mark=diamond, mark repeat=10, thick, red] table[x index=0, y index=4] {twosquares_testvecnorms.dat};
  \addplot+[mark=square, mark repeat=10, thick, black] table[x index=0, y index=3] {twosquares_testvecnorms.dat};
  \addplot+[mark=triangle, mark repeat=10, thick, gruen] table[x index=0, y index=2] {twosquares_testvecnorms.dat};
  \addplot+[mark=none, mark repeat=10, thick, densely dotted] table[x index=0, y index=1] {twosquares_testvecnorms.dat};
  \legend{upper limit, max, median, min, lower limit};
\end{semilogyaxis}
\end{tikzpicture}
\subcaption{\footnotesize normalized test vector norm}
\label{fig:numerical efficiency 2}
\end{subfigure}
\vspace{-10pt}
\caption{\footnotesize
Numerical efficiency for {\it Example 1}.
}
\label{fig:nt_dependency}
\vspace{-15pt}
\end{figure}
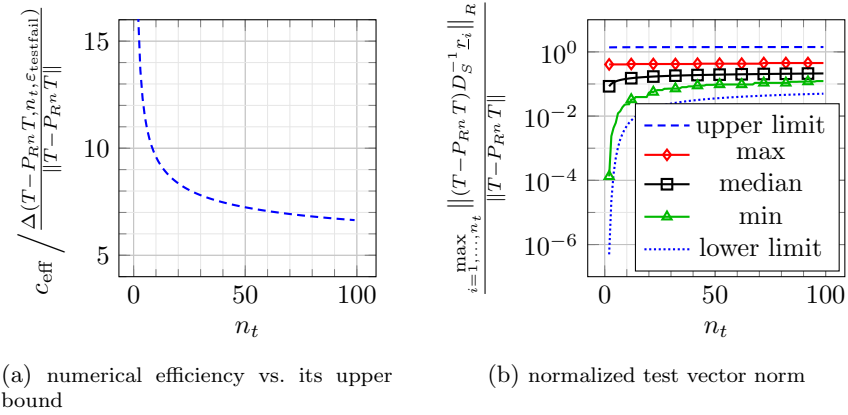

\subsection{Helmholtz equation}\label{subsect:helmholtz}
In this subsection we analyze the behavior of the proposed algorithm
in a numerical test case approximating the solution of
the Helmholtz equation. The domain $\Omega$, the boundaries $\Gamma_{in}$
and $\Gamma_{out}$ and the boundary conditions are the same as in \cref{subsect:analytic},
only the operator $\mathcal{A}$ differs and is defined as
$\mathcal{A} = -\Delta - \wavenumber^2$ in this subsection.
As for {\it Example 1}, it has 51,681 DOFs, $N_S = 322$ and $N_R= 161$ for $1/h=160$.
We refer to this numerical example as {\it Example 2}.
We assume the problem to be inf-sup stable and thus uniquely solvable,
which is the case as long as it is not in a resonant configuration.
A treatment of the resonant case is beyond the scope of this publication.

\begin{figure}
\centering
\begin{subfigure}[t]{0.5\textwidth}
\begin{tikzpicture}
\begin{semilogyaxis}[%
    width=5cm,
    height=4cm,
    legend pos=outer north east,
    legend style={at={(0.8,0.5)},anchor=south west},
    xmin=0,
    xmax=20,
    xtick={0, 5, 10, 15, 20},
    ytick={1e-15, 1e-10, 1e-5, 1e0},
    xlabel=$i$,
    ylabel=$\sigma_{i+1}$,
    grid=both,
    grid style={line width=.1pt, draw=gray!20},
    major grid style={line width=.2pt,draw=gray!50},
    minor xtick={1,2,3,4,5,6,7,8,9,10,11,12,13,14,15,16,17,18,19,20},
    minor ytick={1e-15, 1e-14, 1e-13, 1e-12, 1e-11, 1e-10, 1e-9, 1e-8, 1e-7, 1e-6, 1e-5, 1e-4, 1e-3, 1e-2, 1e-1, 1e0, 1e1, 1e2, 1e3},
]
  \addplot+[blue, mark=x, mark repeat=5, thick] table[x index=0, y index=1] {helmholtz_selected.dat};
  \addplot+[oran, mark=triangle, mark repeat = 5, thick] table[x index=0, y index=2] {helmholtz_selected.dat};
  \addplot+[lila, mark=diamond, mark repeat = 5, thick] table[x index=0, y index=3] {helmholtz_selected.dat};
  \addplot+[gruen, mark=o, mark repeat=5, thick] table[x index=0, y index=4] {helmholtz_selected.dat};
  \addplot+[grau, mark=square, mark repeat=5, thick] table[x index=0, y index=5] {helmholtz_selected.dat};
  \legend{%
    $\wavenumber=0$,
    $\wavenumber=10$,
    $\wavenumber=20$,
    $\wavenumber=30$,
    $\wavenumber=40$,
  }
\end{semilogyaxis}
\end{tikzpicture}
\end{subfigure}
\begin{subfigure}[t]{0.45\textwidth}
\centering
\iffalse
\begin{tikzpicture}
\begin{axis}[%
    width=5cm,
    height=5cm,
    xmin=0,
    xmax=21,
    ymin=0,
    ymax=50,
    zmin=-17,
    zmax=3,
    xtick={0,5,10,15,20},
    ztick={-15, -10, -5, 0, 5},
    zticklabels={$10^{-15}$,$10^{-10}$,$10^{-5}$,$10^0$,$10^5$},
    xlabel=$i$,
    ylabel=$\wavenumber$,
    zlabel=$\sigma_{i+1}$,
    colormap/viridis,
]
\addplot3 [%
  surf,
  domain=0:15,
  y domain=0:50,
] file {helmholtz_3d.dat};
\addplot3[domain=0:50, red, thick] (x/3.141592, x, 0);
%
%
\end{axis}
\end{tikzpicture}
\fi
\includegraphics{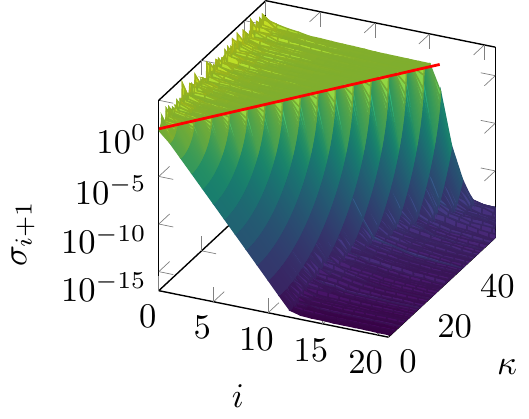}
\end{subfigure}
\caption{\footnotesize
Singular value decay for {\it Example 2}.
The red line at $i=\wavenumber/\pi$ in the right plot marks the observed length of the plateau.
}
\label{fig:helmholtz_svd}
\vspace{-15pt}
\end{figure}

\begin{figure}
\centering
\begin{subfigure}[t]{0.55\textwidth}
\centering
\begin{tikzpicture}
\begin{semilogyaxis}[
    width=5cm,
    height=5cm,
    xmin=-1,
    xmax=21,
    ymin=1e-17,
    ymax=1e4,
    xlabel=basis size $n$,
    ylabel=$\norm{T - P_{R^{n}}T}$,
    grid=both,
    grid style={line width=.1pt, draw=gray!20},
    major grid style={line width=.2pt,draw=gray!50},
    minor xtick={1,2,3,4,5,6,7,8,9,10,11,12,13,14,15,16,17,18,19,20},
    minor ytick={1e-15, 1e-14, 1e-13, 1e-12, 1e-11, 1e-10, 1e-9, 1e-8, 1e-7, 1e-6, 1e-5, 1e-4, 1e-3, 1e-2, 1e-1, 1e0, 1e1, 1e2, 1e3},
    ytick={1e0, 1e-5, 1e-10, 1e-15},
    max space between ticks=25,
    legend style={at={(1.45,1.45)},anchor=north east},
  ]
  \addplot+[densely dotted, blue,  mark=o, mark options={solid}] table[x expr=\coordindex,y index=5] {helmholtz_percentiles.dat};
  \addplot+[densely dashed, blue,  mark=diamond, mark options={solid}] table[x expr=\coordindex,y index=4] {helmholtz_percentiles.dat};
  \addplot+[densely dashed, blue,  mark=x, mark options={solid}] table[x expr=\coordindex,y index=3] {helmholtz_percentiles.dat};
  \addplot+[solid,          black, thick, mark=x] table[x expr=\coordindex,y index=2] {helmholtz_percentiles.dat};
  \addplot+[densely dashed, gruen, mark=x, mark options={solid}] table[x expr=\coordindex,y index=1] {helmholtz_percentiles.dat};
  \addplot+[densely dotted, gruen, mark=o, mark options={solid}] table[x expr=\coordindex,y index=0] {helmholtz_percentiles.dat};
  \addplot+[solid, red, mark=0, thick] table[x index=0, y index=4] {helmholtz_selected.dat};
  \legend{max, 99 percentile, 75 percentile, 50 percentile, 25 percentile, min, $\sigma_{i+1}$};
\end{semilogyaxis}
\end{tikzpicture}
\subcaption{\footnotesize Percentiles, worst case, and best case}
\label{fig:helmholtz deviation percentiles}
\end{subfigure}
\begin{subfigure}[t]{0.4\textwidth}
\centering
\begin{tikzpicture}
\begin{semilogyaxis}[
    width=5cm,
    height=5cm,
    xmin=-1,
    xmax=21,
    ymin=1e-17,
    ymax=1e4,
    xlabel=basis size $n$,
    yticklabels={,,},
    grid=both,
    grid style={line width=.1pt, draw=gray!20},
    major grid style={line width=.2pt,draw=gray!50},
    minor xtick={1,2,3,4,5,6,7,8,9,10,11,12,13,14,15,16,17,18,19,20},
    minor ytick={1e-16, 1e-15, 1e-14, 1e-13, 1e-12, 1e-11, 1e-10, 1e-9, 1e-8, 1e-7, 1e-6, 1e-5, 1e-4, 1e-3, 1e-2, 1e-1, 1e0, 1e1, 1e2, 1e3},
    ytick={1e0, 1e-5, 1e-10, 1e-15},
    max space between ticks=25,
    legend style={at={(1.1,1.35)},anchor=north east},
  ]
  \addplot+[] table[x index=0, y index=1]{helmholtz_expectation.dat};
  \addplot+[solid,          black, thick, mark=x] table[x expr=\coordindex,y index=0] {helmholtz_means.dat};
  \legend{a priori limit for mean, mean};
\end{semilogyaxis}
\end{tikzpicture}
\subcaption{\footnotesize Mean of deviation and a priori limit}
\label{fig:helmholtz deviation mean}
\end{subfigure}
\vspace{-10pt}
\caption{\footnotesize Projection 
error $\sup_{\xi \in S} \inf_{\zeta \in R^{n}} \frac{\| T\xi - \zeta \|_{R}}{\|\xi \|_{S}} = \norm{T - P_{R^{n}}T}$
over basis size $n$ for {\it Example 2} with $\wavenumber=30$.
Meshsize $h=1/160$.\\
}
\label{fig:helmholtz_a_priori}
\vspace{-15pt}
\end{figure}

\begin{figure}
\centering
\begin{subfigure}[t]{0.54\textwidth}
\centering
\begin{tikzpicture}
\begin{loglogaxis}[
    width=6cm,
    height=4cm,
    xmin=1e-13,
    xmax=1e4,
    x dir=reverse,
    ymin=1e-15,
    ymax=1e1,
    xlabel=target error $\algotol$,
    ylabel=$\norm{T - P_{R^{n}}T}$,
    grid=both,
    grid style={line width=.1pt, draw=gray!20},
    major grid style={line width=.2pt,draw=gray!50},
    minor xtick={1e-15, 1e-14, 1e-13, 1e-12, 1e-11, 1e-10, 1e-9, 1e-8, 1e-7, 1e-6, 1e-5, 1e-4, 1e-3, 1e-2, 1e-1, 1e0, 1e1, 1e2, 1e3},
    minor ytick={1e-15, 1e-14, 1e-13, 1e-12, 1e-11, 1e-10, 1e-9, 1e-8, 1e-7, 1e-6, 1e-5, 1e-4, 1e-3, 1e-2, 1e-1, 1e0, 1e1, 1e2, 1e3},
    xtick={1e3, 1e0, 1e-3, 1e-6, 1e-9, 1e-12},
    ytick={1e3, 1e0, 1e-3, 1e-6, 1e-9, 1e-12},
    max space between ticks=25,
    legend style={at={(1.2,1.75)},anchor=north east},
  ]
  \addplot+[densely dotted, blue,  thick, mark=none] table[x index=0,y index=5] {helmholtz_adaptive_convergence.dat};
  \addplot+[densely dashed, blue,  thick, mark=none] table[x index=0,y index=4] {helmholtz_adaptive_convergence.dat};
  \addplot+[solid,          black, thick, mark=none] table[x index=0,y index=3] {helmholtz_adaptive_convergence.dat};
  \addplot+[densely dashed, red,   thick, mark=none] table[x index=0,y index=2] {helmholtz_adaptive_convergence.dat};
  \addplot+[densely dotted, red,   thick, mark=none] table[x index=0,y index=1] {helmholtz_adaptive_convergence.dat};
    \addplot+[solid, grau, thick, mark=o,mark repeat=10] table[x index=0,y index=0] {helmholtz_adaptive_convergence.dat};
  \legend{max, 75 percentile, 50 percentile, 25 percentile, min,$y=x$};
\end{loglogaxis}
\end{tikzpicture}
\subcaption{\footnotesize Quartiles for 10 test vectors.}
\label{fig:helmholtz adaptive performance}
\end{subfigure}
\begin{subfigure}[t]{0.44\textwidth}
\centering
\begin{tikzpicture}
\begin{loglogaxis}[
    width=6cm,
    height=4cm,
    xmin=1e-13,
    xmax=1e4,
    x dir=reverse,
    ymin=1e-15,
    ymax=1e1,
    xlabel=target error $\algotol$,
    yticklabels={,,},
    grid=both,
    grid style={line width=.1pt, draw=gray!20},
    major grid style={line width=.2pt,draw=gray!50},
    minor xtick={1e-15, 1e-14, 1e-13, 1e-12, 1e-11, 1e-10, 1e-9, 1e-8, 1e-7, 1e-6, 1e-5, 1e-4, 1e-3, 1e-2, 1e-1, 1e0, 1e1, 1e2, 1e3},
    minor ytick={1e-15, 1e-14, 1e-13, 1e-12, 1e-11, 1e-10, 1e-9, 1e-8, 1e-7, 1e-6, 1e-5, 1e-4, 1e-3, 1e-2, 1e-1, 1e0, 1e1, 1e2, 1e3},
    xtick={1e3, 1e0, 1e-3, 1e-6, 1e-9, 1e-12},
    ytick={1e3, 1e0, 1e-3, 1e-6, 1e-9, 1e-12},
    legend style={at={(1.2,1.75)},anchor=north east},
  ]
  \addplot+[mark=square,      mark repeat=10, thick] table[x index=0,y index=1] {helmholtz_adaptive_num_testvecs.dat};
  \addplot+[mark=o     , mark repeat=12, thick,solid] table[x index=0,y index=2] {helmholtz_adaptive_num_testvecs.dat};
  \addplot+[mark=none,   mark repeat=14, thick,solid] table[x index=0,y index=3] {helmholtz_adaptive_num_testvecs.dat};
  \addplot+[mark=triangle,  mark repeat=16, thick,solid] table[x index=0,y index=4] {helmholtz_adaptive_num_testvecs.dat};
  \addplot+[mark=diamond, mark repeat=18, thick,solid] table[x index=0,y index=5] {helmholtz_adaptive_num_testvecs.dat};
  \addplot+[mark=x,      mark repeat=20, thick,solid] table[x index=0,y index=6] {helmholtz_adaptive_num_testvecs.dat};
    \addplot+[densely dashed,grau,thick,mark=none] table[x index=0,y index=0] {helmholtz_adaptive_num_testvecs.dat};
  \legend{
    $n_t = 3$,
    $n_t = 5$,
    $n_t = 10$,
    $n_t = 20$,
    $n_t = 40$,
    $n_t = 80$,
    $y=x$,
  };
\end{loglogaxis}
\end{tikzpicture}
\subcaption{\footnotesize Maximum error for given number of test vectors.}
\label{fig:helmholtz number of testvecs}
\end{subfigure}
\vspace{-10pt}
\caption{\footnotesize Projection
error $\sup_{\xi \in S} \inf_{\zeta \in R^{n}} \frac{\| T\xi - \zeta \|_{R}}{\|\xi \|_{S}} = \norm{T - P_{R^{n}}T}$
over target projection error for {\it Example 2} with $\wavenumber=30$.
Meshsize $h=1/160$.
}
\label{fig:helmholtz_interface_adaptive_quartiles}
\vspace{-15pt}
\end{figure}

For $\wavenumber=0$ we obtain {\it Example 1}.
We observe that the singular values of the transfer operator
first have a plateau and then decay exponentially,
see \cref{fig:helmholtz_svd}.
The longer the plateau, the faster is the exponential decay.
The length of the plateau is observed to be very close
to the length of the inner interface divided by 
a half wavelength, i.e.~ $1/(\lambda / 2) = \wavenumber / \pi$.
Comparing this with the
analysis of Finite Element methods for
the Helmholtz equation
(cf.~\cite{ihlenburg2006finite}),
one finds this similar to the
``minimal resolution condition''
$1/h \geq \sqrt{12}/\wavenumber$.

Algorithm \ref{algo:adaptive_range_approximation}
succeeds to generate reduced spaces $R^n$
which achieve a projection error $\norm{T-P_{R^n}}$
which is close the the optimal projection error
given by the singular values of the transfer operator.
We show results for $\wavenumber=30$ in \cref{fig:helmholtz_a_priori}.
Also in the adaptive case, we observe the expected behavior, see \cref{fig:helmholtz adaptive performance}
and \cref{fig:helmholtz number of testvecs}.
The plateaus which can be observed in \cref{fig:helmholtz adaptive performance}
are due to the very fast decay of the singular values.
E.g.~the first plateau is at an error of about $10^{-3}$, which
is the error usually achieved at a basis size of 10 (cf.~\cref{fig:helmholtz deviation percentiles}).
The next plateau at an error of about $10^{-7}$ corresponds to a basis size of 11.
\subsection{A transfer operator with slowly decaying singular values; application to linear elasticity}\label{subsect:linearelasticity}
In this subsection we numerically analyze Algorithm \ref{algo:adaptive_range_approximation} and the theoretical findings of \cref{sect:randomized_la} for a numerical test case {\it Example 3} where the singular values of the transfer operator exhibit a relatively slow decay and $N$, $N_{S}$, and $N_{R}$ are relatively large. Moreover, we shortly illustrate that Algorithm \ref{algo:adaptive_range_approximation} is attractive from a computational viewpoint and with respect to memory requirement. 

To that end let $ \Omega_{in} = (-0.5,0.5) \times (-0.5,0.5) \times (-0.5,0.5)$ be the subdomain on which we aim to construct a local approximation space, $\Omega = (-2,2) \times (-0.5,0.5) \times (-2,2)$ the (oversampling) domain, and $\Gamma_{out}=\{-2,2\} \times (-0.5,0.5) \times (-2,2) \cup (-2,2) \times (-0.5,0.5) \times \{-2,2\}$ the outer boundary. On $\partial\Omega \setminus \Gamma_{out}$ we prescribe homogeneous Neumann boundary conditions and we suppose that $\Omega$ does not border the Dirichlet boundary of $\Omega_{gl}$. We assume that $\Omega$ represents an isotropic homogeneous material and we consider the equations of linear elasticity. Therefore, we choose $\mathcal{X}=[H^{1}(\Omega)]^{3}$, $\mathcal{X}_{0}=\{ v \in [H^{1}(\Omega)]^{3}\,:\, v|_{\Gamma_{out}} = 0\}$, $
\mathcal{A}: \mathcal{X} \rightarrow \mathcal{X}_{0}'$, $\mathcal{A}u = - \nabla C : \varepsilon(u)$ for $u \in \mathcal{X}$ and consider the following boundary value problem: Find $u \in \mathcal{X}$ such that
\begin{equation}\label{eq:PDE linear elasticity}
\mathcal{A}u = 0 \quad \text{in} \enspace \mathcal{X}_{0}'
\end{equation}
with arbitrary Dirichlet boundary conditions on $\Gamma_{out}$. Here, we set Young's modulus equal to one, $C$ is the fourth-order stiffness tensor
\begin{equation*}
C_{ijkl} = \frac{\nu}{(1+\nu)(1 -2\nu)}\delta_{ij}\delta_{kl} + \frac{1}{2(1 + \nu)}(\delta_{ik}\delta_{jl} + \delta_{il}\delta_{jk}), \quad 1 \leq i,j,k,l \leq 3,
\end{equation*}
where $\delta_{ij}$ denotes the Kronecker delta, and we choose Poisson's ratio $\nu =0.3$.
Moreover, $\varepsilon(u) = 0.5 (\nabla u + (\nabla u)^{T})$ is the infinitesimal strain tensor and the colon operator $:$ is defined as $C : \varepsilon(u) = \sum_{k,l=1}^{3}C_{ijkl}\varepsilon_{kl}(u)$. 
\begin{figure}
\centering
\begin{subfigure}[t]{0.4\textwidth}
\centering
\begin{tikzpicture}
\begin{semilogyaxis}[
    width=4.5cm,
    height=4cm,
    xmin=2,
    xmax=300,
    ymin=1e-9,
    ymax=150,
    xlabel=$k$,
    ylabel=$\mathbb{E}(\|T-P_{R^{k+p}}T\|)$,
    grid=both,
    grid style={line width=.1pt, draw=gray!20},
    major grid style={line width=.2pt,draw=gray!50},
    minor xtick={2,25,50,75,100,125,150,175,200,225,250,275,300,325,350},
    minor ytick={1e-9, 1e-8, 1e-7, 1e-6, 1e-5, 1e-4, 1e-3, 1e-2, 1e-1, 1e0, 1e1, 1e2},
    xtick={2,100,200,300},
    ytick={1e2, 1e0, 1e-2, 1e-4, 1e-6, 1e-8},
    max space between ticks=25,
    legend style={at={(1.1,1.55)},anchor=north east},
  ]
  \addplot+[solid, black, thick, mark=o, mark repeat=50] table[x index=0,y index=2] {elasticity_cube_plate_lambda.dat};
    \addplot+[solid,          gruen, thick, mark=square,mark repeat=50] table[x index=0,y index=1] {elasticity_cube_plate_a_priori_bound.dat};
  \addplot+[solid, blue, thick, mark=none] table[x index=0,y index=2] {elasticity_cube_plate_a_priori_bound.dat};
  \addplot+[densely dotted, red, thick, mark=none] table[x index=0,y index=1] {elasticity_cube_plate_lambda.dat};
  \addplot+[densely dashed, gelb, thick, mark=none] table[x index=0,y index=1] {elasticity_cube_plate_error.dat};
\end{semilogyaxis}
\end{tikzpicture}
\subcaption{\footnotesize Convergence behavior on $\Omega$}
\label{fig: linear elasticity a priori 1}
\end{subfigure}
\begin{subfigure}[t]{0.55\textwidth}
\centering
\begin{tikzpicture}
\begin{semilogyaxis}[
    width=4.5cm,
    height=4cm,
    xmin=2,
    xmax=200,
    ymin=1e-9,
    ymax=150,
    xlabel=$k$,
    yticklabels={,,},
    grid=both,
    grid style={line width=.1pt, draw=gray!20},
    major grid style={line width=.2pt,draw=gray!50},
    minor xtick={2,25,50,75,100,125,150,175,200},
    minor ytick={1e-9, 1e-8, 1e-7, 1e-6, 1e-5, 1e-4, 1e-3, 1e-2, 1e-1, 1e0, 1e1, 1e2},
    xtick={2,50,100,150,200},
    ytick={1e2, 1e0, 1e-2, 1e-4, 1e-6, 1e-8},
    max space between ticks=25,
    legend pos=outer north east,
  ]
 \addplot+[solid, black, thick, mark=o,mark repeat=50] table[x index=0,y index=2] {elasticity_plate_lambda.dat};
   \addplot+[solid,          gruen, thick, mark=square, mark repeat=50] table[x index=0,y index=1] {elasticity_plate_a_priori_bound.dat};
  \addplot+[solid, blue, thick, mark=none] table[x index=0,y index=2] {elasticity_plate_a_priori_bound.dat};
  \addplot+[densely dotted, red, thick, mark=none] table[x index=0,y index=1] {elasticity_plate_lambda.dat};
  \addplot+[densely dashed, gelb, thick, mark=none] table[x index=0,y index=1] {elasticity_plate_error.dat};
  \legend{$\sigma_{k+1}$, a priori, sc. a priori, $\sqrt{k}\sigma_{k+1}$,  $\mathbb{E}(\|T-P_{R^{k+p}}T\|)$};
\end{semilogyaxis}
\end{tikzpicture}
\subcaption{\footnotesize Convergence behavior on $\widehat{\Omega}$.}
\label{fig: linear elasticity a priori 2}
\end{subfigure}
\vspace{-10pt}
\caption{\footnotesize Comparison of the convergence behavior of $\sigma_{k+1}$, $\sqrt{k}\sigma_{k+1}$, $\mathbb{E}(\|T-P_{R^{k+p}}T\|)$, the a priori error bound as introduced in \eqref{eq:a priori mean}, and the a priori error bound of \eqref{eq:a priori mean} scaled with a constant such that its value for $k=2$ equals the one of $\mathbb{E}(\|T-P_{R^{k+p}}T\|)$ (sc. a priori) for increasing $k$ for and $p=2$ for the oversampling domains $\Omega$ (a) and $\widehat{\Omega}$ (b).}
\label{fig:linear elasticity a priori}
\vspace{-15pt}
\end{figure}
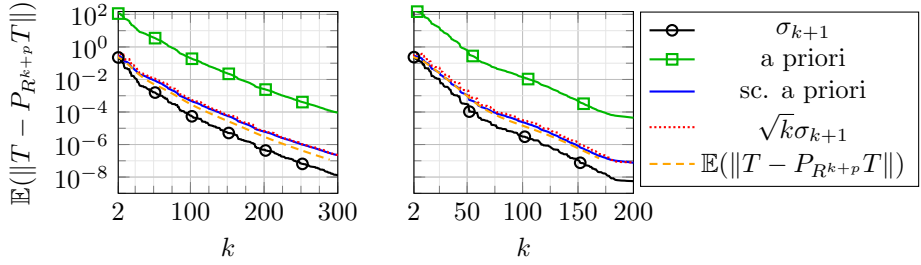

For the FE discretization we use a regular mesh with hexahedral elements and a mesh size $h=0.1$ in each space direction and a corresponding FE space $X$ with linear FE resulting in 
$\dim(X)=N=55473$, $\dim(R)=N_{R}=3987$, and $\dim(S)=N_{S}=5280$. Note that although in theory we should subtract the orthogonal projection on the six rigid body motions from the FE basis functions, in actual practice we avoid that by subtracting the orthogonal projection from the $A$-harmonic extensions only. Finally, we equip the source space $S$ with the $L^{2}$-inner product and the range space $R$ with the energy inner product
\begin{equation*}
(w,v)_{R} := \int_{\Omega_{in}} \frac{\partial w^{i}}{\partial x_{j}} C_{ijkl} \frac{\partial v^{k}}{\partial x_{l}} dx.  
\end{equation*}

Analyzing the convergence behavior of $\mathbb{E}(\|T - P_{R^{k+p}}T\|)$ for a growing number of randomly generated basis functions $k$ and a (fixed) oversampling parameter $p=2$ in \cref{fig: linear elasticity a priori 1} we observe that the local approximation spaces generated as proposed in \cref{sect:randomized_la} yield an approximation that converges nearly with the optimal rate $\sigma_{k+1}$. Moreover, we see in \cref{fig: linear elasticity a priori 1} that the a priori error bound as proposed in \cref{eq:a priori mean} reproduces the convergence behavior of $\mathbb{E}(\|T - P_{R^{k+p}}T\|)$ quite well, where the mean of the deviation converges slightly faster than the a priori error bound. Furthermore, for the present test case the a priori error bound seems to behave like $\sqrt{k}\sigma_{k+1}$, arguing that the latter might be the dominating factor. Finally, we observe that the a priori bound is rather pessimistic as it overestimates $\mathbb{E}(\|T - P_{R^{k+p}}T\|)$ by a factor of more than $100$. This is mainly due to the square root of the conditions of the inner product matrices which amount to $(\lambda_{max}^{\underline{M}_{R}}/\lambda_{min}^{\underline{M}_{R}})^{1/2}\approx 17.3197$ and $(\lambda_{max}^{\underline{M}_{S}}/\lambda_{min}^{\underline{M}_{S}})^{1/2}\approx 3.4404$. 

If we consider a flatter domain $\widehat{\Omega}=(-2,2)\times (-0.25,0.25) \times (-2,2)$ and a flatter subdomain $\widehat{\Omega}_{in}=(-0.5,0.5)\times (-0.25,0.25) \times (-0.5,0.5)$ instead, where $\widehat{\Gamma}_{out}=\{-2,2\} \times (-0.25,0.25) \times (-2,2) \cup (-2,2) \times (-0.25,0.25) \times \{-2,2\}$ and we still consider the same PDE and the same inner products as above, we observe in \cref{fig: linear elasticity a priori 2} that until $k\approx 75$ the a priori bound reproduces the convergence behavior of $\mathbb{E}(\|T - P_{R^{k+p}}T\|)$ perfectly.
We may thus conclude that the a priori bound in \cref{eq:a priori mean} seems to be sharp regarding the convergence behavior of $\mathbb{E}(\|T-P_{R^{k+p}}T\|)$ in the basis size $k$.
The a priori estimates could be improved slightly by finding the optimal oversampling size $p$, which was
fixed to its mimimum value of $2$ in this experiment. Expecially on the domain $\Omega$, where the singular
values of the transfer operator show a slower decay, a larger oversampling would be beneficial.
For the computations on $\widehat{\Omega}$ we employed again a regular hexaedral mesh with $h=0.1$ and linear FE with $N=30258$, $N_{R}=2172$, and $N_{S}=2880$. Finally, for all results in this subsection we computed the statistics over $1000$ samples. From now on all results are computed on $\Omega$.
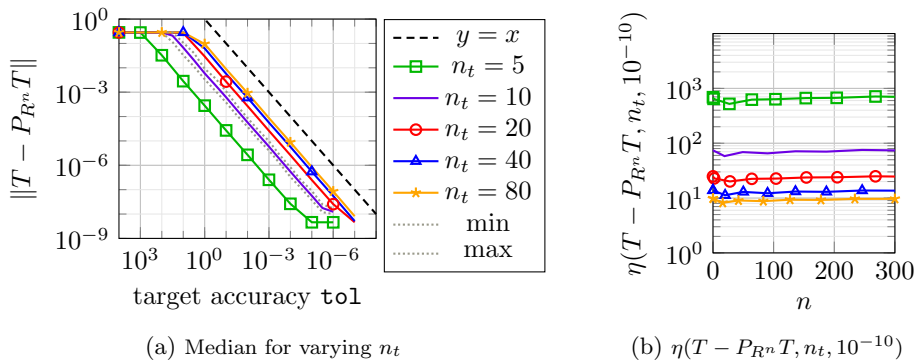
\begin{figure}
\centering
\begin{subfigure}[t]{0.59\textwidth}
\centering
\begin{tikzpicture}
\begin{loglogaxis}[
    width=5cm,
    height=4.5cm,
    xmin=1e-8,
    xmax=1e4,
    x dir=reverse,
    ymin=1e-9,
    ymax=1e0,
    xlabel=target accuracy \texttt{tol},
    ylabel=$\|T-P_{R^{n}}T\|$,
    grid=both,
    grid style={line width=.1pt, draw=gray!20},
    major grid style={line width=.2pt,draw=gray!50},
    minor xtick={1e-15, 1e-14, 1e-13, 1e-12, 1e-11, 1e-10, 1e-9, 1e-8, 1e-7, 1e-6, 1e-5, 1e-4, 1e-3, 1e-2, 1e-1, 1e0, 1e1, 1e2, 1e3},
    minor ytick={1e-15, 1e-14, 1e-13, 1e-12, 1e-11, 1e-10, 1e-9, 1e-8, 1e-7, 1e-6, 1e-5, 1e-4, 1e-3, 1e-2, 1e-1, 1e0, 1e1, 1e2, 1e3},
    xtick={1e3,1e0, 1e-3, 1e-6},
    ytick={1e3, 1e0, 1e-3, 1e-6, 1e-9, 1e-12},
    legend pos=outer north east,
  ]
  \addplot+[densely dashed,black,thick,mark=none] table[x index=0,y index=0] {elasticity_test_vectors_y_x.dat};
  \addplot+[solid,gruen,thick,mark=square] table[x index=0,y index=1] {elasticity_test_vectors_r_fuenf.dat};
  \addplot+[solid,lila,thick,mark=none] table[x index=0,y index=1] {elasticity_test_vectors_r_zehn.dat};
 \addplot+[solid,red,thick,mark=o, mark repeat=5] table[x index=0,y index=1] {elasticity_test_vectors_r_rest.dat};
  \addplot+[solid,blue,thick,mark=triangle, mark repeat=3] table[x index=0,y index=2] {elasticity_test_vectors_r_rest.dat};
  \addplot+[solid,gelb,thick,mark=star, mark repeat=2] table[x index=0,y index=3] {elasticity_test_vectors_r_rest.dat};
    \addplot+[densely dotted, grau, thick, mark=none] table[x index=0,y index=1] {elasticity_adaptive_prctile.dat};
  \addplot+[densely dotted, grau, thick, mark=none] table[x index=0,y index=5] {elasticity_adaptive_prctile.dat};
  \legend{
    $y=x$,
    $n_t=5$,
    $n_t=10$,
    $n_t=20$,
    $n_t=40$,
    $n_t=80$,
    min,
    max,
  };
\end{loglogaxis}
\end{tikzpicture}
\subcaption{\footnotesize Median for varying $n_{t}$}
\label{fig: linear elasticity adaptive algorithm 2}
\end{subfigure}
\begin{subfigure}[t]{0.4\textwidth}
\centering
\begin{tikzpicture}
\begin{semilogyaxis}[
    width=4cm,
    height=4.5cm,
    xmin=0,
    xmax=300,
    ymin=1,
    ymax=10000,
    xlabel=$n$,
    ylabel={$\eta(T-P_{R^{n}}T,n_{t},10^{-10})$},
    grid=both,
    grid style={line width=.1pt, draw=gray!20},
    major grid style={line width=.2pt,draw=gray!50},
    xtick={0,100,200,300},
    ytick={1,10,100,1000},
 legend style={at={(1.1,1.55)},anchor=north east},
  ]
  \addplot+[solid,gruen,thick,mark=square] table[x index=0,y index=1] {elasticity_effectivity_varying_n_r_fuenf.dat};
  \addplot+[solid,lila,thick,mark=none] table[x index=0,y index=1] {elasticity_effectivity_varying_n_r_zehn.dat};
  \addplot+[solid,red,thick,mark=o] table[x index=0,y index=1] {elasticity_effectivity_varying_n_r_zwanzig.dat};
  \addplot+[solid,blue,thick,mark=triangle] table[x index=0,y index=1] {elasticity_effectivity_varying_n_r_vierzig.dat};
  \addplot+[solid,gelb,thick,mark=star] table[x index=0,y index=1] {elasticity_effectivity_varying_n_r_achtzig.dat};
\end{semilogyaxis}
\end{tikzpicture}
\subcaption{\footnotesize $\eta(T-P_{R^{n}}T,n_{t},10^{-10})$}
\label{fig: linear elasticity a posteriori error estimator 1}
\end{subfigure}
\vspace{-10pt}
\caption{\footnotesize %
Median of the projection error $\|T-P_{R^{n}}T\|$ for a decreasing target accuracy \texttt{tol} for a varying number of test vectors $n_{t}$ and the minimal and maximal values for $n_t=10$ (a). Behavior of the median of the effectivity $\eta(T-P_{R^{n}}T,n_{t},10^{-10})$ as defined in \cref{eq:effectivity} for growing $n$ and various number of test vectors $n_{t}$ (b).}
\vspace{-15pt}
\end{figure}

Regarding the performance of Algorithm \ref{algo:adaptive_range_approximation} we first observe in \cref{fig: linear elasticity adaptive algorithm 2} that the actual error $\|T-P_{R^{n}}T\|$ lies below the target tolerance \texttt{tol} for all $1000$ samples for $n_t=10$; which holds also true for all other considered values of $n_t$. Here, we prescribe $\varepsilon_\mathrm{algofail} = 10^{-10}$ and use $3993$ as an upper bound for $N_{T}$ throughout this subsection. %
Compared with the performance of Algorithm \ref{algo:adaptive_range_approximation} for {\it Example 1} in \cref{fig:adaptive performance} the dispersion in \cref{fig: linear elasticity adaptive algorithm 2} is much smaller. This may be explained by the much faster decay of the singular values of $T$ and therefore $\|T-P_{R^{n}}T\|$ in \cref{subsect:analytic} compared with the present test case.

Similarly to \cref{fig:number of testvecs} and \cref{fig:helmholtz number of testvecs} in \cref{subsect:analytic} and \cref{subsect:helmholtz} we see in \cref{fig: linear elasticity adaptive algorithm 2} that increasing the number of test vectors $n_{t}$ from $5$ to $10$ or from $10$ to $20$ increases the ratio between the median of the actual error $\|T-P_{R^{n}}T\|$ and the target accuracy $\algotol$ significantly --- for the former by more than one magnitude --- while an increase from $n_{t}=40$ to $n_{t}=80$ has hardly any influence. This can be explained by the scaling of the effectivity of the employed a posteriori error estimator defined in \cref{eq:effectivity} which we will elaborate on shortly. Regarding the choice of $n_t$ it seems that for the present test case a value of about $20$ is in the sweet spot.
We summarize and emphasize that also in the present test case, where we have a rather slow convergence of the singular values of $T$ and thus the error $\|T-P_{R^{n}}T\|$, we need only very few local solutions in addition to the optimal amount required, demonstrating that Algorithm \ref{algo:adaptive_range_approximation} performs nearly optimally in terms of computational complexity for the current problem. This is due both to the nearly optimal convergence behavior as discussed above and the good effectivity of the a posteriori error estimator $\Delta(T-P_{R^{n}}T,n_{t},\varepsilon_{\mathrm{testfail}})$ also for few numbers of test vectors $n_{t}$, which will be addressed next.

Analyzing the effectivity $\eta(T-P_{R^{n}}T,n_{t},10^{-10})$ as defined in \cref{eq:effectivity} for  $\varepsilon_{\mathrm{testfail}}=10^{-10}$ and growing $n$ we see in \cref{fig: linear elasticity a posteriori error estimator 1} that for $n_{t} \geq 20$ the effectivity is in the order of $10$ and the a posteriori error estimator $\Delta(T-P_{R^{n}}T,n_{t},\varepsilon_{\mathrm{testfail}})$ therefore provides a sharp bound also for this test case. Moreover, the decrease of the effectivity for growing $n_{t}$ as can be observed in \cref{fig: linear elasticity a posteriori error estimator 1} explains the increase of the ratio between the median of the actual error $\|T-P_{R^{n}}T\|$ and the target accuracy $\algotol$ in \cref{fig: linear elasticity adaptive algorithm 2}. Finally, the effectivity varies only very slightly if $n$ changes and we may thus confirm that, as expected, the effectivity does not seem to depend on the basis size $n$. 

\subsection{Building a global approximation with the GFEM}\label{subsect:GFEM}
In order to successfully apply the proposed algorithm in the context of a method, 
it has to be possible to define a transfer operator with quickly
decaying spectrum.
Moreover, allowing to bound the local approximation error  
in terms of $\norm{T - P_{R^n}T}$, and bound
the global approximation error in terms of the local error contributions,
is  sufficient (but not necessary) to yield a global error decaying as $\sqrt{n}\sigma_{n+1}$ or better. All of this is possible for the GFEM, which is why we employ this method in this subsection to build a global approximation from the local reduced spaces generated by Algorithm \ref{algo:adaptive_range_approximation}.
We refer to this numerical example as {\it Example 4}.
The convergence theory for the GFEM with randomized basis generation is given in the supplementary materials subsection SM5.2.

\label{sec:gfem description}
On $\Omega_{gl} = (0,1)^2$ we consider the following PDE: Find $u_{gl} \in X_{gl}=\{ v \in H^{1}(\Omega_{gl})\, : \, v = 0 \enspace \text{on} \enspace \Sigma_{D}\}$, such that
\begin{equation}
\label{eq:full problem}
-\divergence (k \nabla u_{gl}) = f \quad \text{in} \enspace X_{gl}',
\end{equation}
where $k \in L^{\infty}(\Omega_{gl})$, $0< k_{0} \leq k \leq k_{1} < \infty$
and $f(\varphi) := \int_{\Omega_{gl}} \widehat f \varphi \ dx$
for a source term $\widehat f \in L^2(\Omega_{gl})$.
We use the GFEM to compute an approximation of $u_{gl}$. To this end let
$\{\omega_{i}\}_{i=1}^{m}$
be an open cover of $\Omega_{gl}$ such that $\Omega_{gl}=\cup_{i=1}^{m}\omega_{i}$. 
For each domain, we define a local space
$
X_i := \big\{v|_{\omega_i} \ \big| \ v \in X_{gl} \big\}.
$
We will construct local reduced spaces $R^{n}_i \subset X_i$ and the global GFEM space
$
X_\mathrm{GFEM} := \bigoplus_{i=1,\dots,m} \big\{ \varrho_i v_i \ \big| \ v_i \in R^{n}_i \big\},
$
where $\varrho_i$ is a suitably defined partition of unity (see the supplementary materials subsection SM5.2).
The GFEM solution $u_\mathrm{GFEM} \in X_\mathrm{GFEM}$ is then defined as the solution of
$
-\divergence (k \nabla u_\mathrm{GFEM}) = f \quad \text{in} \enspace X_\mathrm{GFEM}'.
$
\label{sec:gfem basis generation}
\begin{figure}
\begin{minipage}[b]{0.25\textwidth}
\centering
\includegraphics[width=0.9\textwidth,frame]{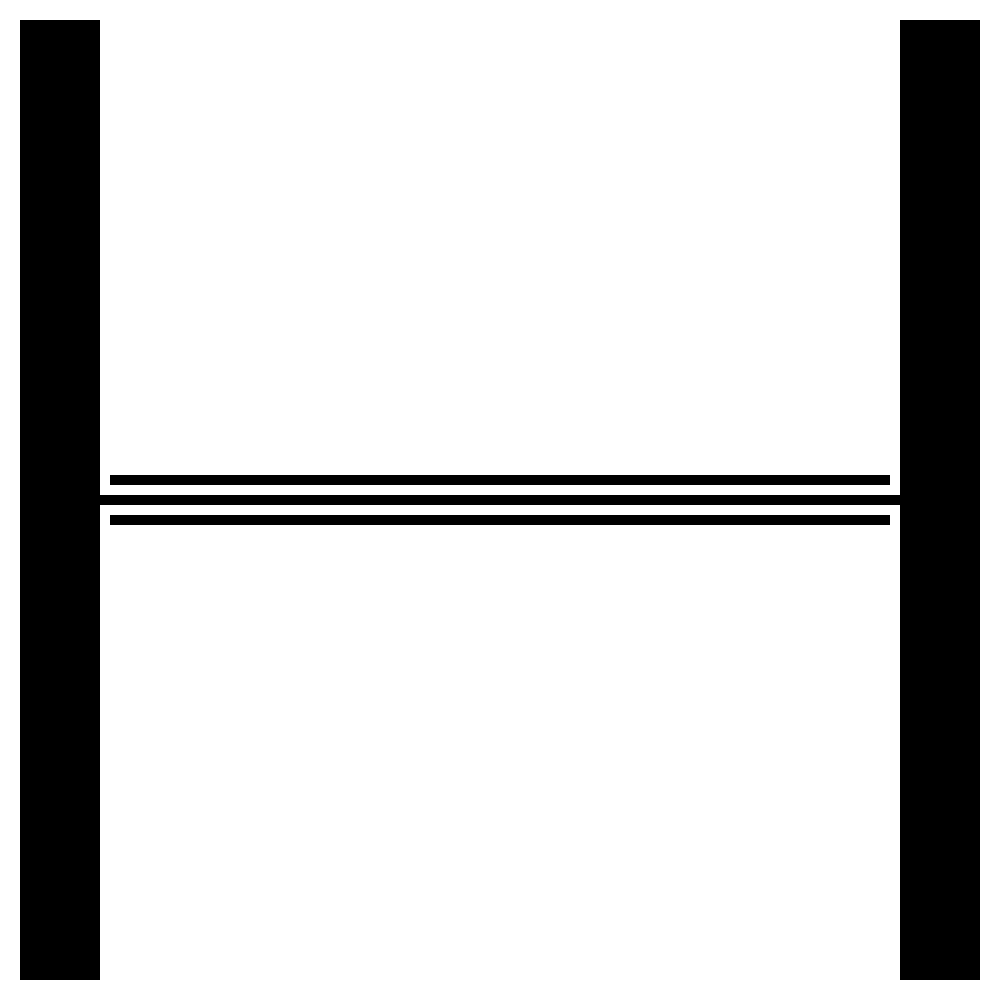}
\caption{\footnotesize Coefficient field for GFEM Example 2; white equates to 1 and black to $10^5$.
}
\label{fig:example 2}
\end{minipage}
\quad
\begin{minipage}[b]{0.7\textwidth}
\centering
\begin{tikzpicture}
\begin{semilogyaxis}[
    title={\it Example 4.1},
    width=0.52\textwidth,
    height=4cm,
    xmin=0,
    xmax=80,
    ymin=1e-13,
    ymax=1e10,
    xlabel=local basis size $n$,
    grid=both,
    grid style={line width=.1pt, draw=gray!20},
    major grid style={line width=.2pt,draw=gray!50},
    minor ytick={1e-15, 1e-14, 1e-13, 1e-12, 1e-11, 1e-10, 1e-9, 1e-8, 1e-7, 1e-6, 1e-5, 1e-4, 1e-3, 1e-2, 1e-1, 1e0, 1e1, 1e2, 1e3, 1e4, 1e5, 1e6, 1e7, 1e8, 1e9, 1e10},
    ytick={1e9,1e6,1e3, 1e0, 1e-3, 1e-6, 1e-9, 1e-12,1e-15},
    max space between ticks=25,
  ]
  \addplot+[thick, mark=triangle, mark repeat=10] table[x expr=\coordindex,y index=0]{maxsvd_poisson.dat};
  \addplot+[solid,          black, thick, mark=o, mark repeat=10] table[x index=0, y index=3] {local_global_errors_poisson.dat};
  \addplot+[solid,          red, thick, mark=x, mark repeat=10] table[x index=0, y index=8] {local_global_errors_poisson.dat};
\end{semilogyaxis}
\end{tikzpicture}
\begin{tikzpicture}
\begin{semilogyaxis}[
    title={\it Example 4.2},
    width=0.52\textwidth,
    height=4cm,
    xmin=0,
    xmax=80,
    ymin=1e-13,
    ymax=1e10,
    xlabel=local basis size $n$,
    yticklabels={,,},
    grid=both,
    grid style={line width=.1pt, draw=gray!20},
    major grid style={line width=.2pt,draw=gray!50},
    minor ytick={1e-15, 1e-14, 1e-13, 1e-12, 1e-11, 1e-10, 1e-9, 1e-8, 1e-7, 1e-6, 1e-5, 1e-4, 1e-3, 1e-2, 1e-1, 1e0, 1e1, 1e2, 1e3, 1e4, 1e5, 1e6, 1e7, 1e8, 1e9, 1e10},
    ytick={1e9,1e6,1e3, 1e0, 1e-3, 1e-6, 1e-9, 1e-12,1e-15},
    max space between ticks=25,
    legend pos= north east,
    legend style={at={(1.05,1.05)},anchor=north east},
  ]
  \addplot+[thick, mark=triangle, mark repeat=10] table[x expr=\coordindex,y index=0]{maxsvd_H.dat};
  \addplot+[solid, black, thick, mark=o, mark repeat=10] table[x index=0, y index=3] {local_global_errors_H.dat};
  \addplot+[solid, red, thick, mark=x, mark repeat=10] table[x index=0, y index=8] {local_global_errors_H.dat};
  \legend{svd,local,global}
\end{semilogyaxis}
\end{tikzpicture}
\caption{\footnotesize
The slowest SVD decay (svd), the maximum relative local error (local), and the relative global error (global) for the two GFEM examples ({\it Example 4}).
Median values over 100 realizations.
}
\label{fig:qualitative gfem}
\end{minipage}
\vspace{-15pt}
\end{figure}

We construct reduced spaces $R^{n}_i$, each approximating the solution $u_{gl}$ on $\omega_i$.
To this end, we introduce $\omega_{i}^{*}$, satisfying $\omega_{i}\subsetneq \omega_{i}^{*} \subset \Omega_{gl}$ with $\dist(\partial \omega_{i}^{*}\setminus \partial \Omega_{gl},\partial \omega_{i}) \geq \rho >0$, which denotes the oversampling domain used to construct the reduced space and thus corresponds to $\Omega$ in the remainder of this article. We denote its inner boundary $\partial \omega_i^* \setminus \partial \Omega_{gl}$ by $\Gamma_{out,i}$.
Denoting the space of $A$-harmonic functions on $\omega_i^*$ as $\tilde{H}_i$,
the transfer operator is defined as
$
T(w|_{\Gamma_{out,i}}) 
:= 
(w - P_{\ker(\mathcal{A}),\omega_i}(w))|_{\omega_i}$ 
for all $w \in \tilde{H}_i.
$
The spaces $S_i := \{ w|_{\Gamma_{out,i}} \ : \ w \in \tilde{H}_i\}$
are equipped with the $L^2$-inner product. 
In the range spaces
$R_i := 
\{ (w - P_{\ker(\mathcal{A}), \omega_i})|_{\omega_i} \, : \, w \in \tilde{H}_i\}
$
we use the energy inner product.
We apply Algorithm \ref{algo:adaptive_range_approximation} to these transfer operators using $n_t = 20$ test vectors throughout this subsection and a global maximum failure probability
of $\varepsilon_\mathrm{fail} = 10^{-15}$.
For the full GFEM algorithm with all details, see the supplementary materials subsection SM5.1.

\label{sec:gfem examples}
We demonstrate the GFEM with randomized basis generation on two examples.
The first example ({\it Example 4.1} in the following) is the
Poisson problem $- \Delta u = 1$, i.e.~$k\equiv 1$ and $\widehat f\equiv 1$. We consider this problem because the singular values and singular vectors of all transfer operators associated with subdomains $\omega_{i}^{*}$ that do not lie on the boundary of $\Omega_{gl}$ are the same and thus only the boundary has a (slight) influence. Therefore, we would expect that for this test case the convergence rate of the global error is similar to the convergence rate of $\|T-P_{R^{n}}T\|$. The second example ({\it Example 4.2} in the following) is more
complex and features small details, high contrast, and
high conductivity channels. In particular the solution of this example is non-smooth. For {\it Example 4.2}, we define a high
conductivity region
$\Omega_{hc} :=
\left[(0.02,0.1) \times (0.02, 0.98) \right] \cup
\left[(0.9, 0.98) \times (0.02, 0.98) \right] \cup
\left[(0.11, 0.89) \times (0.475, 0.485) \right] \cup
\left[(0.1, 0.9) \times(0.495, 0.505) \right] \cup
\left[(0.11, 0.89) \times (0.515, 0.525) \right]
$
and define
$k(x) := 10^5$ for $x \in \Omega_{hc}$ and $k(x) := 1$ else.
For the right hand side, we define a heating region $\Omega_{heat} := (0.9, 0.98) \times (0.02, 0.98)$
and a cooling region
$\Omega_{cool} := (0.02, 0.1) \times (0.02, 0.98)$
and define the right hand side as
$\widehat f(x) := 1$ for $x \in \Omega_{heat}$,
$\widehat f(x) :=-1$ for $x \in \Omega_{cool}$, and
$\widehat f(x) :=0 $ else.
For both examples, the domain is discretized using a regular mesh where
the domain is partitioned into $200\times 200$ squares of size $0.005^2$, 
each of which is divided into four triangles.
On this mesh, standard $P1$ basis functions are used,
spanning the FEM space. It has 80401 degrees of freedom,
of which 800 are constrained due to the Dirichlet boundary.
As local domains $\omega_i$ we use patches of size $0.2 \times 0.2$
with an overlap of size $0.1$. This accounts for
$9 \times 9 = 81$ subdomains $\omega_i$.
For the oversampling size we also use $0.1$, so the domains
$\omega_i^*$ in the interior have size $0.4\times 0.4$
while the domains $\omega_i^*$ at one boundary have size
$0.4\times 0.3$ or $0.3 \times 0.4$ and the domains
$\omega_i^*$ in the corners have size $0.3\times 0.3$.
The dimension of the source spaces $N_{S_i}$ and $N_{R_i}$
differ for domains in the interior, at the boundary
and in the corners. For domains in the interior,
it holds $N_{S_i} = 320$ and $N_{R_i} = 1681$.
\label{sec:gfem numerical results}
\begin{figure}
\centering
\begin{tikzpicture}
\begin{loglogaxis}[
    title={\it Example 4.1},
    x dir=reverse,
    width=0.38\textwidth,
    height=4.5cm,
    xmin=1e-11,
    xmax=1e5,
    ymin=1e-15,
    ymax=1e1,
    xlabel=$\varepsilon$,
    ylabel=$\min_{v_i \in R^n_i}\frac{\| k^{1/2} \nabla (u_{gl} - v_{i})\|_{L^{2}(\omega_{i})}}{\| k^{1/2}\nabla u_{gl} \|_{L^{2}(\omega_{i}^{*})}}$,
    grid=both,
    grid style={line width=.1pt, draw=gray!20},
    major grid style={line width=.2pt,draw=gray!50},
    xtick={1e-10, 1e-5, 1e0, 1e5},
    minor xtick={1e-10, 1e-9, 1e-8, 1e-7, 1e-6, 1e-5, 1e-4, 1e-3, 1e-2, 1e-1, 1e0, 1e1, 1e2, 1e3, 1e4},
    minor ytick={1e-15, 1e-14, 1e-13, 1e-12, 1e-11, 1e-10, 1e-9, 1e-8, 1e-7, 1e-6, 1e-5, 1e-4, 1e-3, 1e-2, 1e-1, 1e0, 1e1, 1e2, 1e3},
    max space between ticks=25,
  ]
  \addplot+[densely dotted, blue, thick,  mark=none] table[x index=0, y index=5] {local_approximations_poisson.dat};
  \addplot+[densely dashed, blue, thick,  mark=none] table[x index=0, y index=4] {local_approximations_poisson.dat};
  \addplot+[solid,          black, thick, mark=none] table[x index=0, y index=3] {local_approximations_poisson.dat};
  \addplot+[densely dashed, brown, thick, mark=none] table[x index=0, y index=2] {local_approximations_poisson.dat};
  \addplot+[densely dotted, brown, thick, mark=none] table[x index=0, y index=1] {local_approximations_poisson.dat};
\end{loglogaxis}
\end{tikzpicture}
\begin{tikzpicture}
\begin{loglogaxis}[
    title={\it Example 4.2},
    x dir=reverse,
    width=0.38\textwidth,
    height=4.5cm,
    xmin=1e-11,
    xmax=1e5,
    ymin=1e-15,
    ymax=1e1,
    legend pos=outer north east,
    xlabel=$\varepsilon$,
    yticklabels={,,},
    grid=both,
    grid style={line width=.1pt, draw=gray!20},
    major grid style={line width=.2pt,draw=gray!50},
    xtick={1e-10, 1e-5, 1e0, 1e5},
    minor xtick={1e-10, 1e-9, 1e-8, 1e-7, 1e-6, 1e-5, 1e-4, 1e-3, 1e-2, 1e-1, 1e0, 1e1, 1e2, 1e3, 1e4},
    minor ytick={1e-15, 1e-14, 1e-13, 1e-12, 1e-11, 1e-10, 1e-9, 1e-8, 1e-7, 1e-6, 1e-5, 1e-4, 1e-3, 1e-2, 1e-1, 1e0, 1e1, 1e2, 1e3},
    max space between ticks=25,
  ]
  \addplot+[densely dotted, blue, thick,  mark=none] table[x index=0, y index=5] {local_approximations_h.dat};
  \addplot+[densely dashed, blue, thick,  mark=none] table[x index=0, y index=4] {local_approximations_h.dat};
  \addplot+[solid,          black, thick, mark=none] table[x index=0, y index=3] {local_approximations_h.dat};
  \addplot+[densely dashed, brown, thick, mark=none] table[x index=0, y index=2] {local_approximations_h.dat};
  \addplot+[densely dotted, brown, thick, mark=none] table[x index=0, y index=1] {local_approximations_h.dat};
  \legend{max, 75 percentile, 50 percentile, 25 percentile, min};
\end{loglogaxis}
\end{tikzpicture}
\caption{\footnotesize Relative local error
$\min_{v_i \in R^n_i}\| k^{1/2} \nabla (u_{gl} - v_{i})\|_{L^{2}(\omega_{i})} / \| k^{1/2}\nabla u_{gl} \|_{L^{2}(\omega_{i}^{*})}$
versus target local error $\varepsilon$ for {\it Example 4}.
Reduced spaces $R^n_i$ generated with adaptive algorithm \ref{algo:adaptive_range_approximation}.
Statistics over 1000 samples and over all 81 local spaces.
}
\label{fig:gfem local approximation}
\vspace{-15pt}
\end{figure}
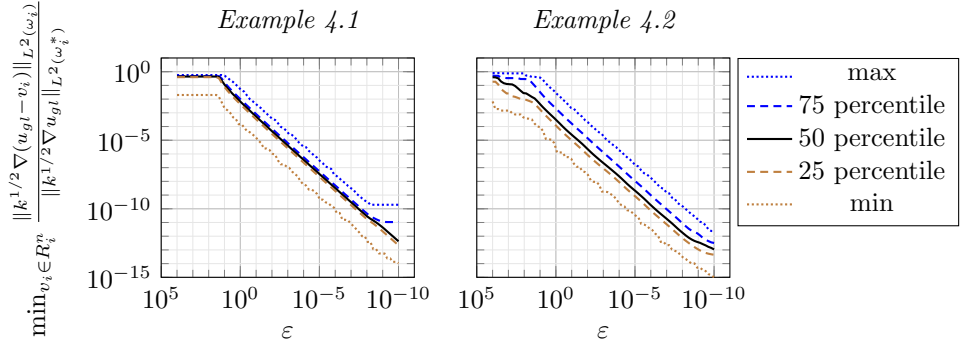

All 81 transfer operators in the two examples have an exponentially decaying spectrum.
The slowest decaying spectrum is shown in \cref{fig:qualitative gfem}
along with the maximum local relative approximation error and the 
global relative approximation error.
The maximum relative local approximation error follows the
spectrum of the transfer operator closely for {\it Example 4.1}. For {\it Example 4.2}, 
the behavior is similar.
As anticipated, this error decay propagates to the relative global approximation error, which
flattens out due to numerical effects at about $10^{-13}$ for {\it Example 4.1}
and $10^{-10}$ for {\it Example 4.2}.
To compute the spectrum of the transfer operators, the
numerically more accurate eigenvalue problem presented
in the supplementary materials section SM6 is used.
To further examine the behavior of the local error
$\min_{v_i \in R^{n}_i}\| k^{1/2} \nabla (u_{gl} - v_{i})\|_{L^{2}(\omega_{i})} / \| k^{1/2}\nabla u_{gl} \|_{L^{2}(\omega_{i}^{*})}$
when using the adaptive algorithm, we construct local approximation spaces $R^{n}_i$ with the proposed adaptive range recovery and measure the relative local error. Statistics over 1,000 different realizations and all 81 local spaces show
that results are more accurate than required by about 2.5 orders of magnitude for {\it Example 4.1} and about 3.5 orders of magnitude for {\it Example 4.2},
see \cref{fig:gfem local approximation}.
This discrepancy is in part caused by the fact that the adaptive range approximation generates spaces which are better than required,
as was already discussed in \cref{subsect:analytic,subsect:linearelasticity}. This accounts for about 1 to 1.5 orders of
magnitude. The other part is the pessimistic estimate for the local error given in the supplementary materials, Lemma SM5.2.
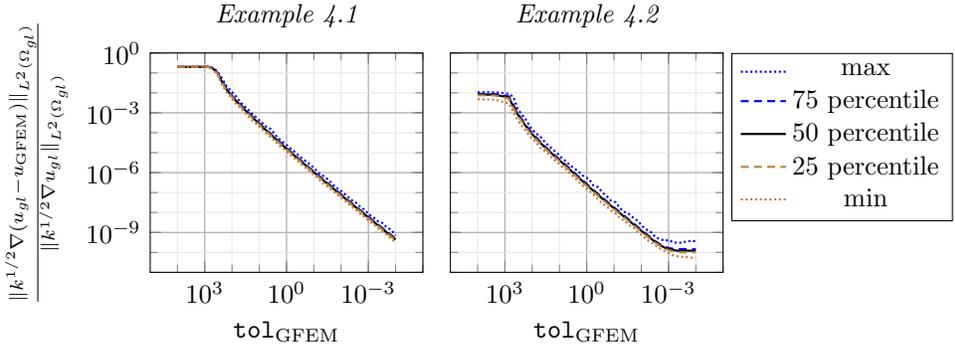
\begin{figure}
\centering
\begin{tikzpicture}
\begin{loglogaxis}[
    title={\it Example 4.1},
    x dir=reverse,
    width=0.4\textwidth,
    height=4.5cm,
    xmin=1e-5,
    xmax=1e5,
    ymin=1e-11,
    ymax=1e0,
    legend pos=outer north east,
    xlabel=$\algotol_\mathrm{GFEM}$,
    grid=both,
    grid style={line width=.1pt, draw=gray!20},
    major grid style={line width=.2pt,draw=gray!70},
    xtick={1e-12,1e-9,1e-6, 1e-3, 1e0, 1e3,1e6},
    ytick={1e-12,1e-9,1e-6, 1e-3, 1e0, 1e3,1e6},
    minor xtick={1e-10, 1e-9, 1e-8, 1e-7, 1e-6, 1e-5, 1e-4, 1e-3, 1e-2, 1e-1, 1e0, 1e1, 1e2, 1e3, 1e4},
    minor ytick={1e-15, 1e-14, 1e-13, 1e-12, 1e-11, 1e-10, 1e-9, 1e-8, 1e-7, 1e-6, 1e-5, 1e-4, 1e-3, 1e-2, 1e-1, 1e0, 1e1, 1e2, 1e3},
    max space between ticks=25,
    ylabel=$\frac{\| k^{1/2} \nabla (u_{gl} - u_\mathrm{GFEM})\|_{L^{2}(\Omega_{gl})}}{\|k^{1/2} \nabla u_{gl} \|_{L^{2}(\Omega_{gl})}}$,
  ]
  \addplot+[densely dotted, blue, thick,  mark=none] table[x index=0, y index=5] {global_approximations_poisson.dat};
  \addplot+[densely dashed, blue, thick,  mark=none] table[x index=0, y index=4] {global_approximations_poisson.dat};
  \addplot+[solid,          black, thick, mark=none] table[x index=0, y index=3] {global_approximations_poisson.dat};
  \addplot+[densely dashed, brown, thick, mark=none] table[x index=0, y index=2] {global_approximations_poisson.dat};
  \addplot+[densely dotted, brown, thick, mark=none] table[x index=0, y index=1] {global_approximations_poisson.dat};
\end{loglogaxis}
\end{tikzpicture}
\begin{tikzpicture}
\begin{loglogaxis}[
    title={\it Example 4.2},
    x dir=reverse,
    width=0.4\textwidth,
    height=4.5cm,
    xmin=1e-5,
    xmax=1e5,
    ymin=1e-11,
    ymax=1e0,
    legend pos=outer north east,
    xlabel=$\algotol_\mathrm{GFEM}$,
    yticklabels={,,},
    grid=both,
    grid style={line width=.1pt, draw=gray!20},
    major grid style={line width=.2pt,draw=gray!70},
    xtick={1e-12,1e-9,1e-6, 1e-3, 1e0, 1e3,1e6},
    ytick={1e-12,1e-9,1e-6, 1e-3, 1e0, 1e3,1e6},
    minor xtick={1e-10, 1e-9, 1e-8, 1e-7, 1e-6, 1e-5, 1e-4, 1e-3, 1e-2, 1e-1, 1e0, 1e1, 1e2, 1e3, 1e4},
    minor ytick={1e-15, 1e-14, 1e-13, 1e-12, 1e-11, 1e-10, 1e-9, 1e-8, 1e-7, 1e-6, 1e-5, 1e-4, 1e-3, 1e-2, 1e-1, 1e0, 1e1, 1e2, 1e3},
    max space between ticks=25,
  ]
  \addplot+[densely dotted, blue, thick,  mark=none] table[x index=0, y index=5] {global_approximations_h.dat};
  \addplot+[densely dashed, blue, thick,  mark=none] table[x index=0, y index=4] {global_approximations_h.dat};
  \addplot+[solid,          black, thick, mark=none] table[x index=0, y index=3] {global_approximations_h.dat};
  \addplot+[densely dashed, brown, thick, mark=none] table[x index=0, y index=2] {global_approximations_h.dat};
  \addplot+[densely dotted, brown, thick, mark=none] table[x index=0, y index=1] {global_approximations_h.dat};
  \legend{max, 75 percentile, 50 percentile, 25 percentile, min};
\end{loglogaxis}
\end{tikzpicture}
\caption{\footnotesize Relative global error
$\| k^{1/2} \nabla (u_{gl} - u_\mathrm{GFEM})\|_{L^{2}(\Omega_{gl})} / \|k^{1/2} \nabla u_{gl} \|_{L^{2}(\Omega_{gl})}$
versus target global error $\algotol_\mathrm{GFEM}$ for {\it Example 4}.
GFEM solution $u_\mathrm{GFEM}$ generated with Algorithm 1 in the supplementary materials,
reduced spaces $R^{n}_i$ generated with adaptive Algorithm \ref{algo:adaptive_range_approximation}.
Statistics over 1,000 samples.
}
\label{fig:global_approximation}
\vspace{-15pt}
\end{figure}
The error decay propagates to the global relative error
$\big(\| k^{1/2} \nabla (u_{gl} - u_\mathrm{GFEM})\|_{L^{2}(\Omega_{gl})}\big) \big/ \big(\|k^{1/2} \nabla u_{gl} \|_{L^{2}(\Omega_{gl})}\big).$
It is possible to choose a target error
$\algotol_\mathrm{GFEM}$ 
and choose all tolerances accordingly, so the resulting approximation will have at most
this relative error:
From $\algotol_\mathrm{GFEM}$ as a target maximum for the global relative error,
we calculate the maximum local relative error using Proposition SM5.1.
Using this maximum local relative error, we calculate a limit for the operator norm
$\norm{T_i - P_{R_i^n}T_i}$ using Lemma SM5.2.
This limit for the operator norm is then used to steer the adaptive range finder algorithm,
Algorithm \ref{algo:adaptive_range_approximation}.
 There are no unknown constants. For details,
see the supplementary materials subsection SM5.2.
The global relative error, shown in \cref{fig:global_approximation}, confirms this and is more accurate than required by about 4.5 orders of magnitude for {\it Example 4.1} and about 6.5 orders of magnitude for {\it Example 4.2}. From the local error to the global error, we loose about 2
orders of magnitude for {\it Example 4.1} and about 3 orders of magnitude for {\it Example 4.2}. This is due to the pessimistic estimate
in Proposition SM5.1.

\section{Conclusions}\label{sect:conclusions}

Recently, optimal local reduced spaces for localized MOR procedures have been proposed in \cite{BabLip11,SmePat16}. However, a straightforward FE approximation of those optimal local spaces is very expensive. In this article we have proposed an adaptive randomized range finder algorithm based on methods from randomized LA \cite{halko2011finding} that adaptively builds local reduced spaces for localized MOR procedures from local solutions of the PDE with random boundary conditions. Starting from results in randomized LA \cite{halko2011finding,Gor85,Gor88,CheDon05} we have shown that the randomly generated local reduced spaces produce an approximation with a convergence rate that is only slightly worse than the optimal rate; the rate is deteriorated by about the square root of the basis size. Finally, the adaptive randomized range finder algorithm is steered by a probabilistic a posteriori error estimator for which we have demonstrated its efficiency. 

The numerical experiments show that the local spaces constructed by the adaptive randomized range finder algorithm indeed converge with a nearly optimal rate. It can also be seen that the a priori error bound seems to be sharp in the sense that for some numerical experiments the projection error converges exactly as predicted by the a priori error bound.
Moreover, we observed that after a preasymptotic regime the convergence behavior of the projection error is independent of the mesh size, indicating that it might be possible to generalize the results of the present paper to the continuous setting. This is the subject of future work. For the GFEM we exemplarily demonstrated in the numerical experiments that the excellent local approximation behavior of the randomly generated spaces carries over to the global level for an example with high conductivity channels. Regarding the probabilistic a posteriori error estimator the numerical experiments have showed also for a transfer operator with slowly decaying singular values and discrete source and range spaces of rather large dimensions that we only need $10$ to $20$ test vectors in order to obtain a sharp bound. Thanks to both the nearly optimal approximation capacities of the randomly generated local reduced spaces and the good effectivity of the probabilistic a posteriori error estimator also for few numbers of test vectors the adaptive randomized range finder algorithm requires only very few local solutions of the PDE in addition and therefore has a close to optimal computational complexity
and is faster than the calculation of the optimal spaces using ARPACK on the corresponding eigenvalue problem.

The extension of the proposed method to transient and nonlinear problems is the subject of future work. We note that although prescribing random boundary conditions and using the solution of the nonlinear PDE evaluated either on an interface or a subdomain might actually yield a reduced space with decent approximation properties, we believe that the corresponding numerical analysis might however be quite involved.

\subsection*{Acknowledgments} We would like to thank Dr. Jonas Ballani of Akselos for the fruitful discussion on randomized linear algebra at the MoRePaS workshop 2015 in Trieste. Moreover, we are grateful to Dr.~Clemens Pechstein from CST AG for pointing us to the numerically more accurate eigenvalue problem shown the supplementary materials section SM6.

\bibliographystyle{siamplain}
\bibliography{paper.bib}
\end{document}